\documentclass[a4paper,reqno]{amsart}
\usepackage{amsmath}
\usepackage{amsfonts}
\usepackage{amssymb}
\usepackage{hyperref}
\usepackage{comment}
\usepackage{color}
\usepackage{graphicx}

\newtheorem{theorem}{Theorem}[section]
\newtheorem{lemma}[theorem]{Lemma}

\newtheorem{remark}[theorem]{Remark}

\newcommand{\N}{\mathbb{N}}
\newcommand{\Z}{\mathbb{Z}}
\newcommand{\R}{\mathbb{R}}
\newcommand{\E}{\mathbb{E}}

\newcommand{\D}{\mathcal{D}}
\newcommand{\Q}{\mathcal{Q}}

\newcommand{\PP}{\mathbb{P}}

\newcommand{\iii}{\mathtt{i}}

\newcommand{\norm}[1]{\left\|#1\right\|}

\DeclareMathOperator{\spt}{spt}

\title{Fourier dimension of Mandelbrot cascades on planar curves}
\author{Donggeun Ryou}
\address{DR: Department of Mathematics, Indiana University, Bloomington, IN, USA.}
\email{dryou@iu.edu}
\author{Ville Suomala}
\address{VS: Mathematical Sciences, P.O.Box 8000, FI-90014, University of
Oulu, Finland}
\email{ville.suomala@oulu.fi}
\date{\today}

\thanks{VS was supported by the Research Council of Finland via the project “\emph{Fractals and randomness}”, grant no. 368817. We thank Tuomo Kuusi for pointing our attention to the concentration bounds in \cite{AK} and Tuomas Sahlsten for useful discussions. We are grateful to an anonymous referee for various useful comments.} 

\begin{document}

\begin{abstract}
We consider multifractal Mandelbrot cascades supported on planar $C^2$ curves with nonvanishing curvature and show that their Fourier dimension is as large as possible, i.e., equal to the infimum of the lower pointwise dimension of the measure. 
\end{abstract}

\maketitle

\section{Introduction}

For a finite measure $\eta$ on a Euclidean space 
$\R^d$, we define the Fourier transform of $\eta$ at $\xi\in \R^d$ as
\[
\widehat{\eta}(\xi)=\int_{\R^d} e^{-2\pi i x \cdot\xi} d\eta(x)\,.
\]

While the study of Rajchman measures, i.e. those whose Fourier transform vanishes at infinity, dates back more than a century (see \cite{Lyons1995}), the problem of quantifying the decay rate of the Fourier transform for random measures was initiated in the early 1970s by Mandelbrot \cite{Mandelbrot1974,Mandelbrot1976}. This line of research was further brought up by Kahane \cite{Kahane1993}, who revisited several of the Mandelbrot problems a few decades later.

Although Kaufman’s work in the early 1980s \cite{Kaufman1981} established the existence of random measures with an arbitrary prescribed Fourier dimension, it took almost 50 years before the first nontrivial results appeared for the models that Mandelbrot was interested in.
Before presenting these results, we recall that the \emph{Fourier dimension} of a measure $\eta$, denoted by $\dim_F \eta$, is defined as
\[
\dim_F \eta =\sup\left\{0<s< d\,:\,|\widehat{\eta}(\xi)|=O(|\xi|^{-s/2})\right\}\,.
\]

In \cite{FalconerJin2019}, Falconer and Jin provided the first quantitative lower bounds for the Fourier dimension of Gaussian multiplicative chaos (GMC) measures. Their result concerns the GMC defined on planar domains, and the method relies on the theory of orthogonal projections. The corresponding problem on the real line was addressed by Garban and Vargas \cite{GarbanVargas}, who obtained a lower bound for the Fourier dimension of the GMC on $[0,1]$.
For the related dyadic model of Mandelbrot cascades, the exact value of the Fourier dimension was determined independently by Chen, Han, Qiu, and Wang \cite{CHQW}, and by Chen, Li, and Suomala \cite{CLS}. Subsequently, the Fourier dimension of more general random measures, including GMC defined on the torus $\mathbb{T}^d \subset \mathbb{R}^d$, was established in \cite{LinQiuTan2024,LinQiuTan2025}. 

All the results discussed above concern random measures defined on domains in $\mathbb{R}^d$. In particular, for cascade measures $\nu$ on $[0,1]^d$, the results of \cite{CLS,CHQW} show that the Fourier dimension is, almost surely, given by 
\begin{equation}\label{eq:F_equals_C}
    \dim_F\nu=\min\{2,\dim_2\nu\}\,,
\end{equation} 
where
$\dim_2 \eta$ denotes the correlation dimension of a measure $\eta$, and whose value for the cascade measures has a well-known explicit formula. Recalling the general inequality $\dim_F \eta \le \dim_2 \eta$, valid for all finite Borel measures $\eta$, we thus note that cascade measures on $[0,1]^d$ are quasi-Salem, in the sense that their Fourier dimension is as large as permitted by this inequality.

In this paper, we extend the study of Mandelbrot multiplicative cascades to cascade measures supported on planar curves with nonvanishing curvature. This problem was proposed in \cite[p.~7]{CHQW} in the language of oscillatory integrals, and was also discussed in \cite{CLS}, where a partial and non-optimal result was obtained.
Our main result shows that, for such cascade measures, the Fourier dimension is almost surely equal to the minimum of the multifractal spectrum. More precisely, let $(W_i)_{1\le i\le b}$ be non-negative random variables with unit expectation and superpolynomial tails. Let $\Gamma \subset \mathbb{R}^2$ be a compact $C^2$ curve with nonvanishing curvature, and let $\mu$ be the Mandelbrot cascade constructed from $(W_i)_{i=1}^b$ via a $b$-adic decomposition of $\Gamma$. Denote
$\alpha_{\min}=\inf\{\dim(\mu,x)\,:\,x\in\spt\mu\}$, where $\dim(\eta,x)$
is the pointwise Hausdorff dimension of a measure $\eta$ at $x$. Deferring the detailed definitions and our technical assumptions to Section \ref{sec:prel_model}, we now state the main theorem.
\begin{theorem}\label{thm:intro}
   $\dim_F\mu=\alpha_{\min}$ almost surely on non-extinction.  
\end{theorem}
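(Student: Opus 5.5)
The upper bound $\dim_F\mu\le\alpha_{\min}$ should be the soft direction. For any finite measure $\eta$ and any $x\in\spt\eta$ one has $\eta(B(x,r))\lesssim r^{s}$ whenever $|\widehat\eta(\xi)|=O(|\xi|^{-s/2})$ with $s<1$. To see this, integrate $\widehat\eta$ against a Schwartz bump adapted to $B(x,r)$: this gives $\eta(B(x,r))\lesssim r^2\int_{|\xi|\lesssim r^{-1}}|\widehat\eta(\xi)|\,d\xi$ plus tails. For measures on curves this crude bound is not sharp, so I would instead test against a function that is a bump of width $r$ across the curve and length $r$ along it. The curvature makes this essentially a ball, and the localized estimate follows. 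Hence $\dim_F\mu\le \underline{\dim}(\mu,x)$ for every $x$, so $\dim_F\mu\le\alpha_{\min}$. Since $\alpha_{\min}\le 1$ always, we never reach the cap $\min\{1,\cdot\}$.

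The main work is the lower bound $\dim_F\mu\ge\alpha_{\min}-\varepsilon$ almost surely. Write $\mu=\lim_n\mu_n$, where $\mu_n$ is the $n$-th level cascade: each $b$-adic arc $I$ of $\Gamma$ of generation $n$ carries the density $\prod_{k\le n}W_{I|k}$ times normalized arclength. Fix $\xi$ with $|\xi|\sim b^{n}$ for a suitable scale $n$ (possibly $n$ a multiple of $\log_b|\xi|$). Decompose
\[
\widehat\mu(\xi)=\widehat{\mu_m}(\xi)+\sum_{k\ge m}\bigl(\widehat{\mu_{k+1}}(\xi)-\widehat{\mu_k}(\xi)\bigr).
\]
The deterministic-ish term $\widehat{\mu_m}(\xi)$ is a sum over arcs of level $m$ of weights times oscillatory integrals over arcs of length $b^{-m}$. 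By nonvanishing curvature and van der Corput, each such integral is $O(|\xi|^{-1/2})$ when $\xi$ is normal to the arc. Otherwise a nonstationary-phase bound $O((|\xi|b^{-m}\theta)^{-1})$ applies, where $\theta$ measures the transversality. Only $O(|\xi|^{1/2}b^{-m}\cdot\text{const})$ arcs have near-normal direction; more precisely, these are the arcs within angle $|\xi|^{-1/2}$ of the stationary point. Their total mass is bounded by $\mu$ of a ball of radius $|\xi|^{-1/2}$, which is $\lesssim |\xi|^{-\alpha_{\min}/2+\varepsilon}$ almost surely. This last bound comes from uniform local dimension estimates derived via moments of $W$ (superpolynomial tails give all moments). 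This yields the stationary contribution $|\xi|^{-1/2}\cdot|\xi|^{1/2}\cdot|\xi|^{-\alpha_{\min}/2}$ roughly; the heuristic is that the stationary-phase part behaves like $\mu(B(x_\xi,|\xi|^{-1/2}))$, precisely giving exponent $\alpha_{\min}/2$.

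The martingale differences are sums of independent centered terms $(W_{Ii}-1)\times(\text{weights})\times(\text{oscillatory integrals at level }k+1)$, conditioned on $\mathcal F_k$. I would apply a Hoeffding/Bernstein-type concentration bound for sums with superpolynomial tails, with truncation, in the form of the bounds from \cite{AK} acknowledged in the paper. This gives, with probability $1-O(|\xi|^{-M})$, a bound of order $\bigl(\sum_{|I|=k+1}\mu_k(I)^2|\text{osc}_I(\xi)|^2\bigr)^{1/2}$ times logs. Splitting the arcs by distance to the stationary point again, this sum is controlled by $L^2$-type mass estimates, namely by $\max_I\mu(I)$ times the local mass near stationary points. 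These are summable in $k$ with the right exponent. A union bound over a $|\xi|^{-C}$-net of frequencies up to $|\xi|\le b^{N}$, together with Lipschitz continuity of $\widehat\mu$ and Borel--Cantelli in $N$, upgrades the estimate to all $\xi$ almost surely.

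The main obstacle is handling the stationary-phase region uniformly in the direction of $\xi$. The relevant quantity is the random mass $\mu(B(x,|\xi|^{-1/2}))$ at the stationary point, and for the exponent we need $\sup_x$ of it, not the average. So the delicate step is proving the uniform bound $\sup_x\mu(B(x,r))\le r^{\alpha_{\min}-\varepsilon}$ for small $r$ almost surely. Here $\alpha_{\min}$ is identified via the $L^q$ spectrum as $q\to\infty$, using high moments and a union bound over $b$-adic arcs. The same bound must also be fed into the conditional variance terms, where the crossover scale $b^{-k}\approx|\xi|^{-1/2}$ must be balanced carefully.
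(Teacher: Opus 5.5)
\textbf{The gap: the upper bound.} Your upper bound $\dim_F\mu\le\alpha_{\min}$, which you call the soft direction, does not work as written. Your opening claim is false for general measures. On $\R$, the measure $d\eta=|y|^{s/2-1}\mathbf{1}_{[-1,1]}(y)\,dy$ has $|\widehat\eta(\xi)|\sim|\xi|^{-s/2}$, yet $\eta(B(0,r))\sim r^{s/2}$. In $\R^2$ your isotropic bump computation only gives $\eta(B(x,r))\lesssim r^2\cdot r^{-2+s/2}=r^{s/2}$, hence only $\dim_F\mu\le 2\alpha_{\min}$.

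Your proposed fix, a bump of width $r$ across the curve and length $r$ along it, is the same isotropic bump. It returns $r^{s/2}$ again, and the geometry of $\Gamma$ never enters. The missing idea is parabolic scaling. Because $\gamma\in C^2$, the arc $\Gamma\cap B(x,\rho)$ lies in a rectangle of length $\sim\rho$ along the tangent and width $\sim\rho^2$ across it. Test against a bump adapted to that rectangle. Its Fourier transform lives on the dual rectangle of size $\rho^{-1}\times\rho^{-2}$, most of which has $|\xi|\sim\rho^{-2}$. This gives $\eta(B(x,\rho))\lesssim\rho^{3}\cdot\rho^{-3+s}=\rho^{s}$.

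Equivalently, project onto the normal $\theta$ at $x$. Then $\widehat{\eta_\theta}(t)=\widehat\eta(t\theta)$, and the crude one-dimensional bound gives $\eta_\theta(B(P_\theta x,C\rho^2))\lesssim\rho^{s}$. This interval contains $P_\theta(\Gamma\cap B(x,\rho))$, so $\eta(B(x,\rho))\lesssim\rho^s$. That is exactly Lemma~\ref{lem:FD_ub}, which the paper phrases as $\dim_F\eta\le\dim_F\eta_\theta\le\dim_2\eta_\theta\le\dim(\eta,x)$.

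\textbf{The lower bound.} This part is essentially the paper's argument. You use the deterministic van der Corput estimate for $\widehat{\mu_m}(\xi)$ at the level $b^{2m}\approx|\xi|$ (Lemma~\ref{lem:large_f}; this is the balancing you mention at the end). You apply the concentration inequality of Lemma~\ref{lem:conc} to the martingale differences, with van der Corput weights when $b^n\le|\xi|\le b^{2n}$ and trivial weights when $|\xi|\le b^n$. And you use an almost sure bound of the form $\sup_{I\in\D_j}\mu_n(I)\le b^{-j\alpha_{\min}+o(n)}$. Three small corrections:
\begin{itemize}
\item That sup bound is needed for the approximants $\mu_n$ uniformly in $j\le n$, not only for the limit $\mu$. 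It follows from the same moment argument (Lemma~\ref{lem:unif_pq} with $p=\infty$, $q=1$).
\item With merely superpolynomial tails, the concentration step loses a factor $b^{2n\delta}$ rather than logarithms. This is harmless for the dimension.
\item Borel--Cantelli should be run over the level $n$, with a net in $\{|\xi|\le b^{2n}\}$, because a fixed frequency sees infinitely many levels.
\end{itemize}
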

We note that $\alpha_{\min}$ has a well-known analytic expression in terms of the random variables $(W_i)$.
We also extend the result by establishing bounds on the decay rate of the  spherical $L^p$ averages,
\begin{equation}\label{def:sp}
\sigma_p(\eta)(r) =  \left(\int_{S^1} |\widehat{\eta}(r\theta )|^p d\sigma(\theta) \right)^{1/p}\,,
\end{equation}
for all $1\le p\le\infty$, where $\sigma$ denotes the surface measure on the unit circle. We refer to Theorem~\ref{thm:circ_decay} for the precise statement and note that Theorem \ref{thm:intro} corresponds to the case $p=\infty$, interpreting the definition \eqref{def:sp} as the $L^\infty$ norm if $p=\infty$.

In addition, as a byproduct of the proof of Theorem~\ref{thm:intro}, we obtain a proof of \eqref{eq:F_equals_C}, assuming that 
$(W_i)_i$ satisfies the same hypotheses as in Theorem~\ref{thm:intro}. Although this result is contained in \cite{LinQiuTan2025}, our approach yields a relatively simple proof under the fairly general moment conditions on the random weights. In particular, it applies to lognormal cascades, which constitute the most relevant examples from the viewpoint of applications.

The paper is organized as follows. In Section~\ref{sec:prel_model}, we introduce the model and recall the necessary multifractal tools. In Section~\ref{sec:aux}, we recall the formula for the correlation dimension of the cascades and provide a technical lemma describing the behaviour of certain moment sums of the cascade across different scales. 
Section~\ref{sec:conc} contains our main probabilistic ingredient: a concentration inequality adapted from \cite{AK}, which enables us to improve the methods developed in \cite{CLS, Ryou}. This strategy is implemented in Section~\ref{sec:Other Lemmas}, where we establish the lower bound 
$\dim_F\mu\ge\alpha_{\min}$. The matching upper bound $\dim_F\mu\le\alpha_{\min}$ follows as a corollary of a universal estimate that holds for any finite Borel measure supported on a $C^2$ curve with nonvanishing curvature. The details are provided in Section \ref{sec:ub}. Finally, our result on the decay of 
$\sigma_p(\mu)(r)$ is presented in Section~\ref{sec:spherical}, and our new proof of \eqref{eq:F_equals_C} is found in the Appendix.

\section{Preliminaries}

\subsection{Notation for cascade measures}\label{sec:prel_model}
In relation to the Fourier dimension, we define the correlation dimension, $\dim_2\eta$, of a measure $\eta$. This may be done by partitioning the space into cubical objects of a certain size, computing the $L^2$-sum of their masses, and passing to the limit after a suitable normalisation. For a measure $\eta$ on $\R^d$, we let
\[
\dim_2 \eta =\liminf_{n\rightarrow \infty} \frac{\log \sum_{Q\in \Q_n} \eta(Q)^{2}}{-n}\,,
\]
where $\Q_n$ is the collection of $b$-adic subsquares of side-length $b^{-n}$ and the $\log$ is to base $b$. Here and in what follows, $b\ge 2$ is a fixed integer. It is well known that
$\dim_F\eta\le\dim_2\eta\le\dim_H\eta$ holds for all compactly supported finite measures, where 
$\dim_H\eta$ is the Hausdorff dimension of $\eta$ defined as the supremum of the values $s$ for which $\dim(\eta,x)\ge s$ holds for $\eta$-almost every $x$, and 
\begin{equation*}
    \dim(\eta,x)=\liminf_{ r \rightarrow 0} \frac{\log(\eta(B(x,r))}{\log r}
\end{equation*}
is the pointwise Hausdorff dimension of $\eta$ at $x$.
We first define Mandelbrot multiplicative cascades on $[0,1]^d$ with respect to the base $b$. To that end, let $\Lambda=\{0,\ldots,b-1\}^d$, and given $\iii=(i_1,\ldots,i_n)\in\Lambda^n$, let $x_\iii\in[0,1]^d$ such that $(x_\iii)_j=\sum_{k=1}^n (i_k)_j b^{-k}$ for all $1\le j\le d$, and let $Q_\iii=x_\iii+[0,b^{-n})^d$. Then $\Q_n=\{Q_\iii\,:\,\iii\in\Lambda^n\}$ is the family of half-open $b$-adic subcubes of $[0,1)^d$ of level $n$. If $\iii=(i_1,\ldots, i_n)\in\Lambda^n$, and $1\le j \le n$, we use $\iii|_j=(i_1,\ldots,i_j)$ to denote the subword containing the first $j$ elements.

The cascade is driven by a random vector $W=(W_i)_{i\in\Lambda}$, where the $W_i\ge 0$ are random variables with 
\begin{equation}\label{eq:cascade_gen}
\E\left(\sum_{i\in\Lambda}W_i\right)=b^d\,.
\end{equation}
We attach an independent copy $W_\iii$ of $W$ to each $\iii\in\Lambda^n$, $n\in\N$. For a fixed $\iii=(\iii_j)_{j=1}^n\in\Lambda^n$,
let
\[\nu_n(x)=\prod_{j=1}^n (W_{\iii|_{j-1}})_{\iii_j},\]
for each $x\in Q_\iii$ (where $W_\varnothing=W$), and let $\nu$ be the weak*-limit of the measures $d\nu_n(x)=\nu_n(x)$. This random cascade measure $\nu$ is non-zero with positive probability if and only if the condition 
\begin{equation}\label{eq:KP}
    \sum_{i\in\Lambda}\E(W_i\log W_i)< d b^d
\end{equation}
is satisfied \cite{KahanePeyriere1976}. This subcriticality condition \eqref{eq:KP} is our standing assumption throughout the paper.

In our main result, we consider Mandelbrot multiplicative cascades defined on curves with nonzero curvature. In this curvilinear setting, the cascade measure is the push forward $\nu  \circ \gamma^{-1}$ where $\gamma: [0,1] \rightarrow \R^2$ is a $C
^2$-curve with $\det(\gamma'(t),\gamma''(t))\neq 0$, and $\nu$ is the cascade measure on the unit interval. We use the following notation: let $\Q_n$, $n\ge 0$ be the $b$-adic filtration on the unit interval and let $\D_n = \gamma(\Q_n)$. Let $\mu_n=\nu_n \circ \gamma^{-1}$, $\mu=\nu \circ \gamma^{-1}$, where $(\nu_n)_n$ is the sequence of the cascade measures associated with an initial random variable $W$. Without loss of generality, we assume $|\gamma'|=1$ and denote
\[\int_{\gamma(J)}f(x)\,dx=\int_{J}f(\gamma(t))\,dt\,,\]
so that the integration is with respect to the arc length. Let us also denote $\Gamma=\gamma([0,1])$.

Throughout the paper, we impose the following additional assumptions on $W_i$, $i\in\Lambda$:
\begin{align}
    &\E(W_i)=1\,,\label{eq:unif}\\
    &\E(W_i^p)<\infty\text{ for all }0<p<\infty\,.\label{eq:moments}
\end{align}

Denote for $q\ge 0$, 
\[\tau(q)=dq-\log\left(\sum_{i\in\Lambda}\E(W_i^q)\right)\,.\]

If there exists a value $q$ such that $q \tau'(q) = \tau(q)$, then this value is unique. We denote it by $q_{\max}$. Note that $q\tau'(q) \geq \tau (q)$ if and only if $q \leq q_{\max}$. If $ q\tau'(q) \geq \tau(q)$ for all $q$, we let $q_{\max} =\infty$.

For $p >0$, we define $\widetilde{\tau}(p)$ as follows:
\begin{equation} \label{eq:alpha-W}
\widetilde{\tau}(p) =
\begin{cases}
\tau(p)& \text{ if } \tau'(p)\ge\tau(p)/p\\ 
\frac{p\tau(q_{\max})}{q_{\max}} & \text{ otherwise. }
\end{cases}
\end{equation}
Also, we let 
\[\alpha_{\min}=\tau'(q_{\max})=\frac{\tau(q_{\max})}{q_{\max}}\,,\]
or let $\alpha_{\min}=\lim_{q\to\infty}\tau'(q)$ if $q_{\max} = \infty$. In other words, $\alpha_{\min}=\lim_{p\to\infty}\tfrac{\widetilde{\tau}(p)}{p}$.

Note that 
\[\alpha_{\min}=\inf\{\dim(\nu,x)\,:\,x\in\text{spt } \nu\} = \inf\{\dim(\mu,x)\,:\,x\in\text{spt }\mu\}\] 
almost surely on non-extinction of the measure $\mu$. We refer e.g. to \cite{Molchan1996, Barral2000, Heur} for these and other basic facts about the multifractal analysis of the cascade measures. We will use the standard $O(\cdot)$ notation and also the notation $f\lesssim g$ as a synonym for $f=O(g)$. If $f\lesssim g\lesssim f$, we denote $f\sim g$. If a constant $C$ may depend on a parameter such as $p$, we write $f \lesssim_p g$ meaning that $f \leq C(p) \, g$  where the constant $C(p) $ may depend on $p$. The dependence on implicit constants will be clarified as needed. For $\xi\in\R^d$, we use the familiar notation $|\xi|_\infty = \max\{|\xi_1|, \cdots, |\xi_d| \}$.

\subsection{Auxiliary results for the Mandelbrot cascades}\label{sec:aux}

Throughout this section, we prove auxiliary results for the cascade measures. We note that the results in this section do not depend on the geometry of the support of the cascade, whether it is a cube in $\R^d$ or a curve, and they could be stated purely in symbolic terms via $\Lambda^{\N}$. To cover also the case $d>1$, we state the results for cascades on $[0,1]^d$, but we stress that they also hold for the curvilinear cascades.

For $1 \leq p,q< \infty$ and $1\le j\le n$, we let
$$
S(p,q,j,n) = \left( \sum_{I \in \Q_j} \left( \sum_{J \in \Q_n , J \subset I} \nu_n(J)^q \right)^{p/q} \right)^{1/p}.
$$ If $p=\infty$, we take the $\ell^\infty$ norm over $I \in \mathcal{Q}_j$, i.e.
$$
S(\infty,q,j,n) =  \sup_{I \in \Q_j} \left( \sum_{J \in \Q_n , J \subset I} \nu_n(J)^q \right)^{1/q}.
$$
Similarly, 
\[S(p,\infty,j,n)=\left( \sum_{I \in \Q_j}\left(\sup_{J \in \Q_n , J \subset I}\nu_n(J)\right)^p\right)^{1/p}.\]
If $p=q=\infty$, we define $S(\infty,\infty, j,n):=S(\infty,1,n,n)$.

To estimate $\E(S(p,q,j,n))$, we use the auxiliary random variables
    \begin{equation*}
    W_q=\frac{b^d(W_j^q)_{j\in\Lambda}}{\sum_{j\in\Lambda}\E(W_j^q)}\,,
    \end{equation*}
    and the auxiliary measure defined by setting $\nu_{j,j}(x)=1$, and
    $$
    \nu_{j,n} (x) = \prod_{k=j+1}^n (W_{\iii|_{k-1}})_{\iii_k}\,,
    $$
    if $n>j$.
    For each $I \in \Q_j$, we define a sequence 
    \[Y_{j,n}(q,I):=b^{ndq}\left(\sum_{i\in\Lambda}\E(W_i^q)\right)^{-(n-j)}\sum_{J\in\Q_n, J \subseteq I }\nu_{j,n}(J)^q\,.\]
    Then, $Y_{j,n}(q,I) $ yields the total measure of $I$ for a cascade measure on the interval $I$ generated by $W_q$.

We next provide a variant of a classical result of Kahane and Peyri\`ere \cite{KahanePeyriere1976} on the $L^q$-boundedness of $\nu_n([0,1]^d)$.

\begin{lemma}\label{lem:E_Y_jn}
    Suppose that $1\leq q<p < q_{\max}$. If $1 < p/q \leq 2$, for any $1 \leq j \leq n $ and  $I \in \Q_j$, we have
    
    \begin{equation}\label{eq:E_Y_1}
    \E(Y_{j,n }(q,I)^{p/q}) \leq \frac{b^{-2\tau(\frac{p}{2})+\frac{p\tau(q)}{q}}}{1-b^{-\tau(p)+\frac{p\tau(q)}{q}}}\,.    
    \end{equation}
    If $2^k < p/q \leq 2^{k+1}$ for some $k \geq 1$, for any $1 \leq j \leq n$ and $I \in \Q_j$, then 
    \begin{equation}\label{eq:E_Y_2^k}
    \E(Y_{j,n}(q,I)^{p/q}) \leq \left({1-b^{-\tau(2q)+2\tau(q)}} \right)^{-2^{k+1}}.
    \end{equation}
\end{lemma}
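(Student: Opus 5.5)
The plan is to exploit the self-similarity of $Y_{j,n}(q,I)$ and run a Kahane–Peyri\`ere type recursion on the moments. Fix $q$, write $r=p/q>1$, $m=n-j$, and $V=W_q$. Unwinding the definitions, $\nu_n(J)^q b^{ndq}$ is a product of $q$-th powers of weights, so
\[
Y_{j,n}(q,I)=b^{-md}\sum_{\iii\in\Lambda^m}\prod_{k=1}^m (V_{\iii|_{k-1}})_{\iii_k}.
\]
Its law depends only on $m$; call it $Z_m$. Then $\E Z_m=1$, and splitting at the first generation gives the distributional identity
\[
Z_{m+1}=b^{-d}\sum_{i\in\Lambda}V_i Z_m^{(i)},
\]
where the $Z_m^{(i)}$ are i.i.d.\ copies of $Z_m$, independent of $V$. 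I will record the two exponents that appear:
\[
b^{-dr}\sum_i\E(V_i^r)=b^{-\tau(p)+r\tau(q)},\qquad b^{-dr}\Bigl(\sum_i\E(V_i^{r/2})\Bigr)^2=b^{-2\tau(p/2)+r\tau(q)}.
\]
Both follow from $\sum_i\E W_i^s=b^{ds-\tau(s)}$. The key input from $q<p<q_{\max}$ is that $s\mapsto\tau(s)/s$ has derivative $(s\tau'(s)-\tau(s))/s^2>0$ on $(0,q_{\max})$. Hence $\tau(p)/p>\tau(q)/q$, so $\rho:=b^{-\tau(p)+r\tau(q)}<1$.

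\emph{Case $1<r\le 2$.} Since $0<r/2\le1$, subadditivity of $t\mapsto t^{r/2}$ gives
\[
\Bigl(\sum_i x_i\Bigr)^r\le\Bigl(\sum_i x_i^{r/2}\Bigr)^2=\sum_i x_i^r+\sum_{i\ne k}x_i^{r/2}x_k^{r/2}.
\]
Apply this with $x_i=b^{-d}V_iZ_m^{(i)}$ and take expectations, using independence of $V$ from the $Z^{(i)}$ and of $Z^{(i)}$ from $Z^{(k)}$. Jensen gives $\E (Z_m^{r/2})\le1$. This yields
\[
a_{m+1}\le\rho\, a_m+C,\qquad a_m:=\E Z_m^r,
\]
with $C=b^{-dr}\sum_{i\ne k}\E(V_i^{r/2}V_k^{r/2})$. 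I then want $C\le b^{-2\tau(p/2)+r\tau(q)}$, i.e.\ $\sum_{i\ne k}\E(W_i^{p/2}W_k^{p/2})\le(\sum_i\E W_i^{p/2})^2$. This is immediate if the coordinates are independent, or more generally negatively correlated in this sense. Otherwise I would absorb the diagonal differently, e.g.\ by bounding $\E(\sum_iV_i^{r/2})^2$ directly, at the cost of replacing $\tau(p/2)$ by the corresponding joint moment. Iterating from $a_0=1$ gives $a_m\le\rho^m+C\sum_{l<m}\rho^l$. Since $C\ge 1-\rho$ (this holds because $\E Z_1^r\ge 1$ and $\E Z_1^r\le\rho+C$), this is at most $C/(1-\rho)$, which is \eqref{eq:E_Y_1}.

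\emph{Case $2^k<r\le2^{k+1}$.} Here I would first use Jensen/H\"older to get $\E Z_m^r\le(\E Z_m^{2^{k+1}})^{r/2^{k+1}}$. It then suffices to show
\[
\E Z_m^{2^{l}}\le\theta^{-2^{l}}\quad\text{for all }l\ge1,\qquad \theta:=1-b^{-\tau(2q)+2\tau(q)},
\]
uniformly in $m$; note $\theta>0$ because $2q<p<q_{\max}$. The base case $l=1$ is the classical $L^2$ computation. The recursion gives $\E Z_{m+1}^2\le b^{-\tau(2q)+2\tau(q)}\E Z_m^2+(\text{cross terms}\le1)$, hence $\E Z_m^2\le\theta^{-1}\le\theta^{-2}$. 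For the induction step I would square the identity $Z_{m+1}=b^{-d}\sum_iV_iZ_m^{(i)}$ and control $\|Z\|_{2^{l+1}}$ through $\|Z^2\|_{2^{l}}$. Concretely, I would write $Z_m^2$ as a cascade-type sum with weights $V_i^2$, plus cross terms bounded via independence and the induction hypothesis. The aim is a linear recursion in $\|Z_m\|_{2^{l+1}}^{2^{l+1}}$ whose contraction factor and inhomogeneous term combine to give the exponent $2^{l+1}$ on $\theta^{-1}$.

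The main obstacle is this second induction step. One has to keep the constants from compounding beyond $\theta^{-2^{l}}$, and to see that only the ratio $b^{-\tau(2q)+2\tau(q)}$ enters, with the higher moments $\tau(2^lq)$ used merely qualitatively through $2^lq<q_{\max}$ to guarantee finiteness and contraction. If the direct approach is messy, I would instead use a Burkholder/Rosenthal-type martingale inequality for $(Z_m)_m$: its increments are conditionally centered sums of independent terms, and their quadratic variation is again governed by the $W_q$-cascade with squared weights.
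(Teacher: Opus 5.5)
Your case $1<r\le 2$ is the paper's argument: the same inequality $(\sum_i x_i)^r\le(\sum_i x_i^{r/2})^2$, the same first-generation recursion, and Jensen for $\E Z_m^{r/2}\le 1$. You close the recursion by iterating from $a_0=1$ with $C\ge 1-\rho$, whereas the paper uses the submartingale bound $a_m\le a_{m+1}$; both work. Your caveat about the cross term is well founded. The paper silently factorizes $\E(W_{q,i_1}^{p/(2q)}W_{q,i_2}^{p/(2q)})$, and without independent coordinates \eqref{eq:E_Y_1} can fail. For example, take $b=4$, $d=1$, $W_0=\dots=W_3=W$ with $W\in\{0,2\}$ equiprobable, $q=1$, $p=2$: then $q_{\max}=\infty$ and the right-hand side is $2$, but $\E Y_{j,j+2}(1,I)^2=5/2$.

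\textbf{The gap: the case $k\ge 1$.} Your induction target $\E Z_m^{2^l}\le\theta^{-2^l}$ is false already for $l=2$. Take $d=1$, $b=100$ and i.i.d.\ $W_i\in\{0,10\}$ with $\PP(W_i=10)=1/10$. Then $\tau(s)=(s-1)/2$, so $q_{\max}=\infty$ and all standing assumptions hold. Here $\theta=9/10$ and $\theta^{-4}\approx 1.524$. But $Z_1=N/10$ with $N\sim\mathrm{Bin}(100,1/10)$, so $\E Z_1^4=\E N^4/10^4\approx 1.594$.

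This is structural, not bookkeeping. For i.i.d.\ coordinates, the fourth-moment recursion contains the off-diagonal term $6\rho_2\,\E Z_m^2$, with $\rho_2=b^{-\tau(2q)+2\tau(q)}$. That term alone exceeds the first-order margin $4\rho_2$ of $\theta^{-4}=(1-\rho_2)^{-4}$ once $\rho_2$ is small and $b\rho_2$ is large. So no recursion controlled by $\theta$ alone can return $\theta^{-4}$. Separately, your Jensen step up to order $2^{k+1}$ requires $2^{k+1}q<q_{\max}$. Only $p<q_{\max}$ is assumed, and moments of order $2^{k+1}$ may then be infinite.

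\textbf{The stated bound itself fails.} Since $Z_1=Y_{n-1,n}(1,I)$ and $p/q=4$ is admissible with $k=1$, the same example contradicts \eqref{eq:E_Y_2^k}. So no strategy will produce that constant. The paper's proof does not establish it either. It iterates \eqref{eq:E_Y_pq} along $p_i=2^iq$, but \eqref{eq:E_Y_pq} rests on $(\sum_i x_i)^{p/q}\le(\sum_i x_i^{p/(2q)})^2$. That inequality holds only for $p/q\le 2$; for $p/q>2$ it reverses.

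What is true, and what Lemma \ref{lem:E(S(p,q,j,n))} needs for fixed finite $p$, is $\sup_{j,n,I}\E Y_{j,n}(q,I)^{p/q}<\infty$. The standard Kahane--Peyri\`ere induction gives this. With $K=\lceil r\rceil$, use $(\sum_i x_i)^r\le(\sum_i x_i^{r/K})^K$. The diagonal contributes $b^{-\tau(p)+r\tau(q)}\E Z_m^r$, with a factor $<1$. Every off-diagonal monomial involves only moments of $Z_m$ of order at most $(K-1)r/K\le K-1$, and these are bounded by induction on $K$. The resulting constant depends on $r$. So the uniform-in-$p$ bound $C_q^p$ that Lemma \ref{lem:E(S(p,q,j,n))} draws from \eqref{eq:E_Y_2^k} when $q_{\max}=\infty$ is not available this way.
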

\begin{proof}
   The proof is similar to the proof of Theorem 3 in \cite{Heur}. Instead of the arithmetic scales $k < p/q \leq k+1$ for $k \in \N$ used in \cite{Heur}, we proceed by updating $\E(Y_{j,n} (q,I)^{p/q})$ in dyadic scales, that is $2^k  < p/q \leq 2^{k+1}$. We provide the details for the reader's convenience.
    
    Note that
    $$
    Y_{j,n+1}(q,I) = b^{-d} \sum_{i \in \Lambda } W_{q,i} Y_{j+1,n+1} (q,I_i)
    $$
    where $I_i$, $i\in\Lambda$ are the elements of $\Q_{j+1}$ satisfying $I_j\subset I$ and $(W_{q,i})_{i\in\Lambda}$ is an independent copy of $W_q$. From now on, We simply write $\E(Y_{j,n+1,q} ) :=\E(Y_{j,n+1}(q,I))$ and $\E(Y_{j+1,n+1,q} ) :=\E(Y_{j+1,n+1}(q,I_i))$, since these expressions are independent of $I$ and $I_i$. Observe that
    \[
    \begin{split}
        Y_{j,n+1}(q,I)^{\frac{p}{q}} &\leq b^{-\frac{dp}{q}} \left[ \sum_{i \in \Lambda}W_{q,i}^{\frac{p}{2q}} Y_{j+1, n+1}(q,I_i)^{\frac{p}{2q}} \right]^{2}\\
        & = b^{-\frac{dp}{q}}\sum_{i \in \Lambda} W_{q,i}^{\frac{p}{q}} Y_{{j+1,n+1}}(q,I_{i})^{\frac{p}{q}} \\
        & \quad +b^{-\frac{dp}{q}}\sum_{i_1 \neq i_2 } W_{q,i_1}^{\frac{p}{2q}} Y_{{j+1,n+1}}(q,I_{i_1})^{\frac{p}{2q}} W_{q,i_2}^{\frac{p}{2q}} Y_{{j+1,n+1}}(q,I_{i_2})^{\frac{p}{2q}}.
    \end{split}
    \]
    Since $Y_{j+1,m}(q,I_i)^{p/q}$ is a submartingale in $m$,
    $$
    \E (Y_{j+1,n+1,q}^{p/q}) \leq \E( Y_{j+1,n+2,q}^{p/q}) = \E(Y_{j,n+1,q}^{p/q}). 
    $$
    Hence, we have
    \[
    \begin{split}
        \E(Y_{j,n+1,q}^{\frac{p}{q}}) & \leq b^{-\frac{dp}{q}}\sum_{i \in \Lambda}\E( W_{q,i}^{\frac{p}{q}})\E( Y_{{j,n+1,q}}^{\frac{p}{q}}) \\
        & \quad +b^{-\frac{dp}{q}}\E(Y_{{j+1,n+1,q}}^{\frac{p}{2q}})^2 \left[  \sum_{i\in\Lambda } \E(W_{q,i}^{\frac{p}{2q}}) \right]^2 .
    \end{split}
    \]
    Since 
    \begin{equation*}
    b^{-\frac{dp}{q}} \sum_{i\in \Lambda} \E(W_{q,i}^{p/q}) = b^{-\tau(p) + \frac{p\tau (q)}{q}}\,
    \end{equation*}
    and
    \begin{equation*}
    b^{-\frac{dp}{2q}} \sum_{i\in \Lambda} \E(W_{q,i}^{p/2q}) = b^{-\tau(\frac{p}{2}) + \frac{p\tau (q)}{2q}}\,,
    \end{equation*}
    we obtain that
    \begin{equation}\label{eq:E_Y_pq}
    \E (Y_{j,n+1,q}^{p/q}) \leq \frac{b^{-2\tau(\frac{p}{2})+\frac{p\tau(q)}{q}}}{1-b^{-\tau(p)+\frac{p\tau(q)}{q}}}\E(Y_{j+1,n+1,q}^{p/2q})^2.    
    \end{equation}

    Assume that $2^k < p/q \leq 2^{k+1}$ for some $k \geq 0$. If $k=0$, since $p/2q <1$, we have $\E(Y_{j+1,n+1,q}^{p/2q}) \leq 1$. Then, \eqref{eq:E_Y_pq} implies \eqref{eq:E_Y_1}. If $k \geq 1$, we first note that
      \begin{align*}
\E(Y_{j+1,n+1,q}^{p/2q})\le\E(Y_{j+1,n+1,q}^{2^k})\,,
    \end{align*}
    and $    b^{-2\tau(\frac{p}{2})+\frac{p\tau(q)}{q}} \leq 1$ for any $p,q$ such that $1 \leq q< p/2<q_{\max}$. Then, we consider $p_i = 2^iq $ for $1 \leq i \leq k$. Using \eqref{eq:E_Y_pq} inductively, we obtain that
    \[
    \E(Y_{j,n+1,q}^{p/q}) \leq \prod_{i=1}^{\min\{k,n-j\}} \left(1-b^{-\tau(2^{k+1-i}q)+2^{k+1-i}\tau(q)} \right)^{-2^i}\left( 1-b^{-\tau(p)+\frac{p\tau(q)}{q}}\right)^{-1}\,.
    \]
  
    Since $\tau(p)-\frac{p\tau(q)}{q}$ is an increasing function of $p < q_{\max}$ for a fixed $q$, we get
    $$
    b^{-\tau(2^{k+1-i}q)+2^{k+1-i}\tau(q)}  \geq b^{-\tau(2q) +2\tau(q)},
    $$
    and
    $$
    b^{-\tau(p)+\frac{p\tau(q)}{q}} \geq b^{-\tau(2q) +2\tau(q)}.
    $$
    Thus, we have established \eqref{eq:E_Y_2^k}.
\end{proof}

\begin{lemma}\label{lem:E(S(p,q,j,n))}
For $1 \leq p,q\leq \infty$ and $1\le j\le n$, we let
$$
S(p,q,j,n) = \left( \sum_{I \in \Q_j} \left( \sum_{J \in \Q_n , J \subset I} \nu_n(J)^q \right)^{p/q} \right)^{1/p}.
$$
Then, we have
\begin{equation}\label{eq:E(S(p,q,j,n))}
\E (S(p,q,j,n)) \lesssim b^{-j\widetilde{\tau}(p) / p  - (n-j) \widetilde{\tau}(q)/q}. 
\end{equation}
Note that $\lim_{p \rightarrow \infty} \widetilde{\tau}(p)/p =\alpha_{\min}$. Thus, in \eqref{eq:E(S(p,q,j,n))}, we adopt the convention $\widetilde{\tau}(\infty) / \infty =\alpha_{\min}  $ if $p=\infty$ or $q=\infty$.
\end{lemma}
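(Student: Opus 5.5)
Write $\nu_n(J)=\nu_j(I)\,\nu_{j,n}(J)$ for $J\subset I\in\Q_j$, where $\nu_j(I)=\prod_{k\le j}(W_{\iii|_{k-1}})_{\iii_k}$ depends only on the first $j$ generations and is independent of $\nu_{j,n}$. By definition of $Y_{j,n}$,
\[
\sum_{J\subset I}\nu_n(J)^q=\nu_j(I)^q\, b^{-ndq}\Big(\sum_i\E(W_i^q)\Big)^{n-j}Y_{j,n}(q,I)=\nu_j(I)^q\, b^{-jdq}\, b^{-(n-j)\tau(q)}\,Y_{j,n}(q,I).
\]
Hence $S(p,q,j,n)^p=b^{-jdp}b^{-(n-j)p\tau(q)/q}\sum_{I\in\Q_j}\nu_j(I)^pY_{j,n}(q,I)^{p/q}$.

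\emph{Case $q\le p<q_{\max}$, both finite.} Use Jensen, $\E S\le(\E S^p)^{1/p}$. Independence gives
\[
\E\sum_I\nu_j(I)^pY^{p/q}=\E\Big(\sum_I\nu_j(I)^p\Big)\,\E(Y_{j,n}^{p/q})=\Big(\sum_i\E W_i^p\Big)^j\E(Y_{j,n}^{p/q}).
\]
Then $b^{-jdp}(\sum\E W_i^p)^j=b^{-j\tau(p)}$, and Lemma~\ref{lem:E_Y_jn} bounds $\E(Y_{j,n}^{p/q})$ uniformly in $j,n$; when $p=q$ one has $\E Y=1$. This gives $b^{-j\tau(p)/p-(n-j)\tau(q)/q}$, and $\tau=\widetilde\tau$ here.

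\emph{Case $p<q$.} Use Hölder/Jensen: $\sum_I a_I^{p/q}\le(\#\Q_j)^{1-p/q}(\sum_I a_I)^{p/q}$, i.e.\ $S(p,q,j,n)\le b^{jd(1/p-1/q)}S(q,q,j,n)$, and $S(q,q,j,n)$ is the full $\ell^q$ sum at level $n$. One could reduce to the previous case, but this loses sharpness: with $p<q$ one expects $b^{-j\tau(p)/p}$, not $b^{jd(1/p-1/q)-j\tau(q)/q}$. So for $p<q$ I would bound directly instead. Since $p/q<1$, Jensen in the $Y$ variable gives $\E(Y^{p/q})\le1$ by conditioning (and $\E Y=1$), so $\E S^p\le b^{-j\tau(p)-(n-j)p\tau(q)/q}$. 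This only needs $p\ge1$ for $\E S\le(\E S^p)^{1/p}$, which holds. The ordering of $p,q$ is therefore irrelevant, and the only requirement is that $\E Y^{p/q}$ be bounded, which holds for $p/q\le1$ trivially and for $q\le p<q_{\max}$ by Lemma~\ref{lem:E_Y_jn}.

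\emph{Exponents beyond $q_{\max}$ and $\infty$.} This is the main obstacle: the case $p\ge q_{\max}$ (or $q\ge q_{\max}$), where $\widetilde\tau$ is linear. The plan is monotonicity of $\ell^p$ norms: for $p'\le p$, $\|\cdot\|_{\ell^p}\le\|\cdot\|_{\ell^{p'}}$. Choose $p'<\min(p,q_{\max})$ close to $q_{\max}$. Then $S(p,q,\cdot)\le S(p',q,\cdot)$ with exponent $\tau(p')/p'$, and $\tau(p')/p'\to\alpha_{\min}=\widetilde\tau(p)/p$ as $p'\to q_{\max}$. Similarly replace $q$ by $q'$ in the inner sum (inner norm monotone as well). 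This yields the bound with $\widetilde\tau(p)/p-\epsilon$. To get it exactly, with only the constant $\lesssim$, take $p'=q_{\max}$ itself when finite. The uniform bound then needs the lemma at the endpoint, which I expect fails (the denominator $1-b^{-\tau(p)+p\tau(q)/q}$ stays fine, but $\E Y^{p/q}$ for $p=q_{\max}$ is still finite by Kahane--Peyrière since $\tau(p)>p\tau(q)/q$ strictly for $p>q$, $p\le q_{\max}$). Alternatively, accept a constant depending on $\epsilon$ and interpret $\lesssim$ loosely; I would check the endpoint via Kahane--Peyrière. The $p=\infty$ or $q=\infty$ cases follow the same way from $\ell^\infty\le\ell^{p'}$. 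A separate argument is needed only if $q_{\max}\le1$, handled again by monotonicity from $p'=1$, where $\widetilde\tau(1)=\tau(1)$ or its linear version.
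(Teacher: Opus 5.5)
Your argument is the paper's. It factors $\sum_{J\subset I}\nu_n(J)^q$ through $Y_{j,n}(q,I)$; your $\nu_j(I)$ is the weight product rather than the mass, and the factor $b^{-jdp}$ compensates correctly. It then uses independence of $\nu_j(I)$ and $Y_{j,n}(q,I)$, Jensen's $\E(Y^{p/q})\le 1$ for $p\le q$, a uniform moment bound for $q<p$, and the reduction $S(p,q,j,n)\le S(\min\{p,q_{\max}\},\min\{q,q_{\max}\},j,n)$.

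Drop the hedge at the endpoint. The paper itself uses $p=q_{\max}$ tacitly, although Lemma~\ref{lem:E_Y_jn} is stated only for $p<q_{\max}$. For $q<q_{\max}$ the map $r\mapsto\tau(r)/r$ is strictly increasing on $[q,q_{\max}]$, so $b^{-dp/q}\sum_i\E(W_{q,i}^{p/q})=b^{-\tau(p)+p\tau(q)/q}<1$ at $p=q_{\max}$. This is exactly the Kahane--Peyri\`ere condition for the $W_q$-cascade with exponent $p/q$, so $\sup_{j,n}\E(Y_{j,n}(q,I)^{q_{\max}/q})<\infty$. Also, $q_{\max}\le 1$ cannot occur: $\tau(1)=0$, and \eqref{eq:KP} says precisely $\tau'(1)>0$.

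The real divergence is the case $q_{\max}=\infty$. If $q=\infty$ and $p<\infty$, nothing is lost: for $q'>p$ the constant is exactly $1$, so $\E S(p,\infty,j,n)\le\inf_{q'>p}b^{-j\tau(p)/p-(n-j)\tau(q')/q'}$ gives the exact exponent. The same holds for $p=q=\infty$, where $S=\sup_{J\in\Q_n}\nu_n(J)$; this is what the paper does.

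If $p=\infty>q$, the paper instead asserts $\E(Y_{j,n}(q,I)^{p/q})\le C_q^p$ with $C_q$ independent of $p,j,n$, and lets $p\to\infty$ via Fatou. So your $\epsilon$-loss does fall short of the statement as written. But that $p$-uniform bound is false in general, since it would force $\norm{Y_{j,n}(q,I)}_{L^\infty}\le C_q^q$ for all $n$. The culprit is the first inequality in the proof of Lemma~\ref{lem:E_Y_jn}, $(\sum_i x_i)^{p/q}\le(\sum_i x_i^{p/2q})^2$, which is valid only for $p/q\le 2$. For fixed $p<q_{\max}$, the bound uniform in $j,n$ that you quote still holds by Kahane--Peyri\`ere.

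Moreover, the exact claim itself fails. Let $d=1$, $b=2$, and $W_0,W_1$ i.i.d.\ with $\PP(W_i=6/5)=5/6=1-\PP(W_i=0)$. Then $\tau(r)=(r-1)\log_2(5/3)$, so $q_{\max}=\infty$ and $2^{-\alpha_{\min}}=3/5$. On the other hand, $S(\infty,1,j,n)=(3/5)^j\max_I M^{(I)}_{n-j}$. Here the maximum runs over the $Z_j\approx(5/3)^j$ (on survival) surviving level-$j$ nodes of a Galton--Watson tree, and the $M^{(I)}_m$ are i.i.d.\ copies of $(3/5)^mZ_m$. Each equals $(6/5)^m$ with probability $(5/6)^{2^{m+1}-2}$. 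Taking $m=\lfloor\log_2 j\rfloor$ gives $\E S(\infty,1,j,j+m)\gtrsim(3/5)^j j^{\log_2(6/5)}$, so no uniform constant exists.

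So in this case your weakened version, with exponent $\alpha_{\min}-\epsilon$ and an $\epsilon$-dependent constant, is the correct statement. It is also all that Lemma~\ref{lem:unif_pq} needs, because the Markov/Borel--Cantelli argument there absorbs any factor $b^{\epsilon n}$.
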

\begin{proof}
    For any $1\le j\le n$ and $I \in \Q_j$, we denote 
    $$S(q,I,n) = \left( \sum_{J \in \Q_n, J \subset I} \nu_n(J)^q \right)^{1/q}, $$
        so that $S(p,q,j,n) = (\sum_{I \in \Q_j} S(q,I,n)^p)^{1/p} $.
        
    First, we consider the case when $p, q \leq q_{\max} < \infty$. If $p \leq q$, the required estimate can be easily derived by the Minkowski inequality: Since $p/q \leq 1$, conditional on $\nu_j$,  we have
        $$
        \E \left(  S(q,I,n)^p | \nu_j \right) \leq \E \left( \sum_{J \in \Q_n, J \subset I}\nu_n(J)^q|\nu_j \right)^{p/q} =  b^{-(n-j) p\tau(q)/q} \nu_j (I)^p.
        $$
    uniformly in $\nu_j$.
    Therefore,
    $$
    \E(S(p,q,j,n)|\nu_j)  \leq  \left( \sum_{I \in \Q_j}  \E(S(q,I,n)^p | \nu_j) \right)^{1/p} =  b^{-(n-j)\tau(q)/q}\left(\sum_{I \in \Q_j} \nu_j(I)^p \right)^{1/p}.
    $$
    By the law of total probability, we thus obtain
    $$
    \E(S(p,q,j,n)) \leq  b^{-(n-j)\tau(q)/q}\,\E\left(\sum_{I \in \Q_j} \nu_j(I)^p \right)^{1/p} = b^{-(n-j) \tau(q)/q -j\tau(p)/p}.
    $$
    If $p, q \leq q_{\max}$, recall that $\widetilde{\tau}(p) = \tau(p)$ and $\widetilde{\tau}(q)= \tau(q)$, respectively. Thus, we obtain \eqref{eq:E(S(p,q,j,n))}.

    Next, let us consider the case $p,q \leq q_{\max} < \infty$ and $p > q$. We have
    \begin{equation}\label{eq:S_to_Y}
    \begin{split}
        S(q,I,n) &=  b^{-(n-j)d}\left(\sum_{i\in\Lambda}\E(W_i^q)\right)^{(n-j)/q} \nu_j(I) Y_{j,n}(q,I)^{1/q}\\
        &= b^{-(n-j)\tau(q)/q}\nu_j(I)  Y_{j,n}(q,I)^{1/q}.
    \end{split}
    \end{equation}
    Therefore, for each  fixed $\nu_j$, we have 
    $$
        \E \left(  S(q,I,n)^p | \nu_j \right) \leq   b^{-(n-j) p \tau(q) /q} \nu_j(I)^p \E \left( Y_{j,n} (q,I)^{p/q}|\nu_j \right).
    $$
    By Lemma \ref{lem:E_Y_jn}, we obtain that $\E (Y_{j,n} (q,I)^{p/q}|\nu_j ) < \infty$ uniformly in $j$, $n$, and $I$. Since $Y_{j,n}(q,I)$ and $\nu_j$ are independent, we obtain that
    $$
        \E \left(  S(q,I,n)^p | \nu_j \right) \lesssim b^{-(n-j) p\tau(q) /q}  \nu_j(I)^p 
    $$
    uniformly in $\nu_j$. Repeating the remaining steps as in the case $p,q \leq q_{\max}$ and $p \leq q$, we obtain the desired estimate \eqref{eq:E(S(p,q,j,n))}.
    
    If $p > q_{\max}$ or $q > q_{\max}$, we get
    $$
    S(p,q, j,n) \leq S(\min(p,q_{\max}), \min(q, q_{\max}), j, n).
    $$
    Since $\widetilde{\tau}(r)/ r = \tau(q_{\max}) / q_{\max}$ for all $r \geq q_{\max}$, we obtain \eqref{eq:E(S(p,q,j,n))}.

    Lastly, let us consider the case $q_{\max} =\infty$. It suffices to consider the case when $p = \infty$ or $q=\infty$. Otherwise, we can repeat the argument above. If $q  < q_{\max}$ and $n,j$ are fixed, then $\widetilde{\tau}(p)/p \longrightarrow \alpha_{\min}$ and $ S(p,q,j,n)  \longrightarrow S(\infty, q, j,n) $ as $p \rightarrow \infty$. Also, Lemma \ref{lem:E_Y_jn} implies that $\E(Y_{j,n,q}^{p/q}) \leq C_q^p$ when $p>2q$, where 
    $$C_q = (1-b^{-\tau(2q)+2\tau(q)})^{-2/q}.$$
    Therefore, noting \eqref{eq:S_to_Y}, the implicit constant in \eqref{eq:E(S(p,q,j,n))} is uniform in $p$ as $p\longrightarrow\infty$. Thus, \eqref{eq:E(S(p,q,j,n))} for $p=q_{\max}=\infty$ easily follows by Fatou's lemma. Next, recall that the implicit constant in \eqref{eq:E(S(p,q,j,n))} equals $1$ when $p \leq q$ and thus \eqref{eq:E(S(p,q,j,n))} for $p<\infty$ and $q=q_{\max}=\infty$ follows by the same reasoning as above.  If $q_{\max}=p=q=\infty$, we use that $S(\infty, \infty, j,n) = S(\infty, 1, n,n)$ and \eqref{eq:E(S(p,q,j,n))} easily follows. 
\end{proof}

For $1 \leq p, q \leq \infty$, we define
$$
\varepsilon_{p,q,n} = \frac{1}{n} \sup\left\{ \log(S(p,q,j,n) ) + j \widetilde{\tau}(p)/p + (n-j) \widetilde{\tau}(q)/q  : 0 \leq j \leq n \right\}.
$$

\begin{lemma}\label{lem:unif_pq}
    For fixed $1 \leq p,q \leq \infty$, $\lim_{n \rightarrow \infty} \varepsilon_{p,q,n} =0$ almost surely on non-extinction
\end{lemma}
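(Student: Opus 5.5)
The plan is to prove separately that $\limsup_{n\to\infty}\varepsilon_{p,q,n}\le 0$ and $\liminf_{n\to\infty}\varepsilon_{p,q,n}\ge 0$ almost surely on non-extinction. The first inequality is a Borel--Cantelli argument built on Lemma~\ref{lem:E(S(p,q,j,n))}. The second only needs one good index $j$, and I would take $j=0$ or $j=n$, where the statement reduces to known facts about the $L^q$-spectrum of the cascade.

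\emph{Upper bound.} Fix $\delta>0$. By Markov's inequality and \eqref{eq:E(S(p,q,j,n))}, for every $0\le j\le n$,
\[
\PP\left(\log S(p,q,j,n)+j\widetilde{\tau}(p)/p+(n-j)\widetilde{\tau}(q)/q>\delta n\right)\le b^{-\delta n}\,b^{j\widetilde{\tau}(p)/p+(n-j)\widetilde{\tau}(q)/q}\,\E(S(p,q,j,n))\lesssim b^{-\delta n}.
\]
The implicit constant here is uniform in $j$ and $n$; this follows from Lemma~\ref{lem:E_Y_jn} and the proof of Lemma~\ref{lem:E(S(p,q,j,n))}, including the cases $p=\infty$ or $q=\infty$ via the Fatou argument. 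The case $j=0$ is handled in the same way, since $S(p,q,0,n)=S(q,[0,1]^d,n)$. A union bound over the $n+1$ values of $j$ gives
\[
\PP(\varepsilon_{p,q,n}>\delta)\lesssim (n+1)b^{-\delta n},
\]
which is summable in $n$. Borel--Cantelli then gives $\limsup_n\varepsilon_{p,q,n}\le\delta$ almost surely, and letting $\delta\downarrow0$ along a countable sequence gives $\limsup_n\varepsilon_{p,q,n}\le0$.

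\emph{Lower bound.} Since $\varepsilon_{p,q,n}$ is a supremum, it suffices to show $\log S(p,q,0,n)+n\widetilde{\tau}(q)/q\ge -o(n)$, i.e. $\sum_{J\in\Q_n}\nu_n(J)^q\ge b^{-n(\widetilde{\tau}(q)+o(1))}$ for $q<\infty$. For $q=\infty$ the corresponding statement is $\max_J\nu_n(J)\ge b^{-n(\alpha_{\min}+o(1))}$. I would obtain these from the multifractal formalism for cascades cited in Section~\ref{sec:prel_model} (Molchan, Barral). Almost surely on non-extinction, the $L^q$-spectrum of $\nu$ equals $\widetilde{\tau}(q)$ for all $q\ge1$, and there are points of local dimension arbitrarily close to $\alpha_{\min}$. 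Thus $\sum_{J\in\Q_n}\nu(J)^q\ge b^{-n(\widetilde{\tau}(q)+o(1))}$ and $\max_J\nu(J)\ge b^{-n(\alpha_{\min}+o(1))}$ along all large $n$.

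To pass from $\nu$ to $\nu_n$, write $\nu(J)=\nu_n(J)Z_J$. Here the $Z_J$ are identically distributed copies of the total mass $Z$ of the cascade, and they are independent of $\nu_n$. This gives
\[
\sum_J\nu(J)^q\le\left(\max_J Z_J\right)^q\sum_J\nu_n(J)^q,
\]
so it remains to show $\max_{J\in\Q_n}Z_J\le b^{\varepsilon n}$ eventually, for every $\varepsilon>0$.

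\emph{Main obstacle.} The union bound gives
\[
\PP\left(\max_J Z_J>b^{\varepsilon n}\right)\le b^{nd}\,b^{-\varepsilon n r}\,\E(Z^r),
\]
which is summable only if $\E(Z^r)<\infty$ for some $r>d/\varepsilon$, i.e. for arbitrarily large $r$. This requires all moments of $Z$, whereas Kahane--Peyri\`ere only guarantees $\E(Z^r)<\infty$ when $\tau(r)>0$. If that fails, I would instead avoid $\nu$ altogether and apply the lower-bound half of the multifractal formalism directly to the sums $\sum_J\nu_n(J)^q$. These are the sums actually used in Molchan's and Barral's proofs: they show $b^{n\tau(q)}\sum_J\nu_n(J)^q$ converges to a positive limit for $q<q_{\max}$ (a martingale of the same type as $Y_{0,n}(q,\cdot)$, which is positive on non-extinction). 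For $q\ge q_{\max}$ one compares with $q_{\max}$, using
\[
\left(\sum_J\nu_n(J)^q\right)^{1/q}\ge\max_J\nu_n(J),
\]
and a large-deviation argument to bound $\max_J\nu_n(J)$ from below. The zero--one step that turns positivity with positive probability into positivity on non-extinction is standard, via the self-similarity of the cascade.
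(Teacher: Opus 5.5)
Your upper-bound argument is the paper's proof: Markov's inequality combined with Lemma~\ref{lem:E(S(p,q,j,n))} (whose implicit constant is uniform in $j$ and $n$), a union bound over $0\le j\le n$, and Borel--Cantelli. The paper stops there. Its proof only establishes $\limsup_{n}\varepsilon_{p,q,n}\le 0$, which holds almost surely even without the non-extinction hypothesis. This is also the only direction used later: $\varepsilon_{p,q,n}$ enters Lemmas~\ref{lem:large_f}, \ref{lem:intrem_f}, \ref{lem:circ_decay_large} and \ref{lem:circ_decay_interm} only through upper bounds of the form $b^{n\varepsilon_{p,q,n}}$. Your lower bound, which is the only place where non-extinction matters, therefore goes beyond what the paper proves.

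On that addition, your reduction to $j=0$ is fine. The route through $\nu$ and $\max_J Z_J$ should be dropped: as you note, it needs all moments of $Z$, and these fail as soon as $\tau(r)\le 0$ for some $r>1$ (e.g.\ lognormal weights). The fallback is correct for $1\le q<q_{\max}$, since $b^{n\tau(q)}\sum_J\nu_n(J)^q$ is the level-$n$ total mass of the subcritical $W_q$-cascade, as in the proof of Lemma~\ref{lem:dim_2}. For $q\ge q_{\max}$, including $q=\infty$, the ``large-deviation argument'' is only a placeholder. There is also no usable martingale at $q_{\max}$ itself, because the $W_{q_{\max}}$-cascade is critical and its total mass tends to $0$. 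An elementary replacement uses only subcritical martingales. For $1\le s<q'<q_{\max}$,
\[
\sum_J\nu_n(J)^{q'}\le\Bigl(\max_J\nu_n(J)\Bigr)^{q'-s}\sum_J\nu_n(J)^{s}\,,
\]
so almost surely on non-extinction $\max_J\nu_n(J)\ge b^{-n(\frac{\tau(q')-\tau(s)}{q'-s}+o(1))}\ge b^{-n(\tau'(s)+o(1))}$, by concavity of $\tau$. Let $s\uparrow q_{\max}$ along a countable sequence. Since $\tau'(s)\downarrow\alpha_{\min}$ (also when $q_{\max}=\infty$), this gives $\max_J\nu_n(J)\ge b^{-n(\alpha_{\min}+o(1))}$. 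Then $\bigl(\sum_J\nu_n(J)^q\bigr)^{1/q}\ge\max_J\nu_n(J)$, together with $\widetilde{\tau}(q)/q=\alpha_{\min}$ for $q\ge q_{\max}$, settles the remaining cases. With this step filled in, your proposal proves the two-sided limit, which the paper's proof does not.
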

\begin{proof}

    For any $\varepsilon >0$,  Lemma \ref{lem:E(S(p,q,j,n))} implies that
    $$
        \PP \left( S(p,q,j,n) > b^{-j\widetilde{\tau}(p)/p - (n-j)\widetilde{\tau}(q)/q + n\varepsilon}  \right) \leq b^{-n\varepsilon},
    $$
    and we obtain
    $$ \sum_{n \geq 0}\sum_{0 \leq j \leq n} \PP \left( S(p,q,j,n) > b^{-j\widetilde{\tau}(p)/p - (n-j)\widetilde{\tau}(q)/q + n\varepsilon}  \right) <  \sum_{n \geq 0} \sum_{ 0 \leq j \leq n }b^{-n \varepsilon} <\infty.$$
    Borel-Cantelli lemma implies that, almost surely, there are only finitely many pairs $0 \leq j \leq n$ such that 
    $$
    S(p,q,j,n) > b^{-j\widetilde{\tau}(p)/p - (n-j)\widetilde{\tau}(q)/q + n\varepsilon}. 
    $$
In particular, almost surely, there is $n_0\in\N$ such that
\[\sup\left\{ \log(S(p,q,j,n) ) + j \widetilde{\tau}(p)/p + (n-j) \widetilde{\tau}(q)/q  : 0 \leq j \leq n \right\}\le\varepsilon n\]
for all $n\ge n_0$.    
\end{proof}

\begin{remark}
In the proof of the main theorems, we only use the cases $q=1$ and $q=2$, but the Lemmas \ref{lem:E(S(p,q,j,n))} and \ref{lem:unif_pq} work for all $1 \leq p, q \leq \infty$. 
\end{remark}

\subsection{A concentration inequality}\label{sec:conc}
We complete this section with our key concentration inequality, a variant of \cite[Proposition C.3]{AK}. 

\begin{lemma}\label{lem:conc}
For some $p >4$, let $\Phi(t)\lesssim t^{-p}$ when $t \geq 1$. Let $X_1,\ldots, X_N$ be independent random variables with zero expectation such that
\begin{equation}\label{eq:tail_bd}
\PP(X_k>t)\le\Phi(t)    
\end{equation}
for all $1\le k\le N$. Then, for all $a_1,a_2,\ldots a_N\ge 0$, $M>1$, and $t>0$,
\[\PP\left(\sum_{k=1}^N a_k X_k>t\right)=N\Phi(M)+\exp\left(-\lambda t+O(\lambda^2)\sum_{k=1}^N a_k^2\right)\,,\]
where 
\begin{equation}\label{eq:lambda}
    \lambda=\frac{q\log M}{M\max_{1\le k\le N}a_k}\,.
\end{equation}
and $q\leq p/2-1$. The $O$-constant only depends on $p$ and $\Phi$. 
\end{lemma}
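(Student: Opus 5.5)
Truncate and then apply an exponential Chernoff bound to the truncated variables. The statement's equality should be read as ``$\le$''.

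\textbf{Truncation.} Let $E$ be the event that $X_k\le M$ for all $k$. By the union bound and \eqref{eq:tail_bd}, $\PP(E^c)\le N\Phi(M)$. Hence
\[\PP\Big(\sum a_kX_k>t\Big)\le N\Phi(M)+\PP\Big(\sum a_k \widetilde X_k>t\Big),\qquad \widetilde X_k=\min(X_k,M).\]
Since $\sum a_kX_k=\sum a_k\widetilde X_k$ on $E$, and $a_k\ge0$, only the upper tail matters. Truncating from above gives $\E\widetilde X_k\le \E X_k=0$, so the centring only helps.

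\textbf{Chernoff step.} By Markov's inequality and independence,
\[\PP\Big(\sum a_k\widetilde X_k>t\Big)\le e^{-\lambda t}\prod_k \E e^{\lambda a_k\widetilde X_k}.\]
It therefore suffices to show $\log\E e^{s\widetilde X_k}\le Cs^2$ for $0\le s\le s_0:=\lambda\max_k a_k=q\log M/M$, with $C$ depending only on $p$ and $\Phi$. Write $y=s\widetilde X_k$, so $y\le sM=q\log M$. Use $e^{y}\le 1+y+\tfrac{y^2}{2}e^{\max(y,0)}$. Taking expectations and using $\E\widetilde X_k\le0$,
\[\E e^{s\widetilde X_k}\le 1+\tfrac{s^2}{2}\,\E\big(\widetilde X_k^2 e^{s\widetilde X_k^+}\big).\]
The negative part contributes at most $\E(X_k^-)^2$. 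The statement gives only an upper tail bound, so this requires a second-moment bound on $X_k^-$. Zero mean controls $\E X_k^-$ but not $\E (X_k^-)^2$, so I will assume a two-sided tail bound or bounded second moments as in the intended application. (In the paper the $X_k$ are centred cascade masses, which have all moments.)

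\textbf{Main obstacle: the positive part.} We need $\E\big(\widetilde X_k^2e^{s\widetilde X_k};\,0<\widetilde X_k\le M\big)=O(1)$ uniformly in $M$ and $s\le s_0$. Split at a fixed level. For $X_k\le 1$ the contribution is bounded. For $1<x\le M$, integrate by parts against the tail $\Phi(x)\lesssim x^{-p}$. The density of $x^2e^{sx}$ is $(2x+sx^2)e^{sx}$, and $e^{sx}\le e^{s_0x}\le M^{qx/M}$. Two regimes:
\begin{itemize}
\item For $x\le M/\log M$, $e^{sx}\le e^{q}$, so the contribution is $\lesssim\int_1^\infty x^{1-p}(1+sx)\,dx=O(1)$, since $p>4$.
\item For $M/\log M<x\le M$, use $e^{sx}\le M^q$ and $x^{-p}\lesssim (M/\log M)^{-p}$. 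The contribution is $\lesssim M^{q+3-p}(\log M)^{p}$, plus the boundary term $M^2e^{sM}\Phi(M)\le M^{2+q-p}$. The condition $q\le p/2-1$ gives $q+3-p\le 2-p/2<0$, using $p>4$, so these are $O(1)$, the logarithm being absorbed.
\end{itemize}
This is exactly where the hypotheses $p>4$ and $q\le p/2-1$ enter. Then $\log(1+Cs^2)\le Cs^2$, and with $s=\lambda a_k\le s_0$ the product is $\le\exp(O(\lambda^2)\sum a_k^2)$. This gives the claimed bound.
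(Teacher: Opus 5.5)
Your proposal is correct and follows essentially the paper's route: truncation at $M$ with a union bound, the Chernoff bound, the estimate $\E e^{sU}\le 1+\tfrac{s^2}{2}\E(U^2e^{sU^+})$ using $\E U\le 0$, and integration against the upper tail. The only difference is that you split $[1,M]$ at $M/\log M$, whereas the paper uses the concavity inequality $as\le a+q\log s$ on $[1,M]$ to get $e^{as}\Phi(s)\lesssim s^{-1-p/2}$ in one stroke.

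Your caveat about the negative part is justified. The paper asserts that $\E(X_k^2)\lesssim 1$ follows from the one-sided tail hypothesis, which is false, and the lemma as stated fails without some such control. In the application, however, the two-sided tail bounds on the real and imaginary parts of the cascade increments supply it.
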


\begin{proof}
Denote $X=\sum_{k=1}^N a_k X_k$,
$U_k=\min\{M,X_k\}$,
$Y_k=a_k U_k$, and $Y=\sum_{k=1}^N Y_k$. Clearly,
\begin{align}
\PP\left(X>t\right)\le\PP(Y>t)+\PP\left(\max_{1\le k\le N}X_k>M\right)\,.
\end{align}
The last term is estimated by the union bound
\begin{equation}\label{eq:union_b}
   \PP\left(\max_{1\le k\le N}X_k>M\right)\le N\Phi(M)\,.
\end{equation}

We proceed to estimate $\PP(Y>t)$ using the exponential moment method. Let $\lambda>0$ be a constant to be determined later. Using Markov's inequality, we have
\begin{equation}\label{eq:exp_moment}
\begin{split}
    \PP(Y>t)&=\PP(\exp(\lambda Y)>\exp(\lambda t))\\
    &\le\exp(-\lambda t)\E\left(\exp(\lambda Y\right))\\
    &=\exp(-\lambda t)\prod_{k=1}^N\E\left(\exp(\lambda a_k U_k)\right)\,.
    \end{split}
\end{equation}
For each $a>0$, we have
\begin{equation}
    \E(\exp(a U_k)))\le1+\frac{a^2}{2}\E\left(U_k^2\exp(a\max\{U_k,0\})\right)\,,
\end{equation}
see \cite[(C.27)]{AK}. 
Let $C=C(\Phi,p)$ such that
\begin{equation}\label{eq:tail_log}
\log\Phi(t)\le-p\log t+C\,.
\end{equation}
Splitting the integral into three parts, using change of variables and the tail bound \eqref{eq:tail_bd} and \eqref{eq:tail_log} yields
\begin{equation}\label{eq:int}
\begin{split}
    &\E\left(U_k^2\exp(a\max\{U_k,0\})\right)\\
    &=\int_{U_k<0}U_k^2\,d\PP+\int_{0\le U_k\le 1}U_k^2\exp(a U_k)\,d\PP+\int_{1<U_k\le M}U_k^2\exp(a U_k)\,d\PP\\
    &=\int_{X_k<0}X_k^2\,d\PP+\int_{0\le X_k\le 1}X_k^2\exp(a X_k)\,d\PP+\int_{\exp(a)}^{M^2 \exp(aM)}\PP\left(X_k^2\exp(a X_k)>t\right)\,dt\\
    &\le\E(X_k^2)+\exp(a)+\int_{s=1}^M(2s+as^2)\exp(a s)\PP\left(X_k>s\right)\,ds\\
    &\le\E(X_k^2)+\exp(a)+\int_{s=1}^M(2s+a s^2)\exp\left(as+C-p\log s\right)\,ds\,.
\end{split}
\end{equation}
If $a\leq \tfrac{q\log M}M$, then (since $\log$ is convex)
\[as\le a+q\log s\,,\]
for all $1\le s\le M$. Whence
\begin{align*}
    \exp\left(C+as-p\log s\right)\le\exp\left(C+a-(1+p/2)\log s\right)\lesssim s^{-1-p/2}\,.
\end{align*}
The implicit constant only depends on $p$ and $\Phi$, since $a \leq \frac{q \log M}{M} \lesssim q \lesssim p$ and $C$ depends on $\Phi$ and $p$. Combining with \eqref{eq:int} and noting that $\E(X_k^2)\lesssim 1$ by \eqref{eq:tail_bd} implies that 
\[\E\left(U_k^2\exp(a\max\{U_k,0\})\right)\lesssim 1\,,\] 
for all $1\le k\le N$. Noting \eqref{eq:lambda} and combining with \eqref{eq:exp_moment}, we have
\begin{align*}
    \PP(Y>t)\le\exp\left(-\lambda t+O(\lambda^2)\sum_{k=1}^N a_k^2\right)\,.
\end{align*}
Combining with \eqref{eq:union_b}, this gives the claim.
\end{proof}

\section{The lower bound of the Fourier dimension}\label{sec:Other Lemmas}

We now turn to the main novel feature in this work, the exact value of the Fourier dimension for curvilinear cascades. First, we prove the lower bound of the Fourier dimension.

\begin{theorem}\label{FD_up}
    Almost surely, $|\widehat{\mu}(\xi)|\lesssim_\beta |\xi|^{-\beta}$, if $\beta<\alpha_{\min}/2$.
\end{theorem}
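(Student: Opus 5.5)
The plan is to fix a large frequency $\xi$, pick a level $m$ with $b^{-m}\sim|\xi|^{-1/2}$, and telescope
\[
\widehat{\mu}(\xi)=\widehat{\mu}_m(\xi)+\sum_{k\ge m}\bigl(\widehat{\mu}_{k+1}(\xi)-\widehat{\mu}_k(\xi)\bigr).
\]
The first term is a deterministic oscillatory integral given the cascade up to level $m$. Each later increment is, conditionally on $\mathcal{F}_k=\sigma(\nu_k)$, a sum of independent mean-zero variables, to which Lemma~\ref{lem:conc} applies. Throughout we fix a small $\varepsilon>0$ and use Lemma~\ref{lem:unif_pq} with $(p,q)=(\infty,1)$, and with the $(2,\cdot)$ sums. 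This gives almost surely, for all large $k$ and all $j\le k$, $\mu_k(I)\le b^{-j(\alpha_{\min}-\varepsilon)}$ for every $I\in\D_j$, and similarly for sums over arcs.

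\emph{Deterministic term.} We write $\widehat{\mu}_m(\xi)=\sum_{J\in\D_m}\nu_m(J)\int_J e^{-2\pi i x\cdot\xi}\,dx$, where $\nu_m$ is constant on $J$ and $\mu_m(J)=\nu_m(J)b^{-m}$. Since $\Gamma$ is $C^2$ with nonvanishing curvature, the phase $t\mapsto\gamma(t)\cdot\xi$ has at most $O(1)$ critical points, and these are nondegenerate. By van der Corput, an arc $J$ at distance $\rho\gtrsim|\xi|^{-1/2}$ from the critical points satisfies $|\int_J|\lesssim\min\{b^{-m},(|\xi|\rho)^{-1}\}$. We split the arcs into the core (within $|\xi|^{-1/2}$ of a critical point) and dyadic shells at distance $\sim2^l|\xi|^{-1/2}$.

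The core contributes at most the $\mu_m$-mass of $O(1)$ arcs of length $\sim b^{-m}$, which is $\lesssim|\xi|^{-(\alpha_{\min}-\varepsilon)/2}$. Shell $l$ has mass $\lesssim(2^l|\xi|^{-1/2})^{\alpha_{\min}-\varepsilon}$, obtained by covering it with $O(1)$ arcs of the appropriate level. The oscillation gains a factor $\lesssim b^{m}(|\xi|2^l|\xi|^{-1/2})^{-1}\sim2^{-l}$ relative to the trivial bound. Summing over $l$ gives a geometric series, since $\alpha_{\min}-\varepsilon<1$; note $\alpha_{\min}\le\dim_2\le1$ on a curve. Hence $|\widehat{\mu}_m(\xi)|\lesssim|\xi|^{-(\alpha_{\min}-2\varepsilon)/2}$.

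\emph{Random increments.} For $k\ge m$ we write $\widehat{\mu}_{k+1}(\xi)-\widehat{\mu}_k(\xi)=\sum_{I\in\D_k}\mu_k(I)X_I$. Here $X_I=\sum_i(W_{I,i}-1)\,b^{k}\int_{I_i}e^{-2\pi i x\cdot\xi}dx$, the $I_i$ being the children of $I$. The $X_I$ are independent, mean zero, and $|X_I|\lesssim\sum_i|W_{I,i}-1|$, so by \eqref{eq:moments} they satisfy \eqref{eq:tail_bd} with $\Phi(t)\lesssim t^{-p}$ for any $p$. We treat real and imaginary parts (and $\pm$) separately and apply Lemma~\ref{lem:conc} conditionally on $\mathcal{F}_k$ with the following choices:
\begin{itemize}
\item coefficients $a_I=\mu_k(I)$, so that $\max a_I\le b^{-k(\alpha_{\min}-\varepsilon)}$ and $\sum a_I^2\le\max a_I$;
\item truncation level $M=b^{\delta k}$;
\item threshold $t=b^{-k(\alpha_{\min}-3\delta)}$.
\end{itemize}
With $\lambda$ as in \eqref{eq:lambda}, i.e. $\lambda\sim b^{k(\alpha_{\min}-\varepsilon-\delta)}\,\delta k$, we get $\lambda t\gtrsim b^{k\delta}$ and $\lambda^2\sum a_I^2\lesssim b^{k(\alpha_{\min}-\varepsilon)}k^2b^{-2\delta k}\cdot$, which is smaller once $\varepsilon\ll\delta$. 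The truncation error is $N\Phi(M)\le b^{k}b^{-p\delta k}$.

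We then take a union bound over a $b^{-10k}$-grid of frequencies with $|\xi|\le b^{2k+2}$. There are polynomially many such points, and the Lipschitz bound $|\nabla\widehat{\mu}_k|\lesssim1$ passes from the grid to all $\xi$. Choosing $p$ large in terms of $\delta$ makes the probabilities summable in $k$, and Borel--Cantelli gives, almost surely for all large $k$ and all such $\xi$,
\[
|\widehat{\mu}_{k+1}(\xi)-\widehat{\mu}_k(\xi)|\le b^{-k(\alpha_{\min}-3\delta)}.
\]
Summing over $k\ge m$ gives $\lesssim b^{-m(\alpha_{\min}-3\delta)}\sim|\xi|^{-(\alpha_{\min}-3\delta)/2}$. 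This is the right range of $\xi$, since $b^{m}\sim|\xi|^{1/2}$ means $|\xi|\le b^{2k+2}$ for all $k\ge m$.

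Combining the two parts and letting $\varepsilon,\delta\to0$ gives the claim for every $\beta<\alpha_{\min}/2$. The step I expect to be the main obstacle is the deterministic term: one must make the van der Corput shell decomposition interact correctly with the almost-sure multiscale mass bounds of Lemma~\ref{lem:unif_pq}, uniformly over all directions of $\xi$. This includes handling the $O(1)$ critical points uniformly via compactness of the circle of directions and the $C^2$ control of the curvature, and absorbing possible logarithmic losses when $\alpha_{\min}$ is close to $1$. The probabilistic part is comparatively routine once Lemma~\ref{lem:conc} is available, the key point being that the crude bound $\sum a_I^2\le\max a_I$ already suffices.
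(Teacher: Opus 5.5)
\textbf{The gap is in your bound on the random increments.} Your overall architecture matches the paper's: split at $b^{m}\sim|\xi|^{1/2}$, bound $\widehat{\mu}_m(\xi)$ deterministically by van der Corput, and control the later increments by Lemma~\ref{lem:conc}. Your deterministic term is essentially Lemma~\ref{lem:large_f}. One small correction there: Lemma~\ref{lem:unif_pq} gives $\mu_k(I)\le b^{-j\alpha_{\min}+k\varepsilon}$ for $I\in\D_j$, so the loss is in $k$, not in $j$.

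The estimate
\[
|\widehat{\mu}_{k+1}(\xi)-\widehat{\mu}_k(\xi)|\le b^{-k(\alpha_{\min}-3\delta)}\qquad\text{for all } |\xi|\le b^{2k+2}
\]
is false in general. Conditionally on $\mathcal{F}_k$, the increment has variance $\sum_I\mu_k(I)^2\,\E|X_I|^2$. For $|\xi|\lesssim b^k$ the $X_I$ have no oscillatory gain, since each $b^k\int_{I_i}e^{-2\pi i x\cdot\xi}\,dx$ has modulus about $b^{-1}$. So for non-degenerate $W$ the increment is typically of size $(\sum_I\mu_k(I)^2)^{1/2}=b^{-k(\widetilde{\tau}(2)+o(1))/2}$.

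By concavity, $\widetilde{\tau}(p)\le p\,\alpha_{\min}$, hence $\widetilde{\tau}(2)\le 2\alpha_{\min}$, and the inequality is strict in general. For lognormal weights of small variance, $\widetilde{\tau}(2)\approx 1$ while $2\alpha_{\min}\approx 2$.

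No concentration inequality can beat the standard deviation, and your arithmetic reflects this. With $\lambda\approx\delta k\,b^{k(\alpha_{\min}-\varepsilon-\delta)}$ and $t=b^{-k(\alpha_{\min}-3\delta)}$ you get $\lambda t\approx\delta k\,b^{k(2\delta-\varepsilon)}$. Your own bound on $\sum a_I^2$ gives $\lambda^2\sum a_I^2\approx\delta^2k^2\,b^{k(\alpha_{\min}-\varepsilon-2\delta)}$, which is larger by a factor of about $\delta k\,b^{k(\alpha_{\min}-4\delta)}$. So the exponent in Lemma~\ref{lem:conc} is positive and the bound is vacuous.

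What the weights $a_I=\mu_k(I)$ with $\sum a_I^2\le\max a_I$ actually give, using $t=b^{2k\delta}(\sum a_I^2)^{1/2}$, is $b^{-k(\alpha_{\min}/2-O(\delta))}$ per level. Summing from $k=m$ yields only $|\xi|^{-\alpha_{\min}/4}$, which is half the required exponent.

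\textbf{The missing idea: build van der Corput into the random sum, as in Lemmas~\ref{lem:conc_app} and~\ref{lem:intrem_f}.} For the $O(\log|\xi|)$ levels $m\le k\le 2m$, write $|\xi|\sim b^{k+\kappa}$ with $0\le\kappa\le k$. If $I\in\mathcal{I}_\ell(\xi)$, then $b^k|\int_{I_i}e^{-2\pi i x\cdot\xi}\,dx|\lesssim b^{\ell-\kappa}$. So the effective weights are $a_I=b^{\ell-\kappa}\mu_k(I)$. Use $\max_{I\in\D_k}\mu_k(I)\le b^{-k\alpha_{\min}+k\varepsilon}$ and the localization $\mu_k(\mathcal{S}_\ell)\lesssim b^{-\ell\alpha_{\min}+k\varepsilon}$. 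Then
\[
\sum_I a_I^2\lesssim b^{-k\alpha_{\min}+2k\varepsilon}\sum_{\ell\le\kappa}b^{2(\ell-\kappa)-\ell\alpha_{\min}}\lesssim b^{2k\varepsilon}\,|\xi|^{-\alpha_{\min}},
\]
which gives $|\xi|^{-\alpha_{\min}/2+O(\varepsilon+\delta)}$ per level.

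For $k\ge 2m$, where $|\xi|\lesssim b^{k}$, the plain bound $b^{-k(\widetilde{\tau}(2)/2-O(\delta))}$ suffices. This is Lemma~\ref{lem:small_f}, obtained via Lemma~\ref{lem:unif_pq} with $p=2$. Summing from $2m$ gives $|\xi|^{-\widetilde{\tau}(2)/2+o(1)}$, and $\widetilde{\tau}(2)\ge\alpha_{\min}$.

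So the real difficulty is in the step you called routine, not in the deterministic term. The crude bound $\sum a_I^2\le\max a_I\sum a_I$ works only once the oscillatory factors $b^{2(\ell-\kappa)}$ and the localization near the stationary points are built into the weights.
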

This theorem implies that $\dim_F(\mu) \geq \alpha_{\min}$. We will consider different estimates according to the size of $|\xi|$.

\begin{lemma}\label{lem:small_f}
If $\beta<\widetilde{\tau}(2)$, then 
    $|\widehat{\mu_{n+1}}(\xi)-\widehat{\mu_n}(\xi)|\lesssim_\beta b^{-n\beta/2}$ for all $n$ and all $|\xi|\le b^{n}$, where the implicit constant is (random and) independent of $n$, $\xi$.
\end{lemma}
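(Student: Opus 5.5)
We write the increment as a sum over the level-$n$ arcs. For $I\in\Q_n$ with children $I_i$, $i\in\Lambda$ (here $d=1$, $|\Lambda|=b$), we have
\[
\widehat{\mu_{n+1}}(\xi)-\widehat{\mu_n}(\xi)=\sum_{I\in\Q_n}\nu_n(I)\,b^{n}\,X_I(\xi),\qquad X_I(\xi)=\sum_{i\in\Lambda}\bigl((W_{\iii})_i-1\bigr)\int_{\gamma(I_i)}e^{-2\pi i x\cdot\xi}\,dx\,,
\]
where $\iii$ is the word of $I$. Conditionally on $\mathcal F_n$ (the weights up to level $n$), the $X_I(\xi)$ are independent, have mean zero, and satisfy $|X_I|\le b^{-n}\sum_i|(W_\iii)_i-1|$. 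By \eqref{eq:moments}, $b^nX_I$ has tails $\lesssim t^{-p}$ for every $p$. Real and imaginary parts are treated separately, and each is handled as $\pm$ to get two-sided bounds.

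The plan is to apply Lemma~\ref{lem:conc} conditionally on $\mathcal F_n$, with $a_I=\nu_n(I)$ and $X_k=b^nX_I$. Then $\sum a_I^2=S(2,1,n,n)^2$ and $\max a_I=S(\infty,1,n,n)$. By Lemma~\ref{lem:unif_pq}, almost surely for large $n$ we have $\sum_I a_I^2\le b^{-n\widetilde\tau(2)+n\varepsilon}$ and $\max a_I\le b^{-n\alpha_{\min}+n\varepsilon}$. Note $\alpha_{\min}\ge\widetilde\tau(2)/2$, since $\widetilde\tau(p)/p$ is nonincreasing. Set $t=b^{-n\beta/2}$, $M=b^{n\delta}$ with small $\delta>0$, $N=b^n$, and choose $p$ huge so that $N\Phi(M)\le b^{n(1-p\delta)}$ is summable even after multiplying by the number of frequencies. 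In the exponent we want $\lambda t\gg1$ and $\lambda^2\sum a_I^2\lesssim\lambda t$. Instead of the specific $\lambda$ in \eqref{eq:lambda}, I would use any smaller $\lambda$; the proof of Lemma~\ref{lem:conc} only needs $\lambda a_k\le q\log M/M$. Take $\lambda\approx t/\sum a_I^2$, capped at $q\log M/(M\max a_I)$. If the first choice is admissible, the exponent is about $-t^2/\sum a_I^2= -b^{n(\widetilde\tau(2)-\beta-\varepsilon)}$, which is superpolynomially small since $\beta<\widetilde\tau(2)$. If the cap is active, the exponent is at most $-\tfrac12\lambda t\approx -b^{n(\alpha_{\min}-\beta/2-\delta-\varepsilon)}\log M$, which is also superpolynomially small because $\alpha_{\min}\ge\widetilde\tau(2)/2>\beta/2$. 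The bounds from Lemma~\ref{lem:unif_pq} are random but $\mathcal F_n$-measurable, so I would intersect with the event that they hold. That event has probability tending to one, and its failures are handled by Borel--Cantelli, exactly as in the proof of Lemma~\ref{lem:unif_pq}.

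Next, to pass to all $|\xi|\le b^n$, we take a grid of spacing $b^{-2n}$ in the ball, which has $O(b^{6n})$ points, and take a union bound; the probabilities above are summable in $n$ times $b^{6n}$. The difference function has gradient bounded by $2\pi(\mu_{n+1}(\Gamma)+\mu_n(\Gamma))\cdot\mathrm{diam}$. Its total mass is $\lesssim b^{n\varepsilon}$ almost surely, by Lemma~\ref{lem:unif_pq} with $p=q=1$. So moving from a grid point to a nearby $\xi$ costs $O(b^{-2n+n\varepsilon})\ll b^{-n\beta/2}$, because $\beta<\widetilde\tau(2)\le 2$. Borel--Cantelli then gives the bound for all large $n$, and the finitely many small $n$ are absorbed into the random constant.

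The main obstacle is balancing $M$, $p$ and $\lambda$ in Lemma~\ref{lem:conc} so that both terms are summable after the union bound over $b^{6n}$ frequencies. This is where the margin $\beta<\widetilde\tau(2)$ and the relation $\alpha_{\min}\ge\widetilde\tau(2)/2$ are used. Note that for $|\xi|\le b^n$ no oscillation (curvature) gain is needed; the bound comes purely from cancellation of the centered weights.
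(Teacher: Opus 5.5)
Your proof is correct and follows essentially the paper's route (Lemma~\ref{main_probab_lemma} together with Remark~\ref{rem:appendix}): the same conditional decomposition into centred independent terms, Lemma~\ref{lem:conc}, a union bound over a fine frequency grid, Lipschitz interpolation and Borel--Cantelli. The one difference is that the paper uses the self-normalised threshold $t=b^{2n\delta}S_n^{1/2}$ with $\lambda=t^{-1}b^{n\delta}$ and inserts the bound on $S_n$ only at the end, whereas you fix $t=b^{-n\beta/2}$ and work on the good event from Lemma~\ref{lem:unif_pq}. One small correction: $\widetilde{\tau}(p)/p$ is non\emph{decreasing}, not nonincreasing, because $q\tau'(q)\ge\tau(q)$ for $q\le q_{\max}$; this monotonicity is what gives $\alpha_{\min}\ge\widetilde{\tau}(2)/2$, and you can bypass it altogether with $\max_I\nu_n(I)\le(\sum_I\nu_n(I)^2)^{1/2}$, as the paper implicitly does.
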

We defer the proof of Lemma \ref{lem:small_f} to the appendix, see Lemma \ref{main_probab_lemma} (and Remark \ref{rem:appendix}), where a slightly more general version is obtained. We note that for Theorem \ref{FD_up}, we only need the bound for $\beta<\alpha_{\min}$. The full power of the lemma will be used for the spherical averages in Section \ref{sec:spherical}. Note also that the curvature assumption  $\det(\gamma'(t), \gamma''(t)) \neq 0$ is not used in the proof of Lemma \ref{lem:small_f}.

We will use the following, which is an immediate consequence of our curvature assumption and the Van der Corput lemma (see e.g. \cite[Theorem 14.2]{Mattila2015}).

\begin{lemma}\label{lem:Vander}
    Let $I\subset[0,1]$. Then
    \begin{equation*}
    \left|\int_{\gamma(I)}\exp\left(-2\pi i x\cdot\xi \right)\,dx\right|\lesssim |\xi|^{-1/2}\,.
\end{equation*}
    For $j \geq 1 $, assume that $b^{-j}|\xi| \leq |\gamma'(t) \cdot \xi|$ for all $t \in I$. Then
    \begin{equation*}
    \label{eq:VC2}\left|\int_{\gamma(I)}\exp\left(-2\pi i x\cdot\xi \right)\,dx\right|\lesssim b^j/|\xi|\,,    
    \end{equation*}
\end{lemma}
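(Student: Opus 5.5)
The first estimate is the standard second-derivative van der Corput bound. Write the integral in the parametrization, $\int_I e^{-2\pi i \phi(t)}\,dt$ with phase $\phi(t)=\gamma(t)\cdot\xi$. Then $\phi'(t)=\gamma'(t)\cdot\xi$ and $\phi''(t)=\gamma''(t)\cdot\xi$.

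Since $|\gamma'|=1$, we have $\gamma''\perp\gamma'$. The assumption $\det(\gamma',\gamma'')\neq0$ together with compactness gives $|\gamma''(t)|\ge\kappa>0$ on $[0,1]$. Decompose $\xi=(\xi\cdot\gamma'(t))\gamma'(t)+(\xi\cdot n(t))n(t)$, where $n(t)$ is the unit normal, so that $\gamma''(t)=\pm|\gamma''(t)|\,n(t)$. At each $t$ this gives
\[
|\gamma'(t)\cdot\xi|^2+|\gamma''(t)\cdot\xi|^2/\|\gamma''\|_\infty^2\ \ge\ |\xi|^2\min(1,\kappa^2/\|\gamma''\|_\infty^2)\,.
\]
So at every point either $|\phi'|\ge c|\xi|$ or $|\phi''|\ge c|\xi|$.

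By uniform continuity of $\gamma'$ and $\gamma''$, I would cover $[0,1]$ by a bounded number $K$ of intervals, with $K$ depending only on $\gamma$ and not on $\xi$ (the conditions scale in $|\xi|$). On each such interval, one of the following holds:
\begin{itemize}
\item $|\phi''|\ge c|\xi|/2$ throughout;
\item $|\phi'|\ge c|\xi|/2$ throughout, and $\phi'$ changes monotonicity at most a bounded number of times.
\end{itemize}
The second property can be arranged because on a short interval $\gamma'$ is nearly constant.

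Intersect these pieces with $I$. On the pieces of the first kind, van der Corput with $k=2$ (\cite[Theorem 14.2]{Mattila2015}) gives the bound $\lesssim |\xi|^{-1/2}$.

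On the pieces of the second kind I need monotonicity of $\phi'$ to apply the $k=1$ case. Since $\phi''$ is continuous, I would instead subdivide at points where the normal component $\gamma''\cdot\xi$ vanishes. On a short interval where $\gamma''$ has nearly constant direction, $\phi''=\gamma''\cdot\xi$ has a fixed sign unless $\xi$ is nearly tangent. If it has a fixed sign, $\phi'$ is monotone and the $k=1$ van der Corput bound gives $\lesssim |\xi|^{-1}\le|\xi|^{-1/2}$ for $|\xi|\ge1$.

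The small-frequency case $|\xi|\le1$ is trivial, since the integral is bounded by $|I|\le1$.

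I expect the main obstacle to be justifying monotonicity of $\phi'$ for the first-derivative test. The cleanest route is to avoid it: integrate by parts directly,
\[
\int e^{-2\pi i\phi}=\Big[\frac{e^{-2\pi i\phi}}{-2\pi i\phi'}\Big]+\int e^{-2\pi i \phi}\Big(\frac{1}{2\pi i\phi'}\Big)'\,dt\,.
\]
Then bound $|(1/\phi')'|=|\phi''|/|\phi'|^2\le\|\gamma''\|_\infty|\xi|/(c|\xi|)^2$. This gives $\lesssim 1/|\xi|$ on any interval where $|\phi'|\ge c|\xi|$, using only $C^2$ regularity. It is exactly the argument for the second statement, so I would do the second statement first.

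For the second statement, apply the same integration by parts on $I$ with the hypothesis $|\phi'|\ge b^{-j}|\xi|$. The boundary terms give $\lesssim b^j/|\xi|$. The remainder is bounded by
\[
\int_I \frac{|\phi''|}{|\phi'|^2}\le |I|\,\|\gamma''\|_\infty|\xi|\,b^{2j}|\xi|^{-2}\,.
\]
This is worse by a factor $b^j|I|$, which is only harmless if $|I|\lesssim b^{-j}$, presumably the intended use. To get a clean bound for general $I$, I would rely on monotonicity instead. Split $I$ into boundedly many subintervals on which $\phi''$ has constant sign, which is possible since $\gamma''$ rotates monotonically by the curvature assumption, so $\gamma''\cdot\xi$ has at most two zeros on $[0,1]$. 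On each subinterval, $\phi'$ is monotone with $|\phi'|\ge b^{-j}|\xi|$, and the $k=1$ van der Corput lemma gives $\lesssim b^j/|\xi|$. This step also settles the first statement.
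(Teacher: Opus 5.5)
Your proposal is correct and is essentially the paper's argument, which simply invokes the curvature assumption together with the $k=1$ and $k=2$ cases of van der Corput's lemma; your splitting of $I$ into boundedly many pieces on which $\phi''=\gamma''\cdot\xi$ has constant sign is exactly what makes the $k=1$ case applicable. One small correction: $\gamma''\cdot\xi$ has at most $1+\pi^{-1}\int_0^1|\gamma''|$ zeros rather than at most two, since the curve may turn by more than $2\pi$, but a bound depending only on $\gamma$ is all your argument needs.
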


We will use the following notation in the proofs of Lemmas \ref{lem:large_f} -- \ref{lem:intrem_f}, and \ref{lem:circ_decay_large}. Given $\xi$, let $\mathcal{I}_1=\mathcal{I}_1(\xi)$ consist of intervals $I\in \D_n$ such that $b^{-1} |\xi|  \leq \min\{ |\gamma'(t) \cdot \xi|: \gamma(t) \in I \}$. For $2 \leq j < n$, let $\mathcal{I}_j=\mathcal{I}_j(\xi)$ consist of those intervals  $I \in \D_n$ such that $b^{-j} |\xi| \leq \min\{ |\gamma'(t) \cdot \xi|: \gamma(t) \in I \} < b^{-j+1}  |\xi| $ and let $\mathcal{I}_n=\mathcal{I}_n(\xi)=\D_n\setminus\cup_{j=1}^{n-1}\mathcal{I}_j$. Note that $\cup_{I \in \mathcal{I}_j}I$ is contained in a union of $O(1)$-many arcs of length $\sim b^{-j}$. Let us denote the union of these arcs by $\mathcal{S}_j=\mathcal{S}_j(\xi)$.

Now, we can prove the estimates for $\widehat{\mu_n}(\xi)$ in different scales of $\xi$. Throughout this section, let us denote $\varepsilon_n = \varepsilon_{\infty,1,n }$ for notational convenience.

\begin{lemma}\label{lem:large_f}
    If $ \beta < \alpha_{\min}$, then $\widehat{\mu_n}(\xi)\lesssim_\beta |\xi|^{(\varepsilon_n-\beta)/2}$ for all $|\xi|\ge b^{2n}$, where the implicit constant is (deterministic and) independent of $n$ and $\xi$. 
\end{lemma}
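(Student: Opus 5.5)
Write $\widehat{\mu_n}(\xi)$ as a sum over the arcs of $\D_n$. The density of $\mu_n$ with respect to arc length is constant on each $I\in\D_n$ and equals $b^n\mu_n(I)$. Hence
\[
\widehat{\mu_n}(\xi)=\sum_{I\in\D_n} b^n\mu_n(I)\int_I e^{-2\pi i x\cdot\xi}\,dx\,.
\]
Group the arcs according to the classes $\mathcal{I}_j(\xi)$ and bound each class using Lemma \ref{lem:Vander} together with the definition of $\varepsilon_n=\varepsilon_{\infty,1,n}$.

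\textbf{Mass bound.} Here $d=1$ and $\E(\sum_i W_i)=b$, so $\tau(1)=\widetilde\tau(1)=0$. The definition of $\varepsilon_n$ then gives
\[
S(\infty,1,j,n)\le b^{-j\alpha_{\min}+n\varepsilon_n}\qquad\text{for all } 0\le j\le n.
\]
The set $\mathcal{S}_j$ is covered by $O(1)$ arcs of length $\sim b^{-j}$, hence by $O(1)$ elements of $\D_j$. Therefore $\sum_{I\in\mathcal{I}_j}\mu_n(I)\lesssim b^{-j\alpha_{\min}+n\varepsilon_n}$. In particular, each single $I\in\D_n$ has $\mu_n(I)\le b^{-n\alpha_{\min}+n\varepsilon_n}$.

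\textbf{Classes $j<n$.} For $I\in\mathcal{I}_j$, the second estimate of Lemma \ref{lem:Vander} gives $|\int_I e^{-2\pi i x\cdot \xi}dx|\lesssim b^j/|\xi|$. The contribution of these classes is therefore
\[
\lesssim \sum_{j<n} b^{n}\, b^{-j\alpha_{\min}+n\varepsilon_n}\, \frac{b^j}{|\xi|}\lesssim n\, b^{n(2-\alpha_{\min}+\varepsilon_n)}|\xi|^{-1},
\]
using $\alpha_{\min}\le 1$. Since $b^{2n}\le|\xi|$ and $2-\alpha_{\min}+\varepsilon_n\ge0$ (which should follow from $\mu_n\ne 0$; otherwise the bound is trivial), this is at most $n|\xi|^{(\varepsilon_n-\alpha_{\min})/2}$. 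The factor $n\lesssim\log|\xi|$ is absorbed by replacing $\alpha_{\min}$ with $\beta<\alpha_{\min}$.

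\textbf{Class $\mathcal{I}_n$.} For the remaining arcs I do not stop at scale $b^{-n}$. I continue the dyadic decomposition of the region $\{|\gamma'\cdot\xi|<b^{-n+1}|\xi|\}$ down to a scale $b^{-J}\sim|\xi|^{-1/2}$, which satisfies $J\ge n$. The density there is at most $b^{n}\max_I\mu_n(I)\le b^{n(1-\alpha_{\min}+\varepsilon_n)}$.

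On each shell $\{b^{-k}|\xi|\le|\gamma'\cdot\xi|<b^{-k+1}|\xi|\}$ with $n\le k<J$, intersect with each $I\in\D_n$. Each shell meets $O(1)$ arcs of length $b^{-k}$. On every resulting subinterval the density is constant, so Van der Corput gives $\lesssim b^k/|\xi|$ per piece. Summing over $k<J$ yields $\lesssim b^{n(1-\alpha_{\min}+\varepsilon_n)}b^J/|\xi|$. The innermost region has length $\lesssim b^{-J}$ and is bounded trivially by the density times its length. Both pieces together give
\[
\lesssim b^{n(1-\alpha_{\min}+\varepsilon_n)}|\xi|^{-1/2}\le |\xi|^{(1-\alpha_{\min}+\varepsilon_n)/2-1/2}=|\xi|^{(\varepsilon_n-\alpha_{\min})/2},
\]
using again $b^n\le|\xi|^{1/2}$ and the nonnegativity of the exponent.

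\textbf{Main obstacle.} The delicate step is this last one. A naive trivial bound on $\mathcal{I}_n$ only yields $b^{-n\alpha_{\min}}$, which is too weak when $|\xi|\gg b^{2n}$. One must exploit the piecewise constant density of $\mu_n$ below scale $b^{-n}$ and use the first Van der Corput bound only at the scale $|\xi|^{-1/2}$. The essential inputs there are that there are only $O(1)$ pieces per shell and that the exponent of $b^{n}$ is nonnegative.
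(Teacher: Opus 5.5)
Your proof is correct and follows the paper's argument: the same splitting into the classes $\mathcal{I}_j(\xi)$, the same mass bound $\mu_n(\mathcal{S}_j)\lesssim b^{-j\alpha_{\min}+n\varepsilon_n}$ from $\varepsilon_n=\varepsilon_{\infty,1,n}$ and $\widetilde{\tau}(1)=0$, the second estimate of Lemma \ref{lem:Vander} for $j<n$, and the same final use of $b^{2n}\le|\xi|$. That final step tacitly needs the exponents $1-\alpha_{\min}+\varepsilon_n$ and $2-\alpha_{\min}+\varepsilon_n$ to be nonnegative, in your proof as in the paper's; this is not implied by $\mu_n\neq 0$, but it is harmless for Theorem \ref{FD_up}, where $\varepsilon_n\to 0$.

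The only real difference is on $\mathcal{I}_n$, where your shell decomposition down to scale $|\xi|^{-1/2}$ just re-proves the first estimate of Lemma \ref{lem:Vander}. The paper instead applies that $|\xi|^{-1/2}$ bound directly to each of the $O(1)$ arcs $I\in\mathcal{I}_n$, on which the density $b^n\mu_n(I)$ is constant, and gets $b^n\mu_n(\mathcal{S}_n)|\xi|^{-1/2}$ at once. So the step you single out as the main obstacle is in fact immediate.
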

\begin{proof}
    Consider $\xi\in\R^2$, $|\xi|\ge b^{2n}$. Using Lemma \ref{lem:Vander} for each $I\in\D_n$ and summing over all intervals $I\in\mathcal{I}_j$ yields that if $1 \leq j <n$, then
\begin{align}\label{eq:large_f}
\left|\sum_{I\in\mathcal{I}_j}\int_I\exp\left(-2\pi i x\cdot\xi \right)\,d\mu_n(x)\right|&\lesssim b^{n+j}\mu_n( \mathcal{S}_j)/|\xi| \nonumber \\
&\lesssim b^{n+j(1-\alpha_{\min})+n\varepsilon_n}/|\xi|\,,
\end{align}
and 
\begin{align*}
\left|\sum_{I\in\mathcal{I}_n}\int_I\exp\left(-2\pi i x\cdot\xi \right)\,d\mu_n(x)\right|& \lesssim b^{n}\mu_n( \mathcal{S}_n)|\xi|^{-1/2}\\
& \lesssim b^{n(1-\alpha_{\min}+\varepsilon_n)}|\xi|^{-1/2}\,.
\end{align*}

The last inequality in each estimate follows from the definition of $\varepsilon_{p,q,n}$ and the facts that $\widetilde{\tau}(\infty)/\infty = \alpha_{\min}$ and $\widetilde{\tau}(1)=0$. Note that the sum of \eqref{eq:large_f} over $1 \leq j \leq n$ is $ \lesssim nb^{2-\alpha_{\min} +\varepsilon_n} |\xi|^{-1}
$ and we can drop $n$ if $ \alpha_{\min} < 1$. This yields
\[|\widehat{\mu_n}(\xi)|\lesssim\left(b^{n(1-\alpha_{\min}+\varepsilon_n)}|\xi|^{-1/2}+ n b^{n(2-\alpha_{\min}+\varepsilon_n)}|\xi|^{-1}\right) \lesssim_\beta |\xi|^{(\varepsilon_n-\beta)/2}\,,\]
proving the lemma. 
\end{proof}
In the intermediate scales, we combine the concentration inequality (Lemma \ref{lem:conc}) with van der Corput's lemma. Some elementary but technical parts of the proof are identical to those in the proof of Lemma \ref{main_probab_lemma}, and they are omitted here.

 For $t>0$, we let $A_t$ denote the union of all $I \in \D_n$ which intersect the closed $t$-neighbourhood of a set $A\subset\R^d$.
 
\begin{lemma}\label{lem:conc_app}
For any $\delta>0$, almost surely,
\[\left|\widehat{\mu_{n+1}}(\xi)-\widehat{\mu_n}(\xi)\right|\lesssim_\delta 
b^{2n \delta} \left( \sum_{1 \leq \ell \leq k} b^{2(\ell-k)} \sum_{\iii\in\Lambda^n,D_\iii\subseteq (\mathcal{S}_\ell(\xi))_{b^{1-n}}} \mu_n(D_\iii)^2 \right)^{1/2}+b^{-n}\,,\]
for all $1\le k\le n$ and $b^{n+k-1} \leq |\xi| \leq b^{n+k}$. Here, the implicit constant is random, but independent of $n,k$ and $\xi$.

\end{lemma}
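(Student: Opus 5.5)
We write $\widehat{\mu_{n+1}}(\xi)-\widehat{\mu_n}(\xi)$ as a sum of independent centred terms and then apply the concentration inequality, Lemma \ref{lem:conc}. For $\iii\in\Lambda^n$ and $i\in\Lambda$, let $D_{\iii i}\in\D_{n+1}$ be the children of $D_\iii$. Since $\mu_{n+1}$ has density $\mu_n(D_\iii)b^{n}(W_\iii)_i$ on $D_{\iii i}$, we get
\[
\widehat{\mu_{n+1}}(\xi)-\widehat{\mu_n}(\xi)=\sum_{\iii\in\Lambda^n}\mu_n(D_\iii)\,b^{n}\sum_{i\in\Lambda}\big((W_\iii)_i-1\big)\int_{D_{\iii i}}e^{-2\pi i x\cdot\xi}\,dx .
\]
Condition on $\mathcal F_n$, the $\sigma$-algebra generated by the first $n$ levels. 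Then
\[
X_\iii:=\frac{b^n\sum_i((W_\iii)_i-1)\int_{D_{\iii i}}e^{-2\pi i x\cdot\xi}\,dx}{c_\iii}
\]
are independent and centred. Here $c_\iii:=b^n\max_i\left|\int_{D_{\iii i}}e^{-2\pi i x\cdot\xi}\,dx\right|$ is a deterministic normaliser. By \eqref{eq:moments}, the tails of $X_\iii$ satisfy $\Phi(t)\lesssim t^{-p}$ for every $p$, uniformly in $\iii$, $n$, $\xi$. We treat real and imaginary parts separately, and apply the lemma to $X$ and $-X$. Setting $a_\iii=\mu_n(D_\iii)c_\iii$, the quantity to control is $\sum_\iii a_\iii X_\iii$.

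Next we bound the weights $c_\iii$ using Lemma \ref{lem:Vander}. Fix $b^{n+k-1}\le|\xi|\le b^{n+k}$ and suppose $D_\iii\subset(\mathcal S_\ell(\xi))_{b^{1-n}}$. The children $D_{\iii i}$ have length $b^{-n-1}$, and on them $|\gamma'\cdot\xi|\gtrsim b^{-\ell}|\xi|$ (this is why the neighbourhood is taken). So the second bound of Lemma \ref{lem:Vander} gives $c_\iii\lesssim b^{n}\,b^{\ell}/|\xi|\lesssim b^{\ell-k}$. For $\ell\ge k$ the trivial bound $c_\iii\le 1$ is already as good as $b^{\ell-k}\wedge 1$, and there the first Van der Corput bound $b^n|\xi|^{-1/2}$ may help. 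Arranging indices, we expect $c_\iii\lesssim b^{\ell-k}$ for $\ell\le k$ and $c_\iii\lesssim 1$ otherwise. Matching the statement's form $b^{2(\ell-k)}$ requires carrying out this bookkeeping of $\mathcal S_\ell$ versus the Van der Corput estimates exactly as in Lemma \ref{main_probab_lemma}. The main obstacle is this bookkeeping: justifying that every $\iii$ is covered by some $\ell\le k$ with the claimed weight, with the range $\ell>k$ absorbed by the $\ell=k$ term, since $\mathcal S_\ell$ for large $\ell$ shrinks to $O(1)$ arcs of length $b^{-\ell}$. Summing gives $\sum_\iii a_\iii^2\lesssim V:=\sum_{\ell\le k}b^{2(\ell-k)}\sum_{D_\iii\subseteq(\mathcal S_\ell)_{b^{1-n}}}\mu_n(D_\iii)^2$.

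Now apply Lemma \ref{lem:conc} conditionally, with $N\le b^{n}$. Choose $M=b^{n\delta}$ and $p$ large, so that $N\Phi(M)\le b^{n}b^{-pn\delta}\le b^{-3n}$. Then $\lambda\sim n\delta/(M\max a_\iii)$. Take $t=b^{2n\delta}V^{1/2}+b^{-n}$ and use $\max_\iii a_\iii\le V^{1/2}$. The choice of $\lambda$ is free up to the upper limit \eqref{eq:lambda}; a smaller $\lambda$ is allowed since the moment bound holds for $a\le q\log M/M$. So take $\lambda=\min\{q\log M/(M\max a),\ t/(CV)\}$, which makes the exponent at most $-c\min\{\lambda t,\ t^2/V\}$. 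Both quantities are at least $c\,b^{n\delta}$, because $t/\max a\ge b^{2n\delta}$ and $t^2/V\ge b^{4n\delta}$. Hence the conditional probability that $|\widehat{\mu_{n+1}}(\xi)-\widehat{\mu_n}(\xi)|>t$ is at most $b^{-3n}+\exp(-c b^{n\delta})$.

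Finally, pass to all $\xi$. Use a grid of $\xi$ with spacing $b^{-2n}$ in the annulus $|\xi|\le b^{2n}$, which has $O(b^{8n})$ points, and take $p$ larger so the failure probability is $\lesssim b^{-10n}$. Both $\widehat{\mu_n}$ and $\widehat{\mu_{n+1}}$ are $O(b^{2n}\cdot b^{-2n}\cdot\text{mass})$-Lipschitz at that scale, so off-grid points cost $O(b^{-n})$. The sets $\mathcal S_\ell(\xi)$ vary continuously, and the $b^{1-n}$-neighbourhood absorbs the perturbation. Taking a union bound over the grid and over $k\le n$, then applying Borel–Cantelli in $n$ and integrating out the conditioning, gives the almost sure bound with a random constant.
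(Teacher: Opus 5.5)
Your proposal follows the paper's proof: you use the same decomposition $\widehat{\mu_{n+1}}(\xi)-\widehat{\mu_n}(\xi)=\sum_\iii a_\iii X_\iii$ with weights $a_\iii\lesssim b^{\ell-k}\mu_n(D_\iii)$ from Lemma \ref{lem:Vander}, then a conditional application of Lemma \ref{lem:conc} with $t\approx b^{2n\delta}V^{1/2}$ and $M\approx b^{n\delta}$, then a union bound over a polynomial grid, Borel--Cantelli, and the uniform Lipschitz bound coming from $\sup_n\mu_n(\Gamma)<\infty$. The differences are cosmetic: you fix $M$ and take $\lambda$ as a minimum, while the paper fixes $\lambda=t^{-1}b^{n\delta}$ and solves for $M$, and you use a finer grid.

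The bookkeeping you flag as the main obstacle is settled directly by the definitions, but your stated reason is wrong. Assign $D_\iii$ to $\ell$ when $D_\iii\in\mathcal{I}_\ell(\xi)$ with $\ell<k$. Assign it to $\ell=k$ when $D_\iii\in\bigcup_{\ell\ge k}\mathcal{I}_\ell(\xi)$; these intervals lie in $O(1)$ arcs of length $\sim b^{-k}$ around the tangency points, which can be absorbed into $\mathcal{S}_k(\xi)$. This gives $c_\iii\lesssim b^{\ell-k}$, by Van der Corput for $\ell<k$ and by the trivial bound for the lumped term, and it also gives $D_\iii\subseteq\mathcal{S}_\ell(\xi)$.

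Membership in $(\mathcal{S}_\ell(\xi))_{b^{1-n}}$ does not, in general, give the lower bound $|\gamma'\cdot\xi|\gtrsim b^{-\ell}|\xi|$; for $\ell$ near $n$ that neighbourhood can reach a tangency point. The neighbourhood is needed only when passing from grid points $\xi'$ to arbitrary $\xi$, via $\mathcal{S}_\ell(\xi')\subset(\mathcal{S}_\ell(\xi))_{b^{1-n}}$.
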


\begin{proof}
    For $1 \leq \ell \leq k$ and for each $D_\iii \in \mathcal{I}_\ell$, let 
    $$
    a_{\iii,\xi} = b^{\ell-k} \mu_n(D_\iii)
    $$
    and
    $$
    X_{\iii,\xi} =  b^{n+k-\ell} \sum_{j \in \Lambda} ((W_\iii)_j-1) \int_{D_{\iii,j}} \exp(-2\pi i x \cdot \xi) dx
    $$
    where  $(W_\iii)_j$ denotes an independent copy of $W$ and $D_{\iii,j} = \gamma(Q_{\iii,j})$ where $Q_{\iii,j} = x_{(\iii,j)} + [0,b^{-(n+1)})$ with $(\iii,j) = (i_1,i_2, \cdots, i_n, j)$ for each $\iii \in \Lambda^n$ and $j \in \Lambda$. Then, we can write
    $$
    \widehat{\mu_{n+1}} (\xi) - \widehat{\mu_n}(\xi)  = \sum_{\iii \in \Lambda^n}  a_{\iii,\xi} X_{\iii, \xi}.
    $$
    Lemma \ref{lem:Vander} implies that
    $$
     b^{n+k-\ell} \int_{D_{\iii,j}} \exp(-2\pi i x \cdot \xi) dx \lesssim 1
    $$
    uniformly in $D_{\iii,j}, n,k$ and $\ell$. Also, \eqref{eq:moments} implies that
    \begin{equation*} 
    \PP\left(\max_{j\in\Lambda} W_j>t+1\right)\lesssim_p t^{-p}
    \end{equation*}
    for all $0<p<\infty$. Thus,
    $$
    \PP(X_{\iii,\xi} >t) \lesssim_p t^{-p}
    $$
    for all $X_{\iii,\xi}$ when $t >1$. For each $n$ and $k$, denote
    $$
    S_{n,k,\xi} = \sum_{0 \leq \ell \leq k} b^{2(\ell -k)} \sum_{D_\iii \subseteq \mathcal{S}_\ell(\xi)} \mu_n(D_\iii)^2.
    $$
    We may use Lemma \ref{lem:conc} with $N=b^{n+1}$, $q=1$, $t = b^{2n\delta} S_{n,k,\xi}^{1/2}$ and $\lambda = t^{-1}b^{n\delta}$ and conclude (see the proof of Lemma \ref{main_probab_lemma}) that 
    $$
    \PP(|\widehat{\mu_{n+1}}(\xi) - \widehat{\mu_n}(\xi)| \geq b^{2n\delta}S_{n,k,\xi}^{1/2} ) \leq 2b^{n+1} \Phi(M) + 2\exp (-b^{n\delta }  + O(b^{-2n\delta}))
    $$
    where
    $$\frac{\log(M)}{M } = \lambda \max\{a_{\iii,\xi}\,:\,\iii\in\Lambda_n\} \lesssim b^{-n\delta}. $$
    Choosing $p > 7/\delta$, this implies that
\begin{align*}
    \PP\left(|\widehat{\mu_{n+1}}(\xi)-\widehat{\mu_n}(\xi)|\ge b^{2n\delta}S_{n,k,\xi}^{1/2} \text{ for some }\xi\in b^{-n}\mathbb{Z}^2\,,b^{n} \le|\xi|\le b^{2n}\right) \lesssim_\delta b^{-cn} 
\end{align*}
for some $c>0$.
    The Borel-Cantelli lemma implies that almost surely 
    \begin{equation} \label{eq:mu_K}
     |\widehat{\mu_{n+1}}(\xi) - \widehat{\mu_n}(\xi)| \lesssim_\delta b^{2n\delta}S_{n,k,\xi}^{1/2},   
    \end{equation}
    for all $\xi \in b^{-n} \Z^2$. Thus, for any $b^{n+k-1} \leq |\xi| \leq b^{n+k}$, there exists $\xi' \in b^{-n}\Z$ such that $|\xi -\xi'|_\infty \leq b^{-n}$ and \eqref{eq:mu_K} holds for $\xi'$.

    For $\Gamma=\gamma([0,1])$, $\mu_n(\Gamma)$ is a martingale with $\E(\mu_n(\Gamma)) =1$. Hence, $\sup_n \mu_n ( \Gamma)< \infty$ almost surely. This implies that $\widehat{\mu_n}(\xi)$ is Lipschitz with a Lipschitz constant independent of $n$ and using this, \eqref{eq:mu_K}, and the fact $\mathcal{S}_\ell(\xi')\subset(\mathcal{S}_\ell(\xi))_{b^{1-n}}$, if $|\xi-\xi'|_\infty\le b^{-n}$, we obtain the desired conclusion for all $b^{n+k-1} \leq |\xi| \leq b^{n+k}$. 
\end{proof}

\begin{lemma}\label{lem:intrem_f}
    Let $\beta<\alpha_{\min}$. Then, almost surely,
\[|\widehat{\mu_{n+1}}(\xi)-\widehat{\mu_n}(\xi)| \lesssim_\beta |\xi|^{-\beta/2}\]
for all $n$ and all $b^n\le|\xi|\le b^{2n}$, where the implicit constant is random, but independent of $n$ and $\xi$.
\end{lemma}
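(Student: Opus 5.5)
The plan is to apply Lemma \ref{lem:conc_app} with some small $\delta>0$. Then it remains to bound the quantity
\[
\Sigma_{n,k}(\xi):=\sum_{1\le \ell\le k} b^{2(\ell-k)} \sum_{D_\iii\subseteq (\mathcal{S}_\ell(\xi))_{b^{1-n}}} \mu_n(D_\iii)^2
\]
for $b^{n+k-1}\le|\xi|\le b^{n+k}$, $1\le k\le n$. The goal is
\[
\Sigma_{n,k}(\xi)\lesssim b^{O(n\varepsilon)}\, b^{-(n+k)\beta'}
\]
for any $\beta'<\alpha_{\min}$. Since $|\xi|\sim b^{n+k}$ and $n\le n+k\le 2n$, the factors $b^{2n\delta}$ and $b^{O(n\varepsilon)}$ are then absorbed into $|\xi|^{-(\beta'-\beta)/2}$ by taking $\delta,\varepsilon$ small relative to $\alpha_{\min}-\beta$. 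The additive term $b^{-n}\le |\xi|^{-1/2}\le|\xi|^{-\beta/2}$ is harmless, because $\alpha_{\min}\le 1$.

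To bound the inner sum, I use that $(\mathcal{S}_\ell(\xi))_{b^{1-n}}$ is covered by $O(1)$ arcs of length $\sim b^{-\ell}$ (as $b^{1-n}\lesssim b^{-\ell}$), hence by $O(1)$ elements of $\D_\ell$. For each such $I\in\D_\ell$,
\[
\sum_{J\in\D_n,J\subset I}\mu_n(J)^2\le \Bigl(\max_{J}\mu_n(J)\Bigr)\,\mu_n(I).
\]
I bound $\mu_n(I)$ by $S(\infty,1,\ell,n)\le b^{-\ell\alpha_{\min}+n\varepsilon_{\infty,1,n}}$, since $\widetilde\tau(1)=0$. I bound $\max_J\mu_n(J)$ by $S(\infty,\infty,\cdot,n)=S(\infty,1,n,n)\le b^{-n\alpha_{\min}+n\varepsilon_n}$. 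Lemma \ref{lem:unif_pq} guarantees $\varepsilon_n\to0$ almost surely. Alternatively, one could use $S(\infty,2,\ell,n)^2\le b^{-2\ell\alpha_{\min}-2(n-\ell)\widetilde\tau(2)/2+2n\varepsilon_{\infty,2,n}}$, but the cruder bound suffices. This gives
\[
\Sigma_{n,k}(\xi)\lesssim b^{2n\varepsilon_n}\sum_{\ell=1}^k b^{2(\ell-k)}b^{-n\alpha_{\min}-\ell\alpha_{\min}}.
\]
Since $2-\alpha_{\min}>0$, the sum over $\ell$ is dominated by $\ell=k$, giving $\lesssim b^{2n\varepsilon_n} b^{-(n+k)\alpha_{\min}}$, which is the desired bound. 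The case $\ell=k$ includes the region near the tangency points, where there is no van der Corput gain.

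The main point requiring care is uniformity. The random constant from Lemma \ref{lem:conc_app} and the convergence $\varepsilon_n\to0$ are both almost-sure statements independent of $\xi$. For small $n$ (before $\varepsilon_n<\varepsilon$), the finitely many cases are absorbed into the random constant, using $|\widehat{\mu_{n+1}}-\widehat{\mu_n}|\le \mu_{n+1}(\Gamma)+\mu_n(\Gamma)$, which is bounded by the martingale argument, together with $|\xi|\le b^{2n}$. I expect the only delicate step to be verifying that the enlarged set $(\mathcal{S}_\ell)_{b^{1-n}}$ still meets only $O(1)$ intervals of $\D_\ell$ for all $\ell\le k\le n$. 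This follows since $b^{1-n}\le b\cdot b^{-\ell}$.
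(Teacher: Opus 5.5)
Your proposal is correct and follows the paper's proof essentially step for step. You apply Lemma \ref{lem:conc_app}, bound each inner sum by $\max_J\mu_n(J)$ times the mass of $O(1)$ intervals of $\D_\ell$ (both controlled through $\varepsilon_n=\varepsilon_{\infty,1,n}$), sum the geometric series in $\ell$ to get $b^{2n\varepsilon_n}|\xi|^{-\alpha_{\min}}$, and absorb $b^{n(2\delta+\varepsilon_n)}$ and the $b^{-n}$ term using $b^n\le|\xi|\le b^{2n}$ and $\alpha_{\min}\le 1$.
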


\begin{proof}
Let $0<\delta<(\alpha_{\min}-\beta)/4$, $1\le k\le n$ and $b^{n+k-1}\le|\xi|\le b^{n+k}$. Recall that $\mathcal{S}_\ell=\mathcal{S}_\ell(\xi)$ is a union of $O(1)$-many arcs of length $\sim b^{-\ell}$. Thus, we have
\[
\begin{split}
\sum_{\iii\in\Lambda^n,D_\iii \subseteq (\mathcal{S}_\ell(\xi))_{b^{1-n}}} \mu_n(D_\iii)^2 &\leq  b^{-n\alpha_{\min} +n \varepsilon_n} \sum_{\iii\in\Lambda^n,D_\iii \subseteq (\mathcal{S}_\ell(\xi))_{b^{1-n}}} \mu_n(D_\iii) \\
&\lesssim b^{-(n+\ell) \alpha_{\min} + 2n\varepsilon_n} \,.
\end{split}
\]
Therefore, 
\begin{equation}\label{eq:S_ell_sum}
    \sum_{0 \leq \ell \leq k} b^{2(\ell -k)} \sum_{\iii\in\Lambda^n,D_\iii \subseteq (\mathcal{S}_\ell(\xi))_{b^{1-n}}} \mu_n(D_\iii)^2\lesssim b^{2n\varepsilon_n-(n+k)\alpha_{\min}}\sim b^{2n\varepsilon_n}|\xi|^{-\alpha_{\min}}\,.
\end{equation}
Since $|\xi|\leq b^{2n}$ and $\alpha_{\min} \leq 1$,  we also have $b^{-n} \lesssim |\xi|^{-\alpha_{\min}/2}$. Thus, combining \eqref{eq:S_ell_sum} and Lemma \ref{lem:conc_app}, it follows that
\begin{align*}
    |\widehat{\mu_{n+1}}(\xi)-\widehat{\mu_n}(\xi)|
   \lesssim |\xi|^{-\alpha_{\min}/2 }b^{n(2\delta+ \varepsilon_n)}.
\end{align*}
This implies the claim since Lemma \ref{lem:unif_pq} applied with  $p =\infty$ and $q=1$ yields $\varepsilon_n\longrightarrow 0$ as $n\to\infty$, almost surely.
\end{proof}

Now, we are ready to prove Theorem \ref{FD_up}.
\begin{proof}[Proof of Theorem \ref{FD_up}]
    Let $0<\beta<\beta'<\alpha_{\min}$. Using Lemmas \ref{lem:small_f} and \ref{lem:intrem_f}, 
    it follows that for some random constant $K>0$,
    $|\widehat{\mu_{n+1}}(\xi)-\widehat{\mu_n}(\xi)|\le K b^{-\beta n/2}$ for all $n\in\N$ and all $|\xi|\le b^n$ and, moreover, that
$|\widehat{\mu_{n+1}}(\xi)-\widehat{\mu_n}(\xi)|\le K |\xi|^{-\beta'/2}$ if $b^n\le|\xi|\le b^{2n}$.

Consider $\xi$ with $b^{2j}\le |\xi|\le b^{2j+2}$, $j\in\mathbb{N}$ and let $m\ge 2j$. Applying Lemma \ref{lem:large_f}, we get
\begin{align*}
    &|\widehat{\mu_j}(\xi)|+\sum_{n=j}^m|\widehat{\mu_{n+1}}(\xi)-\widehat{\mu_n}(\xi)|\\
    &=|\widehat{\mu_j}(\xi)|+\sum_{n=j}^{2j}|\widehat{\mu_{n+1}}(\xi)-\widehat{\mu_n}(\xi)|+\sum_{n=2j}^m|\widehat{\mu_{n+1}}(\xi)-\widehat{\mu_n}(\xi)|\\
    &\lesssim_\beta |\xi|^{(\varepsilon_j-\beta)/2}+|\xi|^{-\beta/2}+|\xi|^{-\beta/2}\,,
\end{align*}
where we have used that $j\lesssim\log|\xi|\lesssim|\xi|^{(\beta'-\beta)/2}$.
Noting that the implicit constant is independent of $\xi$, that $j\to\infty$ as $|\xi|\to\infty$, and that $\varepsilon_j\to 0$ as $j\to\infty$ (by Lemma \ref{lem:unif_pq} applied with $p=\infty$ and $q=1$), we get
$|\widehat{\mu}(\xi)|=\lim_{m\to\infty}|\widehat{\mu_m}(\xi)|\lesssim_\beta |\xi|^{-\beta/2}$.
\end{proof}

\section{The upper bound of the Fourier dimension}\label{sec:ub}
The upper bound on the Fourier dimension, $\dim_F\mu\le \alpha_{\min}$, is a general fact valid for all measures supported on sufficiently regular curves. The following lemma is likely known, but we provide a proof since we have not found the result in the literature. For each $\theta \in S^{d-1}$, we let $P_\theta: \R^d \rightarrow \R$ denote the projection
\[
P_\theta(x) = x \cdot \theta\,,
\]
and let $\eta_\theta$ be the image measure defined as $\eta_\theta(B) =\eta(P_\theta^{-1} (B))$ for $B\subset\R$.

\begin{lemma}\label{lem:FD_ub}
    Let $\eta$ be a finite Borel measure supported on a $C^2$-curve $\Gamma\subset\R^2$ defined by $\gamma(t): [0,1] \rightarrow \mathbb{R}^2$ with $\det(\gamma'(t),\gamma''(t)) \neq 0$. For all $x\in\spt\eta$,
    \[\dim_F\eta\le\dim(\eta,x)\,.\]     
\end{lemma}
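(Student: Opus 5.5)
The plan is to convert Fourier decay of $\eta$ in a single well-chosen direction into a mass bound for small balls centred at $x$. We project onto the normal line of $\Gamma$ at $x$: under this projection a ball of radius $r$ is squeezed into an interval of length $\sim r^2$. Fix $x=\gamma(t_0)\in\spt\eta$ and $0<s<\dim_F\eta$ (we may assume $\dim_F\eta>0$, so $s<2$). Then $|\widehat{\eta}(\xi)|\le C|\xi|^{-s/2}$ for $|\xi|\ge 1$, and trivially $|\widehat\eta|\le\eta(\R^2)$. It suffices to show $\eta(B(x,r))\lesssim r^{s}$ for small $r$. This gives $\dim(\eta,x)\ge s$, and letting $s\uparrow\dim_F\eta$ proves the lemma.

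\textbf{Geometric step.} Let $\theta\in S^1$ be the unit normal to $\Gamma$ at $x$, so that $\theta\cdot\gamma'(t_0)=0$. Since $|\gamma'|=1$, the curve is injective near $t_0$ and its arcs away from $t_0$ stay a positive distance from $x$. Hence, for $r$ small, every $y=\gamma(t)\in B(x,r)\cap\Gamma$ has $|t-t_0|\lesssim r$. Taylor's theorem with $\gamma\in C^2$ then gives
\[|P_\theta(y)-P_\theta(x)|=\Bigl|\tfrac12\theta\cdot\gamma''(\zeta)(t-t_0)^2\Bigr|\le C_\gamma r^2 .\]
Therefore $\eta(B(x,r))\le\eta_\theta(J)$, where $J$ is the interval centred at $P_\theta x$ of length $\rho:=2C_\gamma r^2$. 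Other parts of $\Gamma$ may also project into $J$, but that only enlarges the right-hand side, which is harmless. The curvature assumption is in fact not needed for this step; it only ensures the setting of the paper.

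\textbf{Fourier step.} Note that $\widehat{\eta_\theta}(u)=\widehat{\eta}(u\theta)$, so $|\widehat{\eta_\theta}(u)|\lesssim(1+|u|)^{-s/2}$. Fix a nonnegative Schwartz function $\phi$ on $\R$ with $\phi\ge 1$ on $[-1,1]$; for instance, take $\phi=|\check\psi|^2$ suitably rescaled, with $\psi$ compactly supported. Set $\phi_\rho(y)=\phi((y-P_\theta x)/\rho)$. By Parseval,
\[\eta_\theta(J)\le\int\phi_\rho\,d\eta_\theta=\int_{\R}\overline{\widehat{\phi_\rho}(u)}\,\widehat{\eta_\theta}(u)\,du,\qquad |\widehat{\phi_\rho}(u)|=\rho|\widehat\phi(\rho u)|.\]
We split the integral at $|u|=1/\rho$. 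On $|u|\le 1/\rho$ we get $\lesssim\rho\int_{|u|\le1/\rho}(1+|u|)^{-s/2}du\lesssim\rho\cdot\rho^{s/2-1}=\rho^{s/2}$; this uses $s<2$. On $|u|>1/\rho$, the rapid decay of $\widehat\phi$ gives $\lesssim\rho\int_{|u|>1/\rho}(\rho|u|)^{-2}|u|^{-s/2}du\lesssim\rho^{s/2}$. Hence $\eta(B(x,r))\lesssim\rho^{s/2}\sim r^{s}$, as required.

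The main point needing care is the geometric step: one must check that only the local arc near $t_0$ meets $B(x,r)$, via injectivity of $\gamma$ and compactness. The Fourier estimate itself is a routine computation.
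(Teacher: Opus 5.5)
Your proof is correct, but it takes a different route from the paper's.

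\textbf{The paper's route.} The paper never bounds $\eta(B(x,r))$ from above directly. It chains $\dim_F\eta\le\dim_F\eta_\theta\le\dim_2\eta_\theta$, quoting the general inequality $\dim_F\le\dim_2$. It then proves the Fourier-free inequality $\dim_2\eta_\theta\le\dim(\eta,x)$ using the energy characterization of the correlation dimension (Hunt--Kaloshin). The key estimate is $\iint|u-v|^{-t}\,d\eta_\theta(u)\,d\eta_\theta(v)\gtrsim r^{-t}\eta\bigl(B(x,cr^{1/2})\bigr)^2$, applied along scales $r\to0$ at which $\eta\bigl(B(x,cr^{1/2})\bigr)>r^{s'/2}$ for a fixed $s'>\dim(\eta,x)$.

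\textbf{Your route.} You inline the harmonic analysis instead. The decay $|\widehat{\eta_\theta}(u)|\lesssim(1+|u|)^{-s/2}$ with $s/2<1$ gives, via a nonnegative Schwartz majorant and Parseval, the Frostman bound $\eta_\theta(J)\lesssim|J|^{s/2}$ for intervals $J$. The same slab geometry as in the paper then finishes: in the normal direction, $B(x,r)\cap\Gamma$ projects into an interval of length $O(r^2)$.

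\textbf{What each buys.} Your version is self-contained and slightly stronger. It gives $\eta(B(x,r))\le Cr^{s}$ for all small $r$, with $C$ independent of $x$. That is a uniform Frostman estimate, rather than information along a subsequence of scales. The paper's version is shorter given the quoted facts, and it isolates a purely geometric statement about $\eta_\theta$. You are right that only the bound on $\gamma''$ is used, not the nonvanishing curvature. The same is true of the paper's containment $\Gamma\cap B(x',r^{1/2})\subset P_\theta^{-1}(B(x,Cr))$.

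\textbf{One caveat}, which the paper shares (it speaks of \emph{the} unit normal at $x'$). The condition $|\gamma'|=1$ gives only local injectivity. So your claim that arcs away from $t_0$ stay away from $x$ tacitly assumes $\Gamma$ is simple. If $\gamma$ may pass through $x$ more than once, split $B(x,r)\cap\Gamma$ into the finitely many local arcs through the preimages of $x$. Then apply your estimate to each arc, using the normal at the corresponding preimage.
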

The estimate $\dim_F \mu \leq \alpha_{\min}$ is an immediate corollary to Lemma \ref{lem:FD_ub}.
\begin{proof}
    Let $x'\in\spt\eta$. We will show that $\dim_2 \eta_\theta\le\dim(\eta,x')$, where $\theta\in S^{1}$ is the unit normal of $\Gamma$ at $x'$. Since $\dim_F \eta \leq \dim_H \eta \leq  1$, the identity
    \[
\widehat{\eta_\theta}( \xi) = \widehat{\eta} ( \xi \theta)\,,
\]
    implies that $\dim_F\eta\le\dim_F\eta_\theta $. Then, it follows that $$\dim_F\eta\le\dim_F\eta_\theta\le\dim_2\eta_\theta\le\dim(\eta,x').$$

    Let $t>s>\dim(\eta,x')$. It suffices to show that $\dim_2\eta_\theta\le t$. To that end, we recall that
    \[\dim_2\eta_\theta=\sup\left\{0\le h< 1\,:\,\int\int  |x-y|^{-h} d\eta_\theta(x)d\eta_\theta(y)<\infty\right\}\,,\]
    see e.g. \cite[Proposition 2.1]{HuntKaloshin}.
    Since $\det(\gamma'(t),\gamma''(t))>0$ and $\theta$ is the unit normal of $\Gamma$ at $x'$, there is $1 \leq C<\infty$, such that $\Gamma\cap B(x',r^{1/2})\subset P_\theta^{-1}(B(x,Cr))$ for all $0<r<1$, and for all $x\in B(P_\theta(x'),r))\subset\R$. Now
 \[
    \begin{split}
        \iint |x-y|^{-t} d\eta_\theta(x) d\eta_\theta(y)
        &\gtrsim r^{-t} \int \eta_\theta(B(x,Cr)) d\eta_\theta(x)\,,\\
        &\ge r^{-t}\int_{B(P_\theta(x'),r)} \eta(B(x',r^{1/2})) d\eta_\theta(x)\\
        &\gtrsim r^{-t}\eta\left(B(x',C^{-1/2}r^{1/2})\right)^2
    \end{split}
    \]
    for all $0<r<1$. Now, there are arbitrarily small values $0<r<1$, such that $\eta(B(x',C^{-1/2}r^{1/2}))>r^{s/2}$, and for these values of $r$, we thus have
    \begin{align*}
        \iint |x-y|^{-t} d\eta_\theta(x) d\eta_\theta(y) &\gtrsim r^{s-t}\,,
    \end{align*}
implying that $\dim_2\eta_\theta\le t$.
\end{proof}

\section{Decay of the spherical average}\label{sec:spherical}
Recall the definition of the spherical average $\sigma_p$ from \eqref{def:sp}.
\begin{theorem}\label{thm:circ_decay}
    Let $\beta <  \min \{ \widetilde{\tau}(2) , (1+\widetilde{\tau}(p))/p \} $. For $1\leq p \leq\infty$, almost surely we have
    $$
    \sigma_p(\mu)(r) \lesssim_\beta r^{-\beta/2}.
    $$
\end{theorem}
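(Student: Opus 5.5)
We follow the scheme of the proof of Theorem~\ref{FD_up}: telescope $\widehat{\mu}(r\theta)=\widehat{\mu_j}(r\theta)+\sum_{n\ge j}(\widehat{\mu_{n+1}}-\widehat{\mu_n})(r\theta)$ with $b^{2j}\le r\le b^{2j+2}$, and bound the $L^p(d\sigma)$ norm of each piece by Minkowski's inequality. The pieces are handled as follows.

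\emph{Small frequencies.} Take $n\ge 2j$, so that $r\le b^n$. Lemma~\ref{lem:small_f} gives $|\widehat{\mu_{n+1}}-\widehat{\mu_n}|\lesssim b^{-n\beta/2}$ uniformly in $\theta$, for any $\beta<\widetilde\tau(2)$. Summing over $n\ge 2j$ yields $\lesssim r^{-\beta/2}$, and this part accounts for the constraint $\beta<\widetilde\tau(2)$.

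\emph{Large frequencies.} We need an $L^p$ analogue of Lemma~\ref{lem:large_f} for $\widehat{\mu_j}(r\theta)$ with $r\ge b^{2j}$. Fix $\theta$ and split $\D_j$ into the classes $\mathcal{I}_\ell(r\theta)$. The arcs $\mathcal{S}_\ell(r\theta)$ are the $b^{-\ell}$-neighbourhoods of the points where $\gamma'\perp\theta$. As $\theta$ ranges over $S^1$, each $I\in\D_j$ lies in $\mathcal{I}_\ell(r\theta)$ only for $\theta$ in a set of $\sigma$-measure $\lesssim b^{-\ell}$; this is the curvature condition.

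By Lemma~\ref{lem:Vander}, the contribution of $\mathcal{I}_\ell$ is bounded by $b^{j+\ell}r^{-1}\sum_{I\in\mathcal{I}_\ell}\mu_j(I)$, or by $b^{j}r^{-1/2}\sum\mu_j(I)$ when $\ell=j$. Raise this to the power $p$ and integrate in $\theta$. Since only $O(1)$ arcs of $b^{\ell-j}$ intervals each are involved, Hölder's inequality gives
\[
\Big(\sum_{I\in\mathcal{I}_\ell}\mu_j(I)\Big)^p\lesssim b^{(\ell... )}
\]
more precisely $\lesssim (b^{j-\ell})^{p-1}\sum_{I\in\mathcal{I}_\ell}\mu_j(I)^p$. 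Fubini then gives
\[
\int\Big(\sum_{I\in\mathcal{I}_\ell}\mu_j(I)\Big)^p\,d\sigma\lesssim b^{-\ell}\,b^{(j-\ell)(p-1)}\,S(p,1,j,j)^p\lesssim b^{-\ell}\,b^{(j-\ell)(p-1)}\,b^{-j\widetilde\tau(p)+jp\varepsilon}.
\]
Here the last step uses Lemma~\ref{lem:unif_pq} with $q=1$, i.e.\ $\varepsilon_{p,1,j}\to0$. The dominant term is $\ell=j$, giving
\[
\|\widehat{\mu_j}(r\cdot)\|_{L^p}\lesssim b^{j(1-(1+\widetilde\tau(p))/p+\varepsilon)}\,r^{-1/2}.
\]
Since $r\ge b^{2j}$, this is $\lesssim r^{-\beta/2}$ when $\beta<(1+\widetilde\tau(p))/p$. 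The other values of $\ell$ produce factors $b^{j+\ell}/r\le b^{2j}/r$ and are handled the same way.

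\emph{Intermediate scales.} For $j\le n\le 2j$ we have $b^n\le r\le b^{2n}$. We use Lemma~\ref{lem:conc_app} pointwise in $\theta$; it holds almost surely for all $\xi$ simultaneously, so it can be integrated in $\theta$. Writing $r\sim b^{n+k}$, we must bound the $L^{p/2}(d\sigma)$ norm of
\[
\sum_{\ell\le k}b^{2(\ell-k)}\sum_{D\subseteq(\mathcal{S}_\ell)_{b^{1-n}}}\mu_n(D)^2 .
\]
Exchange the sum and the $\theta$-integral exactly as above. A fixed $D$ belongs to $(\mathcal{S}_\ell(r\theta))$ only for $\theta$ in a set of measure $\lesssim b^{-\ell}$, and one then bounds the resulting $\ell^{p/2}$-of-$\ell^1$ sums of $\mu_n(D)^2$ by $S(p,\cdot,\ell,n)$-type quantities. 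Lemma~\ref{lem:E(S(p,q,j,n))} and Lemma~\ref{lem:unif_pq} with $q=2$ then control these.

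This intermediate step is where I expect the main difficulty. One must check that the exponent bookkeeping yields $r^{-\beta}$ for $\beta<\min\{\widetilde\tau(2),(1+\widetilde\tau(p))/p\}$; heuristically, $q=2$ captures the $L^2$ spread inside arcs and $p$ captures the angular averaging. The first thing I would try is Hölder inside each arc to pass to $\sum\mu_n(D)^p$, and then interpolate against $\sum\mu_n(D)^2$. Finally, the additive error $b^{-n}$ from Lemma~\ref{lem:conc_app} is harmless, and the at most $j$ intermediate terms cost only a logarithmic factor, which is absorbed by a slightly larger $\beta'$.
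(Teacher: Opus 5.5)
Your overall scheme matches the paper's: telescoping at $b^{2j}\le r\le b^{2j+2}$, Lemma~\ref{lem:small_f} for $n\ge 2j$, an $L^p(d\sigma)$ version of Lemma~\ref{lem:large_f}, and Lemma~\ref{lem:conc_app} integrated in $\theta$ for $j\le n\le 2j$. Your first two regimes are correct. In the large-frequency step, Hölder at the finest level with $S(p,1,j,j)$ gives each class $\ell<j$ the bound $b^{j(2-a+\varepsilon)}r^{-1}$ and the class $\ell=j$ the bound $b^{j(1-a+\varepsilon)}r^{-1/2}$, where $a:=(1+\widetilde\tau(p))/p$. These match the worst terms of Lemma~\ref{lem:circ_decay_large}.

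The gap is the intermediate regime. It is the only place where the two constraints $\widetilde\tau(2)$ and $a$ interact, and you leave its bookkeeping unchecked. Moreover, the tool you propose to try first loses. Hölder inside an arc, $(\sum_D\mu_n(D)^2)^{p/2}\le b^{(n-\ell)(p/2-1)}\sum_D\mu_n(D)^p$, turns the level-$\ell$ term into $b^{\ell-k}b^{-\ell a-(n-\ell)(a-1/2)}$ instead of $b^{\ell-k}b^{-\ell a-(n-\ell)\widetilde\tau(2)/2}$. By concavity of $\widetilde\tau$, $a-\frac12\le\frac{\widetilde\tau(2)}{2}$ for $p\ge 2$, typically strictly. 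So for $r\sim b^{n+k}$ with $k\ll n$ you only get $\beta\le 2a-1$. For instance, as $p\to\infty$, $2a-1\to 2\alpha_{\min}-1<\alpha_{\min}$ whenever $\alpha_{\min}<1$, while the target $\min\{a,\widetilde\tau(2)\}\to\alpha_{\min}$.

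What is needed is to keep the mixed $\ell^p(\ell^2)$ structure.

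\emph{Step 1.} Cover $(\mathcal{S}_\ell(\theta))_{b^{1-n}}$ by $O(1)$ intervals $I\in\D_\ell$, so that $(\sum_{D}\mu_n(D)^2)^{p/2}\lesssim\sum_{I}(\sum_{D\subset I}\mu_n(D)^2)^{p/2}$.

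\emph{Step 2.} Integrate in $\theta$, using that each $I\in\D_\ell$ is hit only for $\theta$ in a set of measure $\lesssim b^{-\ell}$:
\[
\Big\|\Big(\sum_{D\subseteq(\mathcal{S}_\ell(\theta))_{b^{1-n}}}\mu_n(D)^2\Big)^{1/2}\Big\|_{L^p(d\sigma)}\lesssim b^{-\ell/p}S(p,2,\ell,n)\le b^{-\ell a-(n-\ell)\widetilde\tau(2)/2+n\varepsilon_{p,2,n}}.
\]
The last inequality is the definition of $\varepsilon_{p,2,n}$, and $\varepsilon_{p,2,n}\to 0$ by Lemmas~\ref{lem:E(S(p,q,j,n))} and~\ref{lem:unif_pq} with $q=2$. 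This product-type bound rests on the independence of $\nu_\ell(I)$ and $Y_{\ell,n}(2,I)$.

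\emph{Step 3.} Apply $\sqrt{\sum_\ell x_\ell}\le\sum_\ell\sqrt{x_\ell}$ before taking $L^p(d\sigma)$ norms; working in $L^{p/2}$ instead fails the triangle inequality for $p<2$. The sum over $\ell\le k$ is then geometric with ratio $b^{1-a+\widetilde\tau(2)/2}>1$ (since $a\le 1$), hence dominated by $\ell=k$. This gives
\[
\sigma_p(\mu_{n+1}-\mu_n)(r)\lesssim b^{n(2\delta+\varepsilon_{p,2,n})}b^{-ka-(n-k)\widetilde\tau(2)/2}+b^{-n},
\]
which is the content of Lemma~\ref{lem:circ_decay_interm}.

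\emph{Step 4.} Let $m=\min\{a,\widetilde\tau(2)\}$ and $0\le k\le n$. Then $ka+(n-k)\frac{\widetilde\tau(2)}{2}-(n+k)\frac m2=k(a-m)+(n-k)\frac{\widetilde\tau(2)-m}{2}\ge 0$. Hence each intermediate term is $\lesssim b^{O(n\delta)}r^{-m/2}$, and $b^{-n}\le r^{-1/2}$. Only then does your logarithmic-loss remark finish the proof.

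You named $S(p,\cdot,\ell,n)$ with $q=2$ as the relevant quantity, but without this computation the decisive step of the theorem is not established.
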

\begin{remark}
\emph{a)} If $1 \leq p \leq 2$, then 
\begin{equation}\label{eq:min_a_2}
    \min \{ \widetilde{\tau}(2) , (1+\widetilde{\tau}(p))/p \}=\widetilde{\tau}(2)\,.
\end{equation} 
Indeed, since $(1+\widetilde{\tau}(p)) /p$ is a slope between $(p,\widetilde{\tau}(p))$ and $(0,-1)$, and $\widetilde{\tau}(p)$ is a concave function of $p$, $(1+\widetilde{\tau}(2))/2 \leq (1+\widetilde{\tau}(p))/ p $ if $ 1 \leq p \leq 2$. Thus, \eqref{eq:min_a_2} follows from the fact that $\widetilde{\tau}(2) \leq 1$.\\
\emph{b)} The borderline case $p=\infty$ is equivalent to Theorem \ref{FD_up} and thus we assume in the proof that $p<\infty$. We note that with natural $L^\infty$-interpretations, the proof below would also cover the $p=\infty$ case (and this would essentially repeat the proof of Theorem \ref{FD_up}). 
\end{remark}

\begin{lemma}\label{lem:circ_decay_large}
    If $\beta < (1+\widetilde{\tau}(p))/p$, then
    $
    \sigma_p(\mu_n)(r) \lesssim_\beta r^{(\varepsilon_{p,1,n} - \beta)/2},
    $ for all $r \geq b^{2n}$, where the implicit constant is (deterministic and) independent of $n$.
\end{lemma}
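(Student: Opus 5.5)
We follow the proof of Lemma \ref{lem:large_f}, replacing the $\sup$ over directions by an $L^p$ average over $\theta\in S^1$. Fix $r\ge b^{2n}$ and write $\xi=r\theta$. We decompose $\widehat{\mu_n}(r\theta)=\sum_{j=1}^n\sum_{I\in\mathcal{I}_j(r\theta)}\int_I e^{-2\pi i x\cdot r\theta}\,d\mu_n(x)$. On each $I\in\D_n$ the density of $\mu_n$ is constant, equal to $b^n\mu_n(I)$. Lemma \ref{lem:Vander} then bounds the $\mathcal{I}_j$ part by $b^{n+j}\mu_n(\mathcal{S}_j(r\theta))/r$ for $j<n$, and the $\mathcal{I}_n$ part by $b^n\mu_n(\mathcal{S}_n(r\theta))r^{-1/2}$.

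The key geometric observation is how $\mathcal{S}_j(r\theta)$ moves with $\theta$. Since $\gamma$ has nonvanishing curvature, $\mathcal{S}_j(r\theta)$ is a union of $O(1)$ arcs of length $\sim b^{-j}$ centred at the points where $\gamma'(t)\perp\theta$. As $\theta$ ranges over $S^1$, these centres sweep $\Gamma$ with bounded multiplicity, and the normal direction map $t\mapsto\gamma'(t)^\perp$ is bi-Lipschitz. Hence for a fixed $I\in\D_j$ (or an $O(1)$-enlargement of it), the set of $\theta$ with $I\cap\mathcal{S}_j(r\theta)\neq\emptyset$ has $\sigma$-measure $\lesssim b^{-j}$. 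Covering $\mathcal{S}_j(r\theta)$ by $O(1)$ elements of $\D_j$ gives
\[
\int_{S^1}\mu_n(\mathcal{S}_j(r\theta))^p\,d\sigma(\theta)\lesssim b^{-j}\sum_{I\in\D_j}\mu_n(I)^p=b^{-j}S(p,1,j,n)^p\le b^{-j}\,b^{p(-j\widetilde{\tau}(p)/p+n\varepsilon_{p,1,n})},
\]
using $\widetilde{\tau}(1)=0$ and the definition of $\varepsilon_{p,1,n}$. So the $L^p(\sigma)$ norm of the $j$-th term is $\lesssim b^{n+j-j(1+\widetilde{\tau}(p))/p+n\varepsilon_{p,1,n}}/r$ for $j<n$. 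For $j=n$ the bound is $b^{n-n(1+\widetilde{\tau}(p))/p+n\varepsilon_{p,1,n}}r^{-1/2}$.

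Write $\gamma_p=(1+\widetilde{\tau}(p))/p$. By Minkowski's inequality in $L^p(\sigma)$, summing over $j$ gives
\[
\sigma_p(\mu_n)(r)\lesssim b^{n\varepsilon_{p,1,n}}\Bigl(b^{n(1-\gamma_p)}r^{-1/2}+\sum_{j<n}b^{n+j(1-\gamma_p)}r^{-1}\Bigr).
\]
Note that $\gamma_p\le 1$ (as in the Remark, since $\widetilde{\tau}(p)\le p-1$), so the geometric sum is $\lesssim n\,b^{n(2-\gamma_p)}r^{-1}$. Because $r\ge b^{2n}$, we have $b^n\le r^{1/2}$. Thus the first term is at most $r^{(1-\gamma_p)/2-1/2}=r^{-\gamma_p/2}$, and the second is at most $n\,r^{(2-\gamma_p)/2-1}=n\,r^{-\gamma_p/2}$. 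Likewise $b^{n\varepsilon_{p,1,n}}\le r^{\varepsilon_{p,1,n}/2}$ when $\varepsilon_{p,1,n}\ge0$; if it is negative, the bound only improves, or we simply replace it by its positive part. Since $\beta<\gamma_p$ and $n\lesssim\log r$, the factor $n$ is absorbed into $r^{(\gamma_p-\beta)/2}$ with a constant depending only on $\beta$. This yields $\sigma_p(\mu_n)(r)\lesssim_\beta r^{(\varepsilon_{p,1,n}-\beta)/2}$ with a deterministic constant.

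The main obstacle is the measure estimate $\sigma\{\theta:I\cap\mathcal{S}_j(r\theta)\neq\emptyset\}\lesssim b^{-j}$, uniformly in $I$, $j$ and $r$. To prove it, we parametrize $\theta$ by the tangency parameter $t_0(\theta)$ where $\gamma'(t_0)\cdot\theta=0$. Since $|\gamma'(t)\cdot\theta|\sim|t-t_0|$ with constants depending only on the curvature bounds, $\mathcal{S}_j(r\theta)$ lies within distance $\lesssim b^{-j}$ of $\gamma(t_0(\theta))$ (for $j=1$ this is trivial). The map $\theta\mapsto t_0$ is locally bi-Lipschitz with $O(1)$ branches, which gives the claim.
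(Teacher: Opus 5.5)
Your proposal is correct and follows the paper's proof essentially step by step. It uses the same decomposition into $\mathcal{I}_j(\theta)$, van der Corput on each piece, the bound $\|\mu_n(\mathcal{S}_j(\theta))\|_{L^p(d\sigma)}^p\lesssim b^{-j}\sum_{I\in\D_j}\mu_n(I)^p$ (since each $I\in\D_j$ meets $\mathcal{S}_j(\theta)$ only for $\theta$ in a set of $\sigma$-measure $\lesssim b^{-j}$), and the summation over $j$ using $r\ge b^{2n}$ and $\widetilde{\tau}(p)\le p-1$.

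One small correction: if $\varepsilon_{p,1,n}<0$, then $b^{n\varepsilon_{p,1,n}}\ge r^{\varepsilon_{p,1,n}/2}$, so the bound does not ``only improve''. Replacing $\varepsilon_{p,1,n}$ by its positive part is the right fix; the paper's proof tacitly does the same, and it suffices for the application.
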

\begin{proof}
    Recall the notations $\mathcal{I}_j(\xi)$, $\mathcal{S}_j (\xi)$ from Section \ref{sec:Other Lemmas}. If $\theta\in S^1$ and  $I \in \mathcal{I}_j(\theta)$ for $1 \leq j \leq n-1$, we use Lemma \ref{lem:Vander} and obtain that
    $$
    \left|\int_I \exp(-2\pi i r \theta\cdot x) d\mu_n(x)\right| \lesssim b^{n+j} \mu_n(I) r^{-1} ,
    $$
    and if $ I \in \mathcal{I}_n(\theta)$, then 
    $$
    \left|\int_I \exp(-2\pi i r \theta\cdot x) d\mu_n(x)\right| \lesssim b^n \mu_n(I) r^{-1/2} .
    $$
Therefore, if $1 \leq j \leq n-1$, then we have
\[
        \norm{\sum_{I \in \mathcal{I}_j(\theta)} \int_I \exp(-2\pi i r\theta\cdot x) d\mu_n(x)}_{L^p(d\sigma)} \lesssim b^{n+j} r^{-1} \norm{\mu_n (\mathcal{S}_j(\theta))}_{L^p(d\sigma)}\,.
\]

Recall that $\mathcal{S}_j(\theta)$ is contained in a union of $O(1)$-many arcs of length $b^{-j}$, and that $S^1(I) := \{\theta\in S^1\mid I \cap \mathcal{S}_j(\theta) \neq \varnothing \}$ is an arc of length $\sim b^{-j}$ for $I \in \D_j$. 
Thus, we obtain
\begin{align*}
    \norm{\mu_n (\mathcal{S}_j(\theta))}_{L^p(d\sigma)}^p  &\lesssim \int \left|\sum_{I \in \D_j, I \cap \mathcal{S}_j(\theta) \neq \varnothing }\mu_n (I)\right|^p d\sigma(\theta) \\
    &\lesssim \sum_{I \in \D_j} \mu_n(I)^p  \sigma ( \mathcal{S}^1(I) )\\
    &\lesssim b^{-j} \sum_{I \in \D_j} \mu_n(I)^p.
\end{align*}
Putting things together, we have
    \begin{align*}
        \norm{\sum_{I \in \mathcal{I}_j(\theta)} \int_I \exp(-2\pi i r\theta\cdot x) d\mu_n(x)}_{L^p(d\sigma)} 
        &\lesssim b^{n+j} r^{-1} \left(b^{-j} \sum_{I \in \D_j} \mu_n(I)^p \right)^{1/p}\\
        &\lesssim (b^{n(1+\varepsilon_{p,1,n})+j(1-1/p - \widetilde{\tau}(p)/p)})r^{-1}\,,
    \end{align*}
    where we used the fact that $\widetilde{\tau}(1)=0$. Similarly, if $n=j$, then  
    \begin{align*}
        \norm{\sum_{I \in \mathcal{I}_n(\theta)} \int_I \exp(-2\pi i r \theta\cdot x) d\mu_n(x)}_{L^p(d\sigma)} &\lesssim  b^{n} r^{-1/2} \norm{\mu_n (\mathcal{S}_j(\theta))}_{L^p(d\sigma)}\\
        &\lesssim b^{n} r^{-1/2} \left(b^{-n} \sum_{I \in \D_n} \mu_n(I)^p \right)^{1/p}\\
        &\lesssim (b^{n(1+\varepsilon_{p,1,n}-1/p - \widetilde{\tau}(p)/p )})r^{-1/2}.
    \end{align*}    
    We sum the above estimates over $0 \leq j \leq n$ and use that $r\geq b^{2n}$, $\widetilde{\tau}(p) \leq p-1$. This implies
    \begin{align*}
        \sigma_p(\mu_n)(r) &\lesssim (b^{n(1+ \varepsilon_{p,1,n}-1/p -\widetilde{\tau}(p)/p )} ) ( n b^n r^{-1}+ r^{-1/2}) \\
        &\lesssim_\beta r^{(\varepsilon_{p,1,n}-\beta)/2}\,,
    \end{align*}
    as required.
\end{proof}

\begin{lemma}\label{lem:circ_decay_interm}
    Let $b^n \leq r \leq b^{2n}$. For any $\delta>0$, almost surely we have
    $$
    \sigma_p (\mu_{n+1} - \mu_n)(r) \lesssim b^{-n (\widetilde{\tau}(2) - (1+\widetilde{\tau}(p))/p -3\delta) }  r^{\widetilde{\tau}(2)/2 - (1+\widetilde{\tau}(p))/p}.
    $$
\end{lemma}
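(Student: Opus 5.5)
The plan is to combine the pointwise concentration bound of Lemma~\ref{lem:conc_app} with an $L^p(d\sigma)$ integration over directions, in the same way Lemma~\ref{lem:circ_decay_large} upgraded the pointwise van der Corput bounds. Fix $1\le k\le n$ with $b^{n+k-1}\le r\le b^{n+k}$. Lemma~\ref{lem:conc_app} gives, almost surely and uniformly in $\theta\in S^1$,
\[
|\widehat{\mu_{n+1}}(r\theta)-\widehat{\mu_n}(r\theta)|\lesssim_\delta b^{2n\delta}\Bigl(\sum_{1\le \ell\le k} b^{2(\ell-k)}\sum_{D_\iii\subseteq(\mathcal{S}_\ell(\theta))_{b^{1-n}}}\mu_n(D_\iii)^2\Bigr)^{1/2}+b^{-n}.
\]
I will take the $L^p(d\sigma(\theta))$ norm of the right-hand side. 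The term $b^{-n}$ is harmless, since the claimed bound is at least $b^{-n}$ in the range $r\ge b^n$. For the main term I will use the triangle inequality in $\ell$, writing $(\sum_\ell A_\ell)^{1/2}\le\sum_\ell A_\ell^{1/2}$. It then suffices to bound
\[
b^{\ell-k}\Bigl\|\Bigl(\sum_{D_\iii\subseteq(\mathcal{S}_\ell(\theta))_{b^{1-n}}}\mu_n(D_\iii)^2\Bigr)^{1/2}\Bigr\|_{L^p(d\sigma)}
\]
for each $\ell$ and sum the results over $\ell\le k$.

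Since $\mathcal{S}_\ell(\theta)$ is covered by $O(1)$ arcs of length $\sim b^{-\ell}$, its neighbourhood is covered by $O(1)$ elements $I\in\D_\ell$. The inner sum is therefore bounded by $\sum_{I\in\D_\ell,\,I\cap(\mathcal{S}_\ell(\theta))_{b^{1-n}}\neq\varnothing} S(2,I,n)^2$, where $S(2,I,n)^2=\sum_{J\subset I}\mu_n(J)^2$. Exactly as in Lemma~\ref{lem:circ_decay_large}, the set of $\theta$ for which a given $I\in\D_\ell$ meets $(\mathcal{S}_\ell(\theta))_{b^{1-n}}$ is an arc of measure $\lesssim b^{-\ell}$. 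This gives
\[
\Bigl\|(\cdots)^{1/2}\Bigr\|_{L^p(d\sigma)}^p\lesssim b^{-\ell}\sum_{I\in\D_\ell}S(2,I,n)^p=b^{-\ell}S(p,2,\ell,n)^p .
\]
By Lemma~\ref{lem:unif_pq} with $q=2$, almost surely $S(p,2,\ell,n)\le b^{n\varepsilon_{p,2,n}}b^{-\ell\widetilde{\tau}(p)/p-(n-\ell)\widetilde{\tau}(2)/2}$ with $\varepsilon_{p,2,n}\le\delta$ for large $n$. Hence the $\ell$-th term is
\[
\lesssim b^{n\delta}\,b^{\ell-k}\,b^{-\ell(1+\widetilde{\tau}(p))/p}\,b^{-(n-\ell)\widetilde{\tau}(2)/2}
= b^{n\delta-k-n\widetilde{\tau}(2)/2}\,b^{\ell(1+\widetilde{\tau}(2)/2-(1+\widetilde{\tau}(p))/p)}.
\]

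The exponent of $b^{\ell}$ is positive, because $(1+\widetilde{\tau}(p))/p\le1$ (this follows from $\widetilde{\tau}(p)\le p-1$). So the sum over $\ell\le k$ is dominated by $\ell=k$, which yields
\[
b^{n\delta}\,b^{-k(1+\widetilde{\tau}(p))/p}\,b^{-(n-k)\widetilde{\tau}(2)/2}.
\]
I then substitute $b^k\sim r b^{-n}$. This gives $b^{-n(\widetilde{\tau}(2)-(1+\widetilde{\tau}(p))/p)}\,r^{\widetilde{\tau}(2)/2-(1+\widetilde{\tau}(p))/p}$ up to the factor $b^{n\delta}$. Together with the factor $b^{2n\delta}$ from Lemma~\ref{lem:conc_app}, the total loss is $b^{3n\delta}$, which is exactly the claimed bound; the finitely many small $n$ are absorbed into the random constant.

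The main obstacle is the geometric claim that, for fixed $I\in\D_\ell$, the set of directions $\theta$ with $I$ meeting the $b^{1-n}$-neighbourhood of $\mathcal{S}_\ell(\theta)$ has measure $O(b^{-\ell})$. This needs nonvanishing curvature: $|\gamma'(t)\cdot\theta|\lesssim b^{-\ell}$ forces $\theta$ to lie within $O(b^{-\ell})$ of the normal at some point of $I$, and the normals along an arc of length $b^{-\ell}$ sweep an angle of only $O(b^{-\ell})$. The neighbourhood enlargement by $b^{1-n}\le b^{1-\ell}$ does not affect this. A secondary concern is that Lemma~\ref{lem:conc_app} is stated with the exceptional null set independent of $\xi$, so it can be integrated in $\theta$; I would take this as given from its proof.
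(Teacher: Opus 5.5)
Your proposal is correct and follows the paper's proof essentially step for step. The shared steps are: Minkowski over $\ell$ after Lemma~\ref{lem:conc_app}; the fact that each $I\in\D_\ell$ matters only for directions $\theta$ in an arc of length $\sim b^{-\ell}$, which yields $b^{-\ell}S(p,2,\ell,n)^p$; Lemma~\ref{lem:unif_pq} with $q=2$; domination of the $\ell$-sum by $\ell=k$, since the exponent $1+\widetilde{\tau}(2)/2-(1+\widetilde{\tau}(p))/p$ is positive; and the substitution $b^k\sim rb^{-n}$. You also make explicit several points the paper leaves tacit: the $b^{-n}$ term is dominated by the claimed bound when $r\ge b^n$, and the $O(1)$ count of relevant $I\in\D_\ell$ lets you pass from $\bigl(\sum_I S(2,I,n)^2\bigr)^{p/2}$ to $\sum_I S(2,I,n)^p$.
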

\begin{proof}
    Let $\delta>0$. Let $1 \leq k \leq n$ such that $b^{n+k-1} \leq r \leq b^{n+k}$ and let $\xi = r\theta$, where $\theta\in S^1$. Note that $\mathcal{S}_\ell(\xi) = \mathcal{S}_\ell(\theta)$. Lemma \ref{lem:conc_app} implies that almost surely we have
    \begin{align*}
        &\sigma_p(\mu_{n+1} - \mu_n)(r) \\
        &\lesssim b^{2n\delta}\sum_{1 \leq \ell\leq k} b^{\ell-k}  \norm{\left(\sum_{\iii\in\Lambda^n,D_\iii \subseteq (\mathcal{S}_\ell(\theta))_{b^{1-n}}} \mu_n(D_\iii)^2 \right)^{1/2}}_{L^p(d\sigma)} +b^{-n}\\
        &\lesssim b^{2n\delta}\sum_{1 \leq \ell \leq k} b^{\ell-k}  \left(b^{-\ell} \sum_{I \in \D_\ell} \left| \sum_{\iii\in\Lambda^n,D_\iii \subseteq (\mathcal{S}_\ell(\theta))_{b^{1-n}}} \mu_n(D_\iii)^2 \right|^{p/2} \right)^{1/p}+b^{-n}\\
        &\lesssim b^{2n\delta}\sum_{1 \leq \ell \leq k} b^{\ell-k}  (b^{-\ell(1+\widetilde{\tau}(p))/p - (n-\ell)\widetilde{\tau}(2)/2})b^{n \varepsilon_{p,2,n}}+b^{-n}\,,
    \end{align*}
    where, in the second inequality, we have again used that each $I\in\D_\ell$ belongs to $(\mathcal{S}_\ell(\theta))_{b^{1-n}}$ for $\theta$ in an interval of length $\sim b^{-\ell}$.
    For sufficiently large $n$, Lemma \ref{lem:unif_pq} implies that $\varepsilon_{p,2,n} < \delta$. Since $r \sim b^{n+k}$, and $1-(1-\widetilde{\tau}(p))/p+\widetilde{\tau}(2)/2>0$, the desired result follows by summing over $\ell$.
\end{proof}
Now, we can prove Theorem \ref{thm:circ_decay}.
\begin{proof}[Proof of Theorem \ref{thm:circ_decay}]
    Consider $r$ with $b^{2j} \leq r \leq b^{2j+2}$, $j \in \N$ and let $m \geq 2j$. Lemma \ref{lem:small_f} implies that if $\beta < \widetilde{\tau}(2)$, then almost surely, 
    $$
    \sum_{n=2j}^m \sigma_p(\mu_{n+1} - \mu_n) (r) = \sum_{n=2j}^m b^{-n\beta/2} \lesssim r^{-\beta/2}\,.
    $$
    where the implicit constant is independent of $n$ and $k$.
    Also, Lemmas \ref{lem:unif_pq} and \ref{lem:circ_decay_large} imply that if $\beta < (1+\widetilde{\tau}(p))/p$, then $\sigma_p (\mu_j) (r) \lesssim r^{-\beta/2}$ almost surely. \\
    If $j \leq n \leq 2j$, then Lemma \ref{lem:circ_decay_interm} imply that almost surely we have
    \begin{align*}
    \sum_{n=j}^{2j} \sigma_p(\mu_{n+1}-\mu_n) (r) &\lesssim  \sum_{n=j}^{2j} b^{-n (\widetilde{\tau}(2) -(1+\widetilde{\tau}(p))/p -3\delta)}  r^{\widetilde{\tau}(2)/2 -(1+\widetilde{\tau}(p))/p}\\
    &\lesssim r^{-\widetilde{\tau}(2)/2 + 3\delta} + r^{-(1+\widetilde{\tau}(p))/2p + 3\delta/2}.
    \end{align*}
    Combining the estimates above, we obtain that for $\beta < \min \{\widetilde{\tau}(2), (1+\widetilde{\tau}(p))/p \}$,
    \[
    \begin{split}
        \sigma_p(\mu_m)(r) &\leq \sigma_p(\mu_j)(r) +  \sum_{n=j}^{2j}  \sigma_p(\mu_{n+1} - \mu_n)(r)+ \sum_{n=2j}^{m}  \sigma_p(\mu_{n+1} - \mu_n)(r)  \\
        &\lesssim r^{-\beta/2}\,,
    \end{split}
    \]
    completing the proof.
\end{proof}

\section{Appendix}

Let $\nu$ be the Mandelbrot cascade on $[0,1]^d$ corresponding to a random variable $W$ satisfying \eqref{eq:unif} and \eqref{eq:moments}. The following theorem generalizes the main result of \cite{CLS}. The result, as stated here, is a special case of more general results obtained recently in \cite{CHQW,LinQiuTan2024,LinQiuTan2025}. Compared to these papers, our approach below provides a simple proof under the fairly general moment condition \eqref{eq:moments}.
\begin{theorem}\label{thm:main}
    $\dim_F \nu=\min\{2,\dim_2 \nu\}$ almost surely on non-extinction.
\end{theorem}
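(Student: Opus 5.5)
The upper bound $\dim_F\nu\le\min\{2,\dim_2\nu\}$ is the general inequality $\dim_F\eta\le\dim_2\eta$ combined with the definition, which caps the Fourier dimension at $d=2$ (for $d=1$ the cap is $\min\{d,\dim_2\nu\}$ and the same argument applies). We also recall that $\dim_2\nu=\widetilde{\tau}(2)$ almost surely on non-extinction; this follows from Lemma \ref{lem:unif_pq} with $j=n$, $p=q=2$, together with the standard lower bound. So the main task is the lower bound $|\widehat{\nu}(\xi)|\lesssim_\beta|\xi|^{-\beta/2}$ for every $\beta<\min\{2,\widetilde{\tau}(2)\}$. We plan to mimic the structure of the proof of Theorem \ref{FD_up}, telescoping
\[\widehat{\nu_m}(\xi)=\widehat{\nu_j}(\xi)+\sum_{n=j}^{m-1}\bigl(\widehat{\nu_{n+1}}(\xi)-\widehat{\nu_n}(\xi)\bigr)\]
with $b^{j}\sim|\xi|$. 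The difference is that we now choose the splitting scale $n\approx\log_b|\xi|$ instead of $2j$.

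\textbf{Step 1 (large frequencies, $|\xi|\ge b^n$).} The density of $\nu_n$ is constant on each cube $Q\in\Q_n$. Hence $\widehat{\nu_n}(\xi)=\sum_Q\nu_n(Q)\,b^{nd}\widehat{\mathbf 1_Q}(\xi)$, and $|b^{nd}\widehat{\mathbf 1_Q}(\xi)|\lesssim\prod_i\min\{1,b^n/|\xi_i|\}$. For this term we cannot rely on curvature. Instead we estimate $|\widehat{\nu_j}(\xi)|$ for $|\xi|\sim b^j$ using the decay of $\widehat{\mathbf 1_Q}$ in the largest coordinate: $|\widehat{\nu_j}(\xi)|\lesssim\nu_j([0,1]^d)\,b^j/|\xi|_\infty$. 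This is $O(1)$ at $|\xi|\sim b^j$, which is too weak. We therefore take $j$ slightly smaller, $b^{j}\sim|\xi|^{1-\eta}$, which gives $|\widehat{\nu_j}(\xi)|\lesssim|\xi|^{-\eta}$. This is not enough when $\beta/2>\eta$, so this step needs to go further. Writing $\widehat{\mathbf 1_Q}$ as a product in $d$ variables, the sum over $Q$ becomes a sum of oscillating phases $e^{-2\pi i x_Q\cdot\xi}$, and this sum is controlled by the differences at coarser levels. The cleaner route is therefore to push $j$ down to a fixed level, or to $j\sim\epsilon\log|\xi|$, and let the difference terms carry everything.

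\textbf{Step 2 (difference terms).} For every $n$ and $\xi$ we write $\widehat{\nu_{n+1}}(\xi)-\widehat{\nu_n}(\xi)=\sum_{\iii\in\Lambda^n}\nu_n(Q_\iii)X_{\iii,\xi}$, where
\[X_{\iii,\xi}=b^{nd}\sum_{k\in\Lambda}\bigl((W_\iii)_k-1\bigr)\int_{Q_{\iii k}}e^{-2\pi ix\cdot\xi}\,dx .\]
These variables are independent, centered and uniformly bounded in $L^p$, with
\[|X_{\iii,\xi}|\lesssim \max_k|(W_\iii)_k-1|\,\min\{1,(b^{n}/|\xi|_\infty)\}.\]
We apply Lemma \ref{lem:conc} to the real and imaginary parts with $a_\iii=\nu_n(Q_\iii)\min\{1,b^n/|\xi|_\infty\}$, $t=b^{n\delta}(\sum a_\iii^2)^{1/2}$ and $\lambda=t^{-1}b^{n\delta/2}$. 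We then take a union bound over a $b^{-n}$-grid of $\xi$ with $|\xi|\le b^{Cn}$, choosing the moment $p$ large in terms of $\delta$ and $C$, and finish with Borel–Cantelli. We also need $\max a_\iii/(\sum a_\iii^2)^{1/2}$ to be exponentially small, which follows from Lemma \ref{lem:unif_pq} since $\alpha_{\min}>0$. Combining this with $\sum_\iii\nu_n(Q_\iii)^2\le b^{-n\widetilde{\tau}(2)+n\varepsilon}$ from Lemma \ref{lem:unif_pq} gives, almost surely,
\[|\widehat{\nu_{n+1}}(\xi)-\widehat{\nu_n}(\xi)|\lesssim b^{n(\delta+\varepsilon_n)}\,b^{-n\widetilde{\tau}(2)/2}\min\{1,b^n/|\xi|\}.\]
Lipschitz continuity of $\widehat{\nu_n}$ then extends this from the grid to all $\xi$, exactly as in Lemma \ref{lem:conc_app}. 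For $|\xi|>b^{Cn}$ the trivial bound $b^n/|\xi|$ already suffices.

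\textbf{Step 3 (summation).} Fix $\xi$ with $|\xi|\sim b^N$. Summing over $n\le N$ gives $\sum_n b^{n(1-\widetilde{\tau}(2)/2+\delta')}b^{-N}\lesssim b^{-N\min\{1,\widetilde{\tau}(2)/2\}+N\delta'}$, using $\widetilde{\tau}(2)\le 2$ when $d=2$; in the case $\widetilde{\tau}(2)/2>1$ the bound is $b^{-N(1-\delta')}$, matching the cap $\min\{2,\cdot\}$. Summing over $n>N$ gives $b^{-N\widetilde{\tau}(2)/2+N\delta'}$. The initial term $\widehat{\nu_0}=\widehat{\mathbf 1_{[0,1]^d}}$ has decay $O(|\xi|^{-1})$, which is enough since $\min\{2,\widetilde{\tau}(2)\}/2\le1$. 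Letting $m\to\infty$ yields the claim.

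\textbf{Main obstacle.} I expect the hardest part to be Step 2 near the critical level $n\approx N$ together with the uniformity over all $\xi$. The tail condition in Lemma \ref{lem:conc} is only polynomial, so the term $N\Phi(M)$ has to beat the grid cardinality $b^{Cnd}$. The bound $\max a_\iii\le b^{-n\alpha_{\min}+n\varepsilon}$ is what makes $M$ polynomially large in $b^n$, and this is what keeps that term under control.
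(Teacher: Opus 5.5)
Your lower-bound scheme matches the paper's in outline: telescope from $\nu_0$, whose transform is $O(|\xi|^{-1})$, and control each increment by Lemma~\ref{lem:conc}, a net of frequencies and Borel--Cantelli. Where you differ is in how you get the extra factor $b^{n}/|\xi|_\infty$ when $|\xi|_\infty>b^{n+1}$. You build that factor into the coefficients $a_\iii$, run the concentration argument on a net up to $|\xi|\le b^{Cn}$, and use the trivial bound beyond.

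The paper proves the probabilistic estimate only for $|\xi|_\infty\le b^{n+1}$ (Lemma~\ref{main_probab_lemma}). It then uses the exact identity $\widehat{\nu_{n+1}}(\xi)-\widehat{\nu_n}(\xi)=\bigl(\widehat{\nu_{n+1}}(\xi')-\widehat{\nu_n}(\xi')\bigr)\prod_j\xi_j'/\xi_j$ for $\xi-\xi'\in b^{n+1}\Z^d$. This holds because both densities are constant on cubes of $\Q_{n+1}$, whose corners lie in $b^{-(n+1)}\Z^d$. The identity transfers the small-frequency bound to every $\xi$ deterministically, with no cutoff $C$ and a smaller net. Your version ignores this periodicity; it is the curvilinear mechanism of Lemmas~\ref{lem:conc_app} and~\ref{lem:large_f} moved to cubes. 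It can be made to work, but as written it has the following problems.

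\emph{The net is too coarse.} A $b^{-n}$-net with Lipschitz extension ``exactly as in Lemma~\ref{lem:conc_app}'' adds an error $O(b^{-n})$ to each increment. For $|\xi|\sim b^N$ and $N/C\le n\le N$, your main term is about $b^{-n\widetilde\tau(2)/2}b^{n-N}\ll b^{-n}$, so the error dominates. Summed over this range, the errors give only $b^{-N/C}$, which requires $\beta/2\le 1/C$. The trivial range $n<N/C$ requires $\beta/2<1-1/C$. Together these rule out every $\beta>1$, so the argument as written cannot reach any $\beta>1$.

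The repair is a mesh $b^{-Kn}$ with, say, $K=C+d$. The Lipschitz error is then below $b^{-nd/2}b^{n(1-C)}\le b^{-n\widetilde\tau(2)/2}\min\{1,b^n/|\xi|_\infty\}$. The net still has only $b^{O(n)}$ points, which you pay for by taking $p$ larger. This is also why the paper uses the mesh $b^{-nd/2}$ rather than $b^{-n}$.

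\emph{The ratio $\max_\iii a_\iii/(\sum_\iii a_\iii^2)^{1/2}$.} Your claim that it is exponentially small ``since $\alpha_{\min}>0$'' is unjustified, for two reasons. Lemma~\ref{lem:unif_pq} gives only upper bounds, not the lower bound on $\sum_\iii a_\iii^2$ that the claim would need. And the natural exponent is $\alpha_{\min}-\widetilde\tau(2)/2$, which vanishes when $q_{\max}\le 2$. The claim is also unnecessary. With your $t$ and $\lambda$, $\lambda\max_\iii a_\iii\le b^{-n\delta/2}$ simply because $\max_\iii a_\iii\le(\sum_\iii a_\iii^2)^{1/2}$. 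Hence $M\gtrsim b^{n\delta/2}$, and that is what controls $b^{nd}\Phi(M)$, exactly as in the paper.

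\emph{The upper bound for $d\ge 3$.} The theorem concerns cascades on $[0,1]^d$ for every $d$. For $d\le 2$ your argument is fine, since then $\dim_2\nu\le 2$. For $d\ge 3$, however, the definition caps $\dim_F$ at $d$, not at $2$, and $\dim_2\nu$ can exceed $2$. For instance, $W\equiv 1$ gives Lebesgue measure on $[0,1]^d$: there $\dim_2\nu=d$, while $|\widehat\nu(\xi)|=(\pi t)^{-1}$ at $\xi=(t,0,\dots,0)$ with $t\in\frac12+\N$. So $\dim_F\nu\le 2$ is a separate statement that does not follow from $\dim_F\le\dim_2$, and you must prove or cite it; the paper takes it from \cite[Theorem 3.2]{CLS}. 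Your Step~3 already treats the case $\widetilde\tau(2)>2$ correctly, so only the upper bound is affected.
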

Recall that $\dim_F \eta \leq \dim_2 \eta$ holds for any finite Borel measure $\eta$. The upper bound $\dim_F\nu\le 2$ is proved in \cite[Theorem 3.2]{CLS} in the case of iid $W_i$, but the proof extends to general cascades without any difficulty. It thus remains to prove the lower bound $\dim_F\nu\ge\min\{2,\dim_2 \nu\}$. 
We first recall the following lemma, which reveals, in particular, that $\widetilde{\tau}(2)$ is almost surely the same as $\dim_2 \nu$.
\begin{lemma}\label{lem:dim_2}
Almost surely on non-extinction,
\[\dim_2(\nu)=\lim_{n\to\infty}\frac{\log\sum_{I\in\Q_n}\nu_n(I)^2}{-n}=\widetilde{\tau}(2)\,.
\]
\end{lemma}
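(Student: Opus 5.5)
The plan is to establish two things separately. First, the limit of $-\frac1n\log\sum_{I\in\Q_n}\nu_n(I)^2$ exists and equals $\widetilde{\tau}(2)$. Second, this limit coincides with $\dim_2\nu$, which is defined through the limiting measure $\nu$ rather than $\nu_n$.

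\emph{Upper bound on the sum (lower bound on the exponent).} Take $p=2$, $q=1$ and $j=n$ in Lemma \ref{lem:E(S(p,q,j,n))}. Then $S(2,1,n,n)=(\sum_{I\in\Q_n}\nu_n(I)^2)^{1/2}$, and Lemma \ref{lem:unif_pq} gives, almost surely, $\sum_{I\in\Q_n}\nu_n(I)^2\le b^{-n\widetilde{\tau}(2)+2n\varepsilon_{2,1,n}}$ with $\varepsilon_{2,1,n}\to0$. Hence $\liminf_n -\frac1n\log\sum\nu_n(I)^2\ge\widetilde{\tau}(2)$.

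\emph{Lower bound on the sum.} Here we split into two cases.
\begin{itemize}
\item If $2\le q_{\max}$, then $\widetilde{\tau}(2)=\tau(2)$. Write $\nu(I)=\nu_n(I)Z_I$, where the $Z_I$ are i.i.d.\ copies of the total mass $\nu([0,1]^d)$, independent of $\mathcal F_n$. Then $\sum_I\nu(I)^2=\sum_I\nu_n(I)^2Z_I^2$.
\item If $2>q_{\max}$, then $\widetilde{\tau}(2)=2\alpha_{\min}$. It then suffices to exhibit, for all large $n$, a single cube with $\nu(I)\ge b^{-n(\alpha_{\min}+\epsilon)}$. This follows from the standard multifractal fact that $\inf_x\dim(\nu,x)=\alpha_{\min}$ almost surely on non-extinction, together with the fact that the set of exponents achieved is closed.
\end{itemize}

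In the first case I would do a second-moment or Paley--Zygmund argument on the normalized sum $b^{n\tau(2)}\sum_I\nu_n(I)^2$. This sum is (up to normalization) the total mass $Y_{0,n}(2,[0,1]^d)$ of the auxiliary cascade generated by $W_2$. That auxiliary cascade is a positive martingale, and it is nondegenerate when the $W_2$-cascade satisfies the Kahane--Peyri\`ere condition \eqref{eq:KP}. Checking this condition amounts to $\frac{d}{dq}(\tau(q)-q\tau'(q))$-type inequalities, i.e.\ $2\tau'(2)>\tau(2)$, which holds exactly when $2<q_{\max}$. Granting this, the martingale converges to a limit that is positive on non-extinction of $\nu$. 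Positivity on the survival event uses a zero-one law: the event that the limit vanishes is inherited by subtrees. The conclusion is that $\sum_I\nu_n(I)^2\ge b^{-n(\tau(2)+\epsilon)}$ eventually. The boundary case $2=q_{\max}$ I would treat by monotone approximation in $q$.

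\emph{Passing to $\nu$.} To relate $\sum\nu(I)^2$ to $\sum\nu_n(I)^2$, I bound $\E(\sum_I\nu_n(I)^2Z_I^2\mid\mathcal F_n)\lesssim\sum\nu_n(I)^2$, which needs $\E Z^2<\infty$ and holds for $2<q_{\max}$. Combining this with Borel--Cantelli gives the upper bound for $\sum\nu(I)^2$. For the lower bound, a Chebyshev/Hoeffding-type argument shows that the sum of the many weighted i.i.d.\ $Z_I^2$ is not much smaller than its mean. For this I would use that $\max_I\nu_n(I)^2/\sum\nu_n(I)^2\to0$ exponentially, which follows from the $S(\infty,1)$ bound and $\alpha_{\min}>\tau(2)/2$. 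Alternatively, the concentration Lemma \ref{lem:conc} could be applied to $-Z_I^2$.

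\emph{Main obstacle.} The main obstacle is the lower bound in the regime $2\ge q_{\max}$, and the positivity of the $W_2$-cascade limit on the non-extinction event. I would first try to handle it via the multifractal fact quoted in the preliminaries, namely that $\alpha_{\min}$ is attained.
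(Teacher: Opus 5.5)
Your argument for $2<q_{\max}$ is correct and is essentially the paper's. The quantity $b^{n\tau(2)}\sum_{I\in\Q_n}\nu_n(I)^2$ is the total mass of the $W_2$-cascade. The Kahane--Peyri\`ere condition for $W_2$ is indeed equivalent to $2\tau'(2)>\tau(2)$, and the limit is positive exactly on non-extinction. Your upper bound via Lemmas \ref{lem:E(S(p,q,j,n))} and \ref{lem:unif_pq} with $(p,q,j)=(2,1,n)$ is also fine, as is the second-moment/Hoeffding transfer to $\nu$ in that regime.

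The gap is the regime $2\ge q_{\max}$, which you flag yourself. There are three problems.

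(i) You never prove $\sum_{I\in\Q_n}\nu_n(I)^2\ge b^{-n(2\alpha_{\min}+\epsilon)}$ for all large $n$, and the existence of the middle limit requires it. Your single-cube argument produces a cube with large $\nu(I)$, not large $\nu_n(I)$, and you cannot go back. Since $\nu(I)=\nu_n(I)Z_I$ and $Z$ in general has only finitely many moments (e.g. for lognormal weights), $\max_{I\in\Q_n}Z_I$ can be a positive power of $b^n$. Moreover, $\inf_x\dim(\nu,x)=\alpha_{\min}$ concerns lower pointwise dimensions, which are $\liminf$s. It therefore yields a heavy cube only along some sequence of scales, and ``the set of exponents achieved is closed'' does not upgrade this to all large $n$. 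A subsequence does suffice for $\dim_2\nu\le 2\alpha_{\min}$, since $\dim_2$ is a $\liminf$.

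(ii) The inequality $\dim_2\nu\ge\widetilde{\tau}(2)$ is left open in this regime. Your transfer needs $\E Z^2<\infty$, which can fail here.

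(iii) At $q_{\max}=2$ the $W_2$-cascade is critical and its mass tends to $0$. ``Monotone approximation in $q$'' then needs information at exponents $q<2$ that you have not set up.

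The missing idea is what the paper does. Run the auxiliary-cascade argument for \emph{every} $q\in(1,q_{\max})$, not only $q=2$, to get $\tau_n(q):=-\frac1n\log\sum_{I\in\Q_n}\nu_n(I)^q\to\tau(q)$ for all such $q$; then conclude as in \cite[Lemma 2.2]{CLS}. Having this whole range of $q$ is what lets you reach $q=2$. Since $q\mapsto\log\sum_I\nu_n(I)^q$ is convex, for $1<q_1<q<q_{\max}\le 2$ we have
\[
\tau_n(2)\le\tau_n(q)+(2-q)\,\frac{\tau_n(q)-\tau_n(q_1)}{q-q_1}\,.
\]
Let $n\to\infty$, then $q_1\uparrow q$, then $q\uparrow q_{\max}$. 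This gives $\limsup_n\tau_n(2)\le\tau(q_{\max})+(2-q_{\max})\tau'(q_{\max})=2\alpha_{\min}=\widetilde{\tau}(2)$, including the case $q_{\max}=2$.

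For $\nu$ itself, replace second moments by $(\sum_I\nu(I)^2)^{1/2}\le(\sum_I\nu(I)^q)^{1/q}$ with $q<q_{\max}$. Then use $\E\sum_{I\in\Q_n}\nu(I)^q=b^{-n\tau(q)}\E Z^q<\infty$ (valid because $\tau(q)>0$ there), Borel--Cantelli, and $\tau(q)/q\to\alpha_{\min}$.
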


\begin{proof}
For all $q>1$, we define auxiliary random variables
\begin{equation*}
    W_q=\frac{b^d(W_j^q)_{j\in\Lambda}}{\sum_{j\in\Lambda}\E(W_j^q)}\,.
\end{equation*}
Note that each $W_q$ satisfies \eqref{eq:cascade_gen}. Denote
\[Y_n(q):=b^{ndq}\left(\sum_{j\in\Lambda}\E(W_j^q)\right)^{-n}\sum_{I\in\Q_n}\nu_n(I)^q\,.\]
Then $Y_n$ is the total measure at generation $n$ of the cascade corresponding to $W_q$. If $W_q$ is subcritical, $Y_n(q)$ converges to a nonzero limit almost surely, on non-extinction, and whence
\begin{equation}\label{eq:dim_qn}
\lim_{n\to\infty}\frac{\log\sum_{I\in\Q_n}\nu_n(I)^q}{-n}=\tau(q)\,.   
\end{equation}
After this, the proof proceeds verbatim with \cite[Lemma 2.2]{CLS}.
\end{proof}

The heart of the proof of Theorem \ref{thm:main} is the following application of Lemma \ref{lem:conc}. 

\begin{lemma}\label{main_probab_lemma}
    Let $\beta<\widetilde{\tau}(2)$. Then, almost surely, 
    \begin{equation}\label{eq:small_freq} 
|\widehat{\nu_{n+1}}(\xi)-\widehat{\nu_n}(\xi)|\lesssim_\beta b^{-n\beta/2}
\end{equation}
holds for all $n\in\N$ and all $\xi\in\R^d$, $|\xi|_\infty\le b^{n+1}$.  Here, the implicit constant is random, but independent of $n$ and $\xi$.
\end{lemma}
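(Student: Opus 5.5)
The plan is to follow the scheme of Lemma \ref{lem:conc_app}, with the Euclidean (not curvilinear) geometry and with no frequency-dependent weighting. First I will write the increment as a weighted sum of independent centred variables. For $\iii\in\Lambda^n$ set $a_\iii=\nu_n(Q_\iii)$ and
\[X_{\iii,\xi}=b^{nd}\sum_{j\in\Lambda}((W_\iii)_j-1)\int_{Q_{\iii,j}}\exp(-2\pi i x\cdot\xi)\,dx\,,\]
so that $\widehat{\nu_{n+1}}(\xi)-\widehat{\nu_n}(\xi)=\sum_{\iii\in\Lambda^n}a_\iii X_{\iii,\xi}$. Conditional on $\mathcal{F}_n$ (the $\sigma$-algebra generated by the first $n$ generations), the $X_{\iii,\xi}$ are independent with mean zero, because $\E(W_i)=1$. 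Since $|b^{nd}\int_{Q_{\iii,j}}e^{-2\pi i x\cdot\xi}dx|\le b^{-d}$, \eqref{eq:moments} gives $\PP(\pm\Re X_{\iii,\xi}>t),\PP(\pm\Im X_{\iii,\xi}>t)\lesssim_p t^{-p}$ for every $p$. I will treat real and imaginary parts, and $\pm$ signs, separately, which costs only a factor $4$.

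Next I will apply Lemma \ref{lem:conc} conditionally on $\mathcal{F}_n$, with $N=b^{nd}$, $q=1$, $S_n=\sum_{\iii}\nu_n(Q_\iii)^2$, $t=b^{2n\delta}S_n^{1/2}$ and $\lambda=b^{n\delta}/t$. The quadratic term is then $O(\lambda^2)S_n=O(b^{-2n\delta})$, so the exponential term is at most $\exp(-b^{n\delta}+O(1))$.

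The main obstacle is choosing $M$ so that \eqref{eq:lambda} holds. I will define $M$ by $\log M/M=\lambda\max_\iii a_\iii=b^{-n\delta}\max_\iii\nu_n(Q_\iii)/S_n^{1/2}\le b^{-n\delta}$. This gives $M\gtrsim b^{n\delta}$, up to a logarithm, so $N\Phi(M)\lesssim b^{nd-pn\delta/2}$. Taking $p$ large relative to $d/\delta$, the failure probability for a fixed $\xi$ is $\lesssim b^{-Cn}$ with $C$ as large as needed.

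The bound then has to be extended to all frequencies. I will take a union bound over the grid $\xi\in b^{-n}\Z^d$ with $|\xi|_\infty\le b^{n+1}$, which has $O(b^{2nd+d})$ points. Summing over $n$ and applying Borel--Cantelli shows that, almost surely, $|\widehat{\nu_{n+1}}(\xi)-\widehat{\nu_n}(\xi)|\lesssim b^{2n\delta}S_n^{1/2}$ for all grid points and all large $n$. For general $\xi$ I will use that $\widehat{\nu_n}$ and $\widehat{\nu_{n+1}}$ are Lipschitz with constant $\lesssim\sup_m\nu_m([0,1]^d)<\infty$ almost surely, since $\nu_m([0,1]^d)$ is a nonnegative martingale. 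Passing to the nearest grid point therefore costs an additive $O(b^{-n})$.

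Finally, Lemma \ref{lem:unif_pq} with $p=q=2$, $j=n$ (equivalently Lemma \ref{lem:dim_2}) gives $S_n\le b^{-n\widetilde{\tau}(2)+n\varepsilon}$ for large $n$. Choosing $4\delta+\varepsilon<\widetilde{\tau}(2)-\beta$ yields $b^{2n\delta}S_n^{1/2}\lesssim b^{-n\beta/2}$. The additive term is harmless because $b^{-n}\le b^{-n\beta/2}$, as $\beta<\widetilde{\tau}(2)\le 2$. The finitely many small $n$ are absorbed into the random constant, which proves \eqref{eq:small_freq}.
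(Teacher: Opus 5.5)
Your argument follows the paper's proof step by step: the same decomposition into $\nu_n(Q_\iii)X_{\iii,\xi}$, the same application of Lemma \ref{lem:conc} with $q=1$, $t=b^{2n\delta}S_n^{1/2}$ and $\lambda=b^{n\delta}/t$, a union bound over a grid, Borel--Cantelli, Lipschitz extension, and finally the bound $S_n\le b^{-n\widetilde{\tau}(2)+n\varepsilon}$. There is, however, one false claim in the last step. To absorb the Lipschitz error $O(b^{-n})$ coming from the grid $b^{-n}\Z^d$, you use $\beta<\widetilde{\tau}(2)\le 2$.

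The inequality $\widetilde{\tau}(2)\le 2$ fails in dimension $d\ge 3$; in general one only has $\widetilde{\tau}(2)\le d$. For example, $\tau(2)=2d-\log\sum_{i\in\Lambda}\E(W_i^2)$ is close to $d$ when the weights are nearly deterministic. In that case $q_{\max}$ is large and $\widetilde{\tau}(2)=\tau(2)$. So for $2\le\beta<\widetilde{\tau}(2)$ the additive term $b^{-n}$ is much larger than $b^{-n\beta/2}$, and your argument does not yield \eqref{eq:small_freq} in that range.

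The fix is what the paper does: use the finer grid $b^{-nd/2}\Z^d$ (any spacing $b^{-Kn}$ with $K\ge d/2$ works). The Lipschitz error becomes $O(b^{-nd/2})\lesssim b^{-n\beta/2}$, since $\beta<\widetilde{\tau}(2)\le d$. The number of grid points in $\{|\xi|_\infty\le b^{n+1}\}$ only grows to $O(b^{n(d^2/2+d)})$. This is absorbed by taking the tail exponent $p$ correspondingly larger, e.g. $p>(d^2/2+2d)/\delta$, using $M\ge b^{n\delta}$; the superpolynomial tails \eqref{eq:moments} allow any $p$.

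With that change your proof is complete. As written, it is already correct for $d\le 2$, which includes the curvilinear Lemma \ref{lem:small_f}. It also covers the range $\beta<\min\{2,\widetilde{\tau}(2)\}$, which is the one actually used in the proof of Theorem \ref{thm:main}.
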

\begin{proof} 
Let $\delta>0$. Given $\xi$ and $n$, we may write
$$
\widehat{\nu_{n+1}}(\xi)-\widehat{\nu_{n}}(\xi)=\sum_{\iii\in\Lambda^n} \nu_n(Q_\iii)X_{\iii,\xi}\,,
$$
where
\begin{align*}
X_{\iii,\xi}=b^{nd}\sum_{j\in\Lambda}\left((W_\iii)_j-1\right)\int_{Q_{\iii,j}}\exp(-2\pi i x\cdot\xi)\,dx\,,
\end{align*}
and $W_\iii$ is an independent copy of $W$ and $Q_{\iii,j} = x_{(\iii, j)} + [0,b^{-(n+1)})^d$ with $(\iii,j) = (i_1, i_2, \cdots, i_n, j) $ for each $\iii \in \Lambda^n$ and $j \in \Lambda$. We note that, conditional on $\mathcal{F}_n$ (the sigma-algebra generated by $W_\iii$, $\iii\in\cup_{k=0}^n\Lambda^k$), the random variables $X_{\iii,\xi}$ are independent, have zero mean, and satisfy 
\[\PP\left(|re X_{\iii,\xi}|>t\right),\PP\left(|im X_{\iii,\xi}|>t\right)\le\Phi(t):=\PP\left(\max_{j\in\Lambda} W_j>t+1\right)\,.\]
Note that \eqref{eq:moments} implies $\Phi(t) \lesssim_p t^{-p}$ for all $0<p<\infty$. 

For each $n \in \N$, denote $S_n = \sum_{Q_\iii \in \Q_n} \nu_n^2 (Q_\iii)$. We may then use the Lemma \ref{lem:conc} for the real and imaginary parts of $X_{\iii,\xi}$ with the choice $N=b^{nd}$, $q=1$, $t=b^{2n\delta} S_n^{1/2} $ and $\lambda=t^{-1} b^{n\delta}$. Then, the lemma yields
\begin{align*}
   \PP\left(\left|\widehat{\nu_{n+1}}(\xi)-\widehat{\nu_{n}}(\xi)\right|> b^{2n\delta} S_n^{1/2} \right)=2b^{nd}\Phi(M)+2\exp(-b^{n\delta }+O(b^{-2n \delta}))\,,
\end{align*}
where
\[\frac{M}{\log M}=\frac{1}{\lambda\max_{\iii\in\Lambda^n}\nu_n(Q)}\ge b^{n\delta}\]
and whence we choose $M\ge b^{n\delta}$. Combining with a union bound,
\begin{equation*}
    \begin{split}
    &\PP\left(\left|\widehat{\nu_{n+1}}(\xi)-\widehat{\nu_{n}}(\xi)\right|>b^{2n\delta} S_n^{1/2} \text{ for some }\xi\in b^{-nd/2}\Z^d\,,\, |\xi|_\infty \le b^{n+1}\right)\\
    &\lesssim b^{n(d^2/2+d)}\left(b^{nd} \Phi(b^{n\delta})+\exp(-b^{n\delta })\right).
    \end{split}
\end{equation*}
Using that $\Phi(t)\lesssim t^{-p}$ for all $p$ and choosing $p>(d^2/2+2d)/\delta$, we have
\begin{equation*}
    \PP\left(\left|\widehat{\nu_{n+1}}(\xi)-\widehat{\nu_{n}}(\xi)\right|>b^{2n\delta} S_n^{1/2} \text{ for some }\xi\in b^{-nd/2}\Z^d\,,\, |\xi|_\infty\le b^{n+1}\right)\lesssim_\delta b^{-nc}\,,
\end{equation*}
for some $c=c_\delta>0$.

Applying the Borel-Cantelli lemma, it follows that, almost surely,
\begin{equation*}
    |\widehat{\nu_{n+1}}(\xi)-\widehat{\nu_{n}}(\xi)|\le b^{2n\delta} S_n^{1/2}
\end{equation*}
for all $\xi\in b^{-nd/2}\Z^d$, $|\xi|_\infty \leq b^{n+1}$ and for all except finitely many $n$. 

Since $\nu_n([0,1]^d)$ is a martingale with $\E(\nu_n([0,1]^d)) =1$, we have $\sup_n \nu_n ( [0,1]^d)< \infty$ almost surely. Therefore, $\widehat{\nu_n}(\xi)$ is Lipschitz with a Lipschitz constant independent of $n$. Thus, we observe that, almost surely,
$$
    |\widehat{\nu_{n+1}}(\xi)-\widehat{\nu_{n}}(\xi)|\lesssim b^{2n\delta} S_n^{1/2} +b^{-nd/2}
$$
for all $|\xi|_\infty \leq b^{n+1}$, and for all except finitely many $n$.

Combining with Lemma \ref{lem:dim_2} and the fact that $\widetilde{\tau}(2)\leq d$, this completes the proof.
\end{proof}

\begin{remark}\label{rem:appendix}
We note that the above proof works, verbatim, for the curvilinear cascades $\mu_n$. In this case, $d=1$ and the integrals in the definition of $X_{\iii,j}$ are over $\gamma(Q_{\iii,j})$. Thus, we have also verified the Lemma \ref{lem:small_f}. 
\end{remark}

\begin{proof}[Proof of Theorem \ref{thm:main}]
Once we have Lemma \ref{main_probab_lemma} at our disposal, the proof is identical to the proof of \cite[Theorem 3.1]{CLS} (see also the proof of \cite[Theorem 14.1]{SS}). We provide the details for the convenience of the reader. Let $\beta<\min\{2,\widetilde{\tau}(2)\}$. Almost surely, there is a finite implicit constant such that the claim of Lemma \ref{main_probab_lemma} holds. Conditional on this, we compute as follows (once $\beta$ and the implicit $O(1)$-constant of Lemma \ref{main_probab_lemma} are fixed, the proof does not contain any probabilistic elements):

If $|\xi|\le b^{n+1}$, we may use \eqref{eq:small_freq}.

If $|\xi| >b^{n+1}$, we write $\xi=b^{n+1}q+\xi'$ where $q\in\Z^d$ and $|\xi'|<b^{n+1}$. Using change of variables in each coordinate (this is applied to compute $\widehat{\textbf{1}_{Q_\iii}}(\xi)$ for $\iii\in\Lambda^\iii$) gives the identity
\begin{align*}
    \widehat{\nu_{n+1}}(\xi)-\widehat{\nu_n}(\xi)=\left(\widehat{\nu_{n+1}}(\xi')-\widehat{\nu_n}(\xi')\right)\prod_{1\le j\le d\,,\,\xi_j\neq 0}\frac{|\xi'_j|}{|\xi_j|}\,.
\end{align*}
Each term in the above product is $\le 1$ and the smallest term is $\lesssim b^{n+1}/|\xi|_\infty$. Combining this with \eqref{eq:small_freq} (applied to $\xi'$), this gives
\begin{equation}\label{eq:large_freq}
|\widehat{\nu_{n+1}}(\xi)-\widehat{\nu_{n}}(\xi)|\le  \frac{b^{n+1}}{|\xi|_\infty}  \left|\widehat{\nu_{n+1}}(\xi')-\widehat{\nu_{n}}(\xi')\right|\le\frac{b^{n(1-\beta/2)}}{|\xi|_\infty} \,.
\end{equation}

We may now finish the proof as follows. If $\xi\in \R^d$, $|\xi|_\infty>b$, let $n_\xi\in\mathbb{N}$ so that $b^{n_\xi}<|\xi|_\infty\le b^{n_\xi+1}$. Using \eqref{eq:large_freq} for $n\le n_\xi$ and \eqref{eq:small_freq} for $n>n_\xi$ and telescoping, we have
\begin{align*}
|\widehat{\nu_{m}}(\xi)-\widehat{\nu_0}(\xi)|&\le\sum_{n=0}^{n_\xi}|\widehat{\nu_{n+1}}(\xi)-\widehat{\nu_n}(\xi)|+\sum_{n=n_\xi+1}^{m-1}|\widehat{\nu_{n+1}}(\xi)-\widehat{\nu_n}(\xi)|\\
&\lesssim |\xi|^{-1}\sum_{n=0}^{n_\xi}b^{n(1-\beta/2)}+\sum_{n=n_\xi+1}^{m-1}b^{-n\beta/2}\\
&\lesssim |\xi|^{-\beta/2}\,,
\end{align*}
for all $m\ge n_\xi$.
Since also $\widehat{\nu_0}(\xi)\lesssim |\xi|^{-1} \lesssim_\beta |\xi|^{-\beta/2}$, we get
\[
\widehat{\nu}(\xi)=\lim_{m\rightarrow \infty} \widehat{\nu_m}(\xi)\lesssim_\beta |\xi|^{-\beta/2} + |\xi|^{-1} \lesssim |\xi|^{-\beta/2}\,,
\]
where the implicit constant is independent of $\xi$. Since $\beta < \min \{2, \widetilde{\tau}(2)\} $ is arbitrary, together with Lemma \ref{lem:dim_2}, this implies that $\dim_F \nu \geq \min\{2, \dim_2 \nu\}$ almost surely on non-extinction.
\end{proof}

\bibliographystyle{abbrv}
\bibliography{ref}

\begin{thebibliography}{10}

\bibitem{AK}
S.~Armstrong and T.~Kuusi.
\newblock Renormalization group and elliptic homogenization in high contrast.
\newblock {\em Invent. Math.}, 242(3):895--1086, 2025.

\bibitem{Barral2000}
J.~Barral.
\newblock Continuity of the multifractal spectrum of a random statistically self-similar measure.
\newblock {\em J. Theoret. Probab.}, 13(4):1027--1060, 2000.

\bibitem{CLS}
C.~Chen, B.~Li, and V.~Suomala.
\newblock Fourier dimension of {M}andelbrot multiplicative cascades.
\newblock {\em Comm. Math. Phys.}, 406(8):Paper No. 182, 15, 2025.

\bibitem{CHQW}
X.~Chen, Y.~Han, Y.~Qiu, and Z.~Wang.
\newblock Harmonic analysis of {M}andelbrot cascades--in the context of vector-valued martingales.
\newblock Preprint, available at arXiv:2409.13164.

\bibitem{FalconerJin2019}
K.~Falconer and X.~Jin.
\newblock Exact dimensionality and projection properties of {G}aussian multiplicative chaos measures.
\newblock {\em Trans. Amer. Math. Soc.}, 372(4):2921--2957, 2019.

\bibitem{GarbanVargas}
C.~Garban and V.~Vargas.
\newblock Harmonic analysis of {G}aussian multiplicative chaos on the circle.
\newblock {\em Probab. Theory Related Fields}, 2026.

\bibitem{Heur}
Y.~Heurteaux.
\newblock An introduction to {M}andelbrot cascades.
\newblock In {\em New trends in applied harmonic analysis}, Appl. Numer. Harmon. Anal., pages 67--105. Birkh\"auser/Springer, Cham, 2016.

\bibitem{HuntKaloshin}
B.~R. Hunt and V.~Y. Kaloshin.
\newblock How projections affect the dimension spectrum of fractal measures.
\newblock {\em Nonlinearity}, 10(5):1031--1046, 1997.

\bibitem{Kahane1993}
J.-P. Kahane.
\newblock Fractals and random measures.
\newblock {\em Bull. Sci. Math.}, 117(1):153--159, 1993.

\bibitem{KahanePeyriere1976}
J.-P. Kahane and J.~Peyri\`ere.
\newblock Sur certaines martingales de {B}enoit {M}andelbrot.
\newblock {\em Advances in Math.}, 22(2):131--145, 1976.

\bibitem{Kaufman1981}
R.~Kaufman.
\newblock On the theorem of {J}arn\'ik and {B}esicovitch.
\newblock {\em Acta Arith.}, 39(3):265--267, 1981.

\bibitem{LinQiuTan2024}
Z.~Lin, Y.~Qiu, and M.~Tan.
\newblock Harmonic analysis of multiplicative chaos {P}art {I}: the proof of {G}arban-{V}argas conjecture for 1{D} {GMC}.
\newblock Preprint, available at arXiv:2411:13923.

\bibitem{LinQiuTan2025}
Z.~Lin, Y.~Qiu, and M.~Tan.
\newblock Harmonic analysis of multiplicative chaos {P}art {II}: a unified approach to {F}ourier dimensions.
\newblock Preprint, available at arXiv:2505.03298.

\bibitem{Lyons1995}
R.~Lyons.
\newblock Seventy years of {R}ajchman measures.
\newblock In {\em Proceedings of the {C}onference in {H}onor of {J}ean-{P}ierre {K}ahane ({O}rsay, 1993)}, pages 363--377, 1995.

\bibitem{Mandelbrot1974}
B.~Mandelbrot.
\newblock Intermittent turbulence in self similar cascades: divergence of high moments and dimension of carrier.
\newblock {\em J. Fluid Mech.}, 62:331--333, 1974.

\bibitem{Mandelbrot1976}
B.~Mandelbrot.
\newblock Intermittent turbulence and fractal dimension: kurtosis and the spectral exponent {$5/3+B$}.
\newblock In {\em Turbulence and {N}avier-{S}tokes equations ({P}roc. {C}onf., {U}niv. {P}aris-{S}ud, {O}rsay, 1975)}, volume Vol. 565 of {\em Lecture Notes in Math.}, pages 121--145. Springer, Berlin-New York, 1976.

\bibitem{Mattila2015}
P.~Mattila.
\newblock {\em Fourier analysis and {H}ausdorff dimension}, volume 150 of {\em Cambridge Studies in Advanced Mathematics}.
\newblock Cambridge University Press, Cambridge, 2015.

\bibitem{Molchan1996}
G.~M. Molchan.
\newblock Scaling exponents and multifractal dimensions for independent random cascades.
\newblock {\em Comm. Math. Phys.}, 179(3):681--702, 1996.

\bibitem{Ryou}
D.~Ryou.
\newblock Near-optimal restriction estimates for {C}antor sets on the parabola.
\newblock {\em Int. Math. Res. Not. IMRN}, (6):5050--5099, 2024.

\bibitem{SS}
P.~Shmerkin and V.~Suomala.
\newblock Spatially independent martingales, intersections, and applications.
\newblock {\em Mem. Amer. Math. Soc.}, 251(1195):v+102, 2018.

\end{thebibliography}

\end{document}